\documentclass[12pt, reqno, a4paper]{amsart}
\usepackage{enumerate, amssymb}
\usepackage{amscd}
\usepackage{eufrak}

\addtolength{\textwidth}{1.1in}
\addtolength{\oddsidemargin}{-0.6in}
\addtolength{\evensidemargin}{-0.6in}
\addtolength{\textheight}{0.3in} \addtolength{\topmargin}{-.3in}
\addtolength{\headheight}{3pt} \textheight25cm
\parindent=8mm

\newtheorem{theorem}{Theorem}[section]
\newtheorem{proposition}[theorem]{Proposition}
\newtheorem{lemma}[theorem]{Lemma}
\theoremstyle{definition}
\newtheorem{definition}[theorem]{Definition}

\newtheorem{corollary}[theorem]{Corollary}
\newtheorem{remark}[theorem]{Remark}

\numberwithin{equation}{section}
\def\sqr#1#2{{\,\vcenter{\vbox{\hrule height.#2pt\hbox{\vrule width.#2pt
height#1pt \kern#1pt\vrule width.#2pt}\hrule height.#2pt}}\,}}

\begin{document}
\title[Positively $p$-nuclear operators, positively $p$-integral operators and approximation properties]{Positively $p$-nuclear operators, positively $p$-integral operators and approximation properties}
\author{Dongyang Chen*}
\address{School of Mathematical Sciences, Xiamen University, Xiamen, 361005, China}
\email{cdy@xmu.edu.cn}
\author{Amar Belacel}
\address{Laboratory of Pure and Applied Mathematics (LMPA), University of Laghouat, Laghouat, Algeria}
\email{a.belacel@lagh-univ.dz}
\author{Javier Alejandro Ch\'{a}vez-Dom\'{i}nguez}
\address{Department of Mathematics, University of Oklahoma, Norman, Oklahoma, 73019, USA}
\email{jachavezd@math.ou.edu}
\thanks{*Corresponding author \\
Dongyang Chen was supported by the National Natural Science Foundation of China (Grant No. 11971403) and the Natural Science Foundation of Fujian Province of China (Grant No. 2019J01024).}
\subjclass[2010]{Primary 47B10, 46B28, 46B42, 46B45.}
\keywords{latticially $p$-nuclear operators; positively $p$-nuclear operators; latticially $p$-integral operators; positively $p$-integral operators; approximation properties.}

\begin{abstract}
In the present paper, we introduce and investigate a new class of positively $p$-nuclear operators that are positive analogues of right $p$-nuclear operators.
One of our main results establishes an identification of the dual space of positively $p$-nuclear operators with the class of positive $p$-majorizing operators that is a dual notion of positive $p$-summing operators.
As applications, we prove the duality relationships between latticially $p$-nuclear operators introduced by O. I. Zhukova and positively $p$-nuclear operators. We also introduce a new concept of positively $p$-integral operators via positively $p$-nuclear operators and prove that the inclusion map from $L_{p^{*}}(\mu)$ to $L_{1}(\mu)$($\mu$ finite) is positively $p$-integral.
New characterizations of latticially $p$-integral operators by O. I. Zhukova and positively $p$-integral operators are presented and used to prove that an operator is latticially $p$-integral (resp. positively $p$-integral) precisely when its second adjoint is.
Finally, we describe the space of positively $p^{*}$-integral operators as the dual of the $\|\cdot\|_{\Upsilon_{p}}$-closure of the subspace of finite rank operators in the space of positive $p$-majorizing operators. Approximation properties, even positive approximation properties, are needed in establishing main identifications.
\end{abstract}

\maketitle

\section{Introduction}

Introduced first by A. Grothendieck in \cite{G2}, the theory of $p$-summing operators was exhaustively studied by A. Pietsch \cite{Pi} and J. Lindenstrauss and A. Pe{\l}czy\'{n}ski \cite{LP}. In 1955, A. Grothendieck \cite{Gro} introduced and studied nuclear and integral operators that are central to his theory of tensor products. A. Persson and A. Pietsch \cite{PP} introduced and investigated $p$-nuclear and $p$-integral operators that are natural generalizations to arbitrary $1\leq p\leq \infty$ of the classes of nuclear operators and integral operators. The classes of $p$-summing, $p$-nuclear and $p$-integral operators have extreme utility in the study of many different problems in Banach space theory. We recommend \cite{DJT} and \cite{P} for a complete study of the topics. So it is natural to generalize these three classes of operator to various settings. In 1998, the generalization of the theory of $p$-summing operators to the noncommutative setting was first developed by G. Pisier \cite{Pis} by means of the so called \textit{completely $p$-summing maps}. Successively, the classes of nuclear operators, integral operators and other ideals of operators were generalized to the noncommutative setting ([12,19] etc.). In 2009, J. Farmer and W. B. Johnson started in \cite{FJ09} studying the $p$-summing operators in the nonlinear setting, which they called \textit{Lipschitz $p$-summing operators}. The paper \cite{FJ09} has motivated the study of various classes of classical operator ideals in the nonlinear setting (see, for instance, \cite{JMS},
\cite{Dom11}, \cite{CZ12}, \cite{BC}, etc).

By comparison to the noncommutative setting and the nonlinear setting, it seems that the theory of $p$-summing, $p$-nuclear and $p$-integral operators in the Banach lattice setting attracts much less attention.
In 1971, U. Schlotterbeck \cite{Schl} (see also \cite{Sch}) characterized abstract $M$-spaces ($AM$-spaces for short) and abstract $L$-spaces ($AL$-spaces) in a way quite different from the classical Kakutani's representation theorems for $AM$-spaces with a unit and $AL$-spaces: A Banach lattice $X$ is isometric lattice isomorphic to an $AL$-space ($AM$-space, respectively) if and only if every positive unconditionally summable sequence in $X$ is absolutely summable (every norm null sequence in $X$ is order bounded), that is, the identity map $I_{X}$ on $X$ takes positive unconditionally summable sequences to absolutely summable sequences ($I_{X}$ takes norm null sequences to order bounded sequences). In 1972, H. H. Schaefer \cite{Sch1} generalized this property of the identity map on $AL$-spaces ($AM$-spaces, respectively) in a natural way and introduced the concept of the so called \textit{cone absolutely summing operators} (\textit{majorizing operators}, respectively). Furthermore, H. H. Schaefer \cite{Sch1} characterized cone absolutely summing operators (majorizing operators, respectively) by factoring positively through $AL$-spaces ($AM$-spaces, respectively). On the other hand, by introducing the $l$-norm on the class of all cone absolutely summing operators (the $m$-norm on the class of all majorizing operators), H. H. Schaefer \cite{Sch1} extended Schlotterbeck's characterizations of $AL$-spaces ($AM$-spaces, respectively).
In 1971, L. Krsteva in \cite{Kr} written in Russian (see also \cite{GC}) extended cone absolutely summing operators to the so-called \textit{latticially $p$-summing operators}.
Being unaware of \cite{Kr} and \cite{GC}, O. Blasco ([3,2]) introduced the notion of positive $p$-summing operators, which is exactly the same as latticially $p$-summing operators.
Having latticially $p$-summing operators at hand, it is natural to think about $p$-nuclear and $p$-integral operators in the Banach lattice setting.
In 1998, O. I. Zhukova \cite{Zhu} defined and investigated a partially positive version of $p$-nuclear operators-\textit{latticially $p$-nuclear operators}. By using of latticially $p$-nuclear operators, O. I. Zhukova \cite{Zhu} naturally introduced the notion of latticially $p$-integral operators and proved
some of well-known results analogous to the classical theory of $p$-summing, $p$-nuclear and $p$-integral operators.

This paper is a continuous work of \cite{CBC}.
The aim of the present paper is to develop the theory of $p$-nuclear and $p$-integral operators in the Banach lattice setting. The paper is organized as follows.

It was known \cite[Theorem 18.2.5]{P} that the adjoint operator ideal $[\mathcal{N}_{p},\nu_{p}]^{*}$ of $[\mathcal{N}_{p},\nu_{p}]$ is equal to $[\prod_{p^{*}},\pi_{p^{*}}]$. This formula described the dual space $(\mathcal{N}_{p}(E,F))^{*}$ as the space $\prod_{p^{*}}(F,E^{**})$ if $E^{*}$ and $F$ have the metric approximation property. O. I. Zhukova \cite{Zhu} established an analogous representation theorem for $(\widetilde{\mathcal{N}}_{p}(E,X))^{*}$, the dual space of the latticially $p$-nuclear operators, in terms of latticially $p$-summing operators if $E^{*}$ has the metric approximation property or $X$ has the positive metric approximation property. In Section 2, we introduce the notion of positively $p$-nuclear operators that is a partially positive version of right $p$-nuclear operators (\cite{Per},\cite[Sec.6.2]{R}). Firstly, we show that the class of positively $p$-nuclear operators does not coincide with the class of right $p$-nuclear operators. Secondly, we establish a representation theorem for $(\widetilde{\mathcal{N}}^{p}(X,E))^{*}$, the dual space of the positively $p$-nuclear operators, by means of positive $p$-majorizing operators introduced by D. Chen, A. Belacel and J. A. Ch\'{a}vez-Dom\'{i}nguez \cite{CBC} if $E$ has the approximation property or $X^{*}$ has the positive metric approximation property. Recall that when $E^{*}$ has the approximation property, any operator $T:E\rightarrow F$ with nuclear adjoint is nuclear and both nuclear norms coincide (see for instance \cite[Proposition 4.10]{R}). The analogous result for $p$-nuclear operators due to O. I. Reinov \cite[Theorem 1]{Rei} states that when $E^{*}$ or $F^{***}$ has the approximation property, then an operator $T:E \rightarrow F$ with $p$-nuclear adjoint is right $p$-nuclear and the $p$-nuclear norm of $T^{*}$ and the right $p$-nuclear norm of $T$ coincide.
As a corollary of our representation theorem for $(\widetilde{\mathcal{N}}^{p}(X,E))^{*}$, we prove that when $E^{***}$ has the approximation property or $X^{*}$ has the positive metric approximation property, then an operator $T:X\rightarrow E$ with a latticially $p$-nuclear adjoint is positively $p$-nuclear and the latticially $p$-nuclear norm of $T^{*}$ and the positively $p$-nuclear norm of $T$ coincide. Furthermore, we use O. I. Zhukova's representation theorem for $(\widetilde{\mathcal{N}}_{p}(E,X))^{*}$ to prove that when $E^{*}$ has the approximation property or $X^{****}$ has the positive metric approximation property, then an operator $S:E\rightarrow X$ with a positively $p$-nuclear adjoint is latticially $p$-nuclear and the positively $p$-nuclear norm of $S^{*}$ and the latticially $p$-nuclear norm of $S$ coincide. Finally, we use our representation theorem for $(\widetilde{\mathcal{N}}^{p}(X,E))^{*}$ to
describe the space of positive $p$-majorizing operators via positively $p$-nuclear operators and nuclear operators.

The operator ideal of $p$-integral operators is defined to be the maximal hull of the ideal of $p$-nuclear operators (\cite{P}). Following A. Defant and K. Floret \cite{DF}, the maximal hull of a Banach operator ideal is defined by finite dimensional subspaces and finite co-dimensional subspaces. It should be mentioned that the maximal hull can be restated by finite rank operators (see \cite[Theorem 8.7.4]{P}). Based on this restatement, O. I. Zhukova \cite{Zhu} defined the class of latticially $p$-integral operators to be the left positive maximal hull of the class of latticially $p$-nuclear operators. In Section 3, we define the class of positively $p$-integral operators to be the right positive maximal hull of the class of positively $p$-nuclear operators. Relating to order completeness, we show that positively $p$-integral operators can be characterized by finite dimensional sublattices and finite co-dimensional subspaces. But, when relating to positive metric approximation property, we characterize positively $p$-integral operators only by finite co-dimensional subspaces and latticially $p$-integral operators only by finite dimensional subspaces. As applications, we establish the duality relationships between latticially $p$-integral operators and positively $p$-integral operators. Consequently, we prove that an operator $S:E\rightarrow X$ is latticially $p$-integral precisely when $S^{**}$ is if $X^{**}$ has the positive metric approximation property (resp. an operator $T:X\rightarrow E$ is positively $p$-integral precisely when $T^{**}$ is if $X^{*}$ has the positive metric approximation property). O. I. Zhukova \cite{Zhu} proved that the class of latticially $p$-nuclear operators from $E$ to $X$ can be embedded isometrically into the class of latticially $p$-integral operators from $E$ to $X$ whenever $E^{*}$ has the metric approximation property and $X$ has the positive metric approximation property. Analogously, we prove that the class of positively $p$-nuclear operators from $X$ to $E$ can be embedded isometrically into the class of positively $p$-integral operators from $X$ to $E$ if $X^{*}$ has the positive metric approximation property and $E$ has the metric approximation property. \cite[Theorem 19.2.13]{P} stated that the adjoint operator ideal $[\prod_{p},\pi_{p}]^{*}$ of $[\prod_{p},\pi_{p}]$ is $[\mathcal{I}_{p^{*}},i_{p^{*}}]$. This formula described $\mathcal{I}_{p^{*}}(F,E^{**})$ as the dual of the $\pi_{p}$-closure of $\mathcal{F}(E,F)$ in $\prod_{p}(E,F)$ when $E^{*}$ and $F$ has the metric approximation property. Analogously, O. I. Zhukova \cite{Zhu} described $\widetilde{\mathcal{I}}_{p^{*}}(E,X^{**})$, the space of latticially $p^{*}$-integral operators from $E$ to $X^{**}$, as the dual of the $\|\cdot\|_{\Lambda_{p}}$-closure of $\mathcal{F}(X,E)$ in the space of latticially $p$-summing operators if $E^{*}$ has the metric approximation property and $X^{**}$ has the positive metric approximation property. In this section, we describe $\widetilde{\mathcal{I}}^{p^{*}}(X,E^{**})$, the space of positively $p^{*}$-integral operators from $X$ to $E^{**}$, as the dual of the $\|\cdot\|_{\Upsilon_{p}}$-closure of $\mathcal{F}(E,X)$ in the space of positive $p$-majorizing operators if $E^{**}$ has the metric approximation property, $X^{*}$ has the positive metric approximation property and $X$ is order continuous.

\noindent {\bf Notation and Preliminary.} Our notation and terminology are standard as may be found in [28,10,23]. Throughout the paper, $X,Y,Z$ will always denote real Banach lattices, whereas $E,F,G$ will denote real Banach spaces. By an operator, we always mean a bounded linear operator.
For a Banach lattice $X$, we denote by $X_{+}$ the positive cone of $X$, i.e., $X_{+}:=\{x\in X: x\geq 0\}$.
We write $LDim(X)$ for the collection of all finite dimensional sublattices of $X$.
If $M$ is a closed subspace of $E$, we denote by $i_{M}$ the canonical inclusion from $M$ into $E$ and by $Q_{M}$ the natural quotient map from $E$ onto $E/M$.
We let $M^{\perp}:=\{u^{*}\in E^{*}: \langle u^{*},u\rangle=0$ for all $u\in M\}$.
We write $FIN(E)$ for the collection of all finite-dimensional subspaces of $E$ and $COFIN(E)$ for the collection of all finite co-dimensional subspaces of $E$. An operator $T:X\rightarrow Y$ which preserves the lattice operations
is called \textit{lattice homomorphism}, that is, $T(x_{1}\vee x_{2})=Tx_{1}\vee Tx_{2}$ for all $x_{1},x_{2}\in X$.
An one-to-one, surjective lattice homomorphism is called \textit{lattice isomorphism}.
As customary, $B_{E}$ denotes the closed unit ball of $E$, $E^{*}$ its linear dual and $I_{E}$ the identity map on $E$. We denote by $\mathcal{L}(E,F)$ (resp. $\mathcal{F}(E,F)$) the space of all operators (resp. finite rank operators) from $E$ to $F$.
The classes of $p$-summing, $p$-nuclear and $p$-integral operators are denoted by $\prod_{p}, \mathcal{N}_{p}$ and $\mathcal{I}_{p}$, respectively. For Banach lattices $X$ and $Y$, $\mathcal{F}_{+}(X,Y)$ stands for the set of all positive finite rank operators from $X$ to $Y$.
The letters $p,q,r$ will designate elements of $[1,+\infty]$, and $p^{*}$ denotes the exponent conjugate to $p$ (i.e., $\frac{1}{p}+\frac{1}{p^{*}}=1$).
For a Banach space $E$, we denote by $l_{p}(E)$ and $l^{w}_{p}(E)$ the spaces of all $p$-summable and weakly $p$-summable sequences in $E$, respectively, with their usual norms
$$\|(u_{n})_{n}\| _{p}:=(\sum_{n=1}^{\infty}\|u_{n}\|^{p})^{\frac{1}{p}}, \quad \|(u_{n})_{n}\|_{p}^{w}:=\sup_{u^{*}\in B_{E^{*}}}(\sum_{n=1}^{\infty}|\langle u^{*},u_{n}\rangle|^{p})^{\frac{1}{p}}.$$

The reader is referred to [28,10,23] for any unexplained notation or terminology.

\section{Positively $p$-nuclear operators}

Recall \cite{P} that an operator $S:E\rightarrow F$ is called \textit{$p$-nuclear} if $$S=\sum_{j=1}^{\infty}u^{*}_{j}\otimes v_{j},$$
where $(u^{*}_{j})_{j}\in l_{p}(E^{*}), (v_{j})_{j}\in l^{w}_{p^{*}}(F)$.

One set $$\nu_{p}(S):=\inf \|(u^{*}_{j})_{j}\|_{p}\cdot\|(v_{j})_{j}\|_{p^{*}}^{w},$$
where the infimum is taken over all so-called \textit{$p$-nuclear representations} described above.

$1$-nuclear operators are simply called \textit{nuclear operators}. The class of all nuclear operators with nuclear norm is denoted by $[\mathcal{N},\nu].$
O. I. Zhukova \cite{Zhu} introduced the concept of latticially $p$-nuclear operators which can be considered to be partially positive analogues of $p$-nuclear operators as follows.

\begin{definition}\cite{Zhu}
An operator $S:E\rightarrow X$ is called \textit{latticially $p$-nuclear} if
\begin{align}\label{6}
S=\sum_{j=1}^{\infty}u^{*}_{j}\otimes x_{j},
\end{align}
where $(u^{*}_{j})_{j}\in l_{p}(E^{*}), (x_{j})_{j}\in l^{w}_{p^{*}}(X)_{+}$.

The representation (\ref{6}) is referred to as a \textit{latticially $p$-nuclear representation} of $S$.

Put $$\widetilde{\nu}_{p}(S):=\inf \|(u^{*}_{j})_{j}\|_{p}\cdot\|(x_{j})_{j}\|_{p^{*}}^{w},$$
where the infimum is taken over all latticially $p$-nuclear representations of $S$.
\end{definition}
The class of all latticially $p$-nuclear operators is denoted by $\widetilde{\mathcal{N}}_{p}$. O. I. Zhukova \cite{Zhu} observed that
latticially $p$-nuclear operators have the left positive ideal property, that is, if $S\in \widetilde{\mathcal{N}}_{p}(E,X), T\in \mathcal{L}(F,E)$ and $R:X\rightarrow Y$ is positive, then $RST$ is latticially $p$-nuclear and $\widetilde{\nu}_{p}(RST)\leq \|R\|\widetilde{\nu}_{p}(S)\|T\|$.
It was also pointed out in \cite{Zhu} that $[\widetilde{\mathcal{N}}_{p},\widetilde{\nu}_{p}]\subseteq [\widetilde{\mathcal{N}}_{q},\widetilde{\nu}_{q}]$ for $p<q$.
O. I. Zhukova \cite{Zhu} mentioned that an operator $S:E\rightarrow X$ is latticially $p$-nuclear if and only if
\begin{align}\label{10}
S=\sum_{j=1}^{\infty}u^{*}_{j}\otimes x_{j},
\end{align}
where $(u^{*}_{j})_{j}\in l_{p}(E^{*}), (|x_{j}|)_{j}\in l^{w}_{p^{*}}(X)$.

O. I. Zhukova set $$\widetilde{\nu}_{p}'(S):=\inf \|(u^{*}_{j})_{j}\|_{p}\cdot\|(|x_{j}|)_{j}\|_{p^{*}}^{w},$$
where the infimum is taken over all  representations (\ref{10}) of $S$.

He also observed that $\widetilde{\nu}_{p}'\leq \widetilde{\nu}_{p}\leq 2\widetilde{\nu}_{p}'$ and
$[\widetilde{\mathcal{N}}_{1},\widetilde{\nu}_{1}']=[\mathcal{N},\nu].$

Recall (\cite{Per},\cite[Sec.6.2]{R}) that an operator $S:E\rightarrow F$ is called \textit{right $p$-nuclear} if $S$ can be written as
$$S=\sum_{j=1}^{\infty}u^{*}_{j}\otimes v_{j},$$

\noindent where $(u^{*}_{j})_{j}\in l^{w}_{p^{*}}(E^{*}), (v_{j})_{j}\in l_{p}(F)$.
Moreover, the right $p$-nuclear norm of $S$ is defined as
$$\nu^{p}(S):=\inf \|(u^{*}_{j})_{j}\|_{p^{*}}^{w}\cdot\|(v_{j})_{j}\|_{p},$$
where the infimum is taken all over possible representations of $S$ as above.
The class of all right $p$-nuclear operators is denoted by $\mathcal{N}^{p}$. It is easy to see that if $S:E\rightarrow F$ is $p$-nuclear, then $S^{*}$ is right $p$-nuclear and $\nu^{p}(S^{*})\leq \nu_{p}(S)$.

In this section, we introduce the notion of positively $p$-nuclear operators, inspired by the definition in the Banach space setting.

\begin{definition}
We say that an operator $T: X\rightarrow E$ is \textit{positively $p$-nuclear} if
\begin{align}\label{7}
T=\sum_{j=1}^{\infty}x^{*}_{j}\otimes u_{j},
\end{align}
where $(x^{*}_{j})_{j}\in l^{w}_{p^{*}}(X^{*})_{+}, (u_{j})_{j}\in l_{p}(E)$.
We call the representation (\ref{7}) a \textit{positively $p$-nuclear representation} of $T$. We set
$$\widetilde{\nu}^{p}(T):=\inf \|(x^{*}_{j})_{j}\|_{p^{*}}^{w}\cdot\|(u_{j})_{j}\|_{p},$$
where the infimum is taken over all positively $p$-nuclear representations of $T$.
The class of all positively $p$-nuclear operators is denoted by $\widetilde{\mathcal{N}}^{p}$.
\end{definition}

We collect some basic properties of positively $p$-nuclear operators which are immediate from the definition. These elementary properties will be used throughout the paper.

\begin{proposition}\label{1.1}
\item[(a)]If $T\in \widetilde{\mathcal{N}}^{p}(X,E), S\in \mathcal{L}(E,F)$ and $R:Y\rightarrow X$ is positive, then $STR$ is positively $p$-nuclear and $\widetilde{\nu}^{p}(STR)\leq \|S\|\widetilde{\nu}^{p}(T)\|R\|$.
\item[(b)]$[\widetilde{\mathcal{N}}^{p},\widetilde{\nu}^{p}]\subseteq [\widetilde{\mathcal{N}}^{q}, \widetilde{\nu}^{q}] $ for $p<q$.
\item[(c)]$T: X\rightarrow E$ is positively $p$-nuclear if and only if
\begin{align}\label{8}
T=\sum_{j=1}^{\infty}x^{*}_{j}\otimes u_{j},
\end{align}
where $(|x^{*}_{j}|)_{j}\in l^{w}_{p^{*}}(X^{*}), (u_{j})_{j}\in l_{p}(E)$. In this case, if we let $$|\widetilde{\nu}^{p}|(T):=\inf \|(|x^{*}_{j}|)_{j}\|_{p^{*}}^{w}\cdot\|(u_{j})_{j}\|_{p},$$
where the infimum is taken over all representations (\ref{8}) of $T$, then $$|\widetilde{\nu}^{p}|(T)\leq \widetilde{\nu}^{p}(T)\leq 2|\widetilde{\nu}^{p}|(T).$$
\item[(d)]$[\widetilde{\mathcal{N}}^{1},|\widetilde{\nu}^{1}|]=[\mathcal{N},\nu].$
\item[(e)]If $S\in \widetilde{\mathcal{N}}_{p}(E,X)$, then $S^{*}\in \widetilde{\mathcal{N}}^{p}(X^{*},E^{*})$ and $\widetilde{\nu}^{p}(S^{*})\leq
\widetilde{\nu}_{p}(S).$ The converse is true if $X$ is a dual Banach lattice and $\widetilde{\nu}^{p}(S^{*})=\widetilde{\nu}_{p}(S).$
\item[(f)]If $T\in \widetilde{\mathcal{N}}^{p}(X,E)$, then $T^{*}\in \widetilde{\mathcal{N}}_{p}(E^{*},X^{*})$ and $\widetilde{\nu}_{p}(T^{*})\leq
\widetilde{\nu}^{p}(T).$ The converse is true if $E$ is a dual Banach space and $\widetilde{\nu}_{p}(T^{*})=\widetilde{\nu}^{p}(T).$
\end{proposition}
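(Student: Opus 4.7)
The plan is to verify all six assertions directly from the definition of a positively $p$-nuclear representation, using three ingredients throughout: substitution of such a representation into a composed operator, the decomposition $x^{*}=x^{*}_{+}-x^{*}_{-}$ available in any Banach lattice, and the canonical embeddings $j_{X}:X\to X^{**}$, $j_{E}:E\to E^{**}$.

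For (a), substituting $T=\sum_{j}x^{*}_{j}\otimes u_{j}$ into the composition yields $STR=\sum_{j}(R^{*}x^{*}_{j})\otimes(Su_{j})$; positivity of $R$ forces $R^{*}\geq 0$, so the new sequence in $Y^{*}$ remains positive, with weak $p^{*}$-norm bounded by $\|R\|\,\|(x^{*}_{j})\|_{p^{*}}^{w}$ and strong $p$-norm bounded by $\|S\|\,\|(u_{j})\|_{p}$, and passing to the infimum gives the ideal estimate. For (b), I would use the scaling $a_{j}=\|u_{j}\|^{1-p/q}$ (discarding the zero terms). The rescaled sequences $\widetilde{x}^{*}_{j}=a_{j}x^{*}_{j}$ remain positive because $a_{j}\geq 0$, and $\|\widetilde{u}_{j}\|^{q}=\|u_{j}\|^{p}$. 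Since $p<q$ gives $p^{*}>q^{*}$, H\"{o}lder's inequality with exponents $p^{*}/q^{*}$ and its conjugate puts $(\widetilde{x}^{*}_{j})_{j}$ in $l_{q^{*}}^{w}(X^{*})_{+}$, and a direct computation with the conjugate indices shows the product of the two new norms is bounded by $\|(u_{j})\|_{p}\,\|(x^{*}_{j})\|_{p^{*}}^{w}$, giving $\widetilde{\nu}^{q}(T)\leq\widetilde{\nu}^{p}(T)$.

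For (c), the inequality $|\widetilde{\nu}^{p}|\leq\widetilde{\nu}^{p}$ is immediate since $|x^{*}_{j}|=x^{*}_{j}$ when $x^{*}_{j}\geq 0$. Conversely, given a representation $T=\sum_{j}x^{*}_{j}\otimes u_{j}$ with $(|x^{*}_{j}|)_{j}\in l_{p^{*}}^{w}(X^{*})$, I would decompose $x^{*}_{j}=x^{*}_{j,+}-x^{*}_{j,-}$ and rewrite $T$ as the interleaved sum $\sum_{j}\bigl(x^{*}_{j,+}\otimes u_{j}+x^{*}_{j,-}\otimes(-u_{j})\bigr)$. The bound $|\langle x^{*}_{j,\pm},x\rangle|\leq\langle|x^{*}_{j}|,|x|\rangle$ valid for $x\in B_{X}$ forces the new positive sequence to have weak $p^{*}$-norm at most $2^{1/p^{*}}\|(|x^{*}_{j}|)\|_{p^{*}}^{w}$, while the new strong sequence has $l_{p}$-norm $2^{1/p}\|(u_{j})\|_{p}$, yielding $\widetilde{\nu}^{p}\leq 2|\widetilde{\nu}^{p}|$. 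Part (d) is the specialization $p=1$: $l_{\infty}^{w}(X^{*})$ consists of bounded sequences, and $\||x^{*}|\|=\|x^{*}\|$ in any Banach lattice, so $|\widetilde{\nu}^{1}|$-representations are exactly nuclear representations with matching norms.

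For (f), the elementary identity $(x^{*}\otimes u)^{*}=j_{E}(u)\otimes x^{*}$ shows that a positively $p$-nuclear representation $T=\sum_{j}x^{*}_{j}\otimes u_{j}$ gives the latticially $p$-nuclear representation $T^{*}=\sum_{j}j_{E}(u_{j})\otimes x^{*}_{j}$, of the same norm since $j_{E}$ is an isometry. When $E$ is a dual Banach space, the norm-one projection $\pi_{E}:E^{**}\to E$ together with the factorization $T=\pi_{E}\circ T^{**}\circ j_{X}$ (which follows from $\pi_{E}\circ j_{E}=I_{E}$ and $j_{E}\circ T=T^{**}\circ j_{X}$) converts a latticially $p$-nuclear representation of $T^{*}$ back into a positively $p$-nuclear representation of $T$ of the same norm. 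Part (e) is entirely analogous; the additional ingredients from lattice theory are that $j_{X}:X\to X^{**}$ is a lattice homomorphism, hence positive, and that when $X$ is a dual Banach lattice the canonical projection $X^{**}\to X$ is a positive norm-one band projection. The one nonroutine point I expect is verifying that $\|(j_{X}(x_{j}))\|_{p^{*}}^{w}=\|(x_{j})\|_{p^{*}}^{w}$, which requires Goldstine's theorem applied to $X^{*}$ together with the weak-$*$ lower semicontinuity of $\psi\mapsto\sum_{j}|\langle\psi,j_{X}(x_{j})\rangle|^{p^{*}}$; this is the main technical obstacle in an otherwise routine proof.
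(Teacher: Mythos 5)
The paper gives no proof of this proposition, presenting the six properties as immediate from the definition, and your direct verification --- substituting a representation into $STR$ for (a), the rescaling $a_{j}=\|u_{j}\|^{1-p/q}$ with H\"{o}lder for (b), the splitting $x^{*}_{j}=x^{*}_{j,+}-x^{*}_{j,-}$ with the $2^{1/p^{*}}\cdot 2^{1/p}=2$ bookkeeping for (c), and dualization via $j_{E}$, $j_{X}$ together with the canonical positive norm-one projections onto dual spaces/lattices for (e)--(f) --- is exactly the routine argument intended and is correct. One minor simplification: the equality $\|(j_{X}x_{j})_{j}\|^{w}_{p^{*}}=\|(x_{j})_{j}\|^{w}_{p^{*}}$ follows at once from $\langle\psi,j_{X}x_{j}\rangle=\langle j_{X}^{*}\psi,x_{j}\rangle$ with $\|j_{X}^{*}\psi\|\leq\|\psi\|$, so the appeal to Goldstine's theorem and weak-$*$ lower semicontinuity is not needed (and in (e) only positivity and norm one of the projection $X^{**}\rightarrow X$ are used, not that it is a band projection).
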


\begin{remark}
The class $\widetilde{\mathcal{N}}^{p}$ do not coincide with $\mathcal{N}^{p}$. Indeed, O. I. Zhukova \cite{Zhu} remarked that the operator $T:L_{1}[0,1]\rightarrow L_{2}[0,1]$ defined by $$Tf=\sum\limits_{n=1}^{\infty}\frac{1}{n}(\int_{[0,1]}f(t)r_{n}(t)dt)r_{n},\quad f\in L_{1}[0,1],$$
where $(r_{n})_{n}$ is the Rademacher function sequence, being $p$-nuclear for every $p>1$, is not latticially $p$-nuclear for any $p$. Hence, $T^{*}$ is right $p$-nuclear for every $p>1$. But, by Proposition \ref{1.1} (e), $T^{*}$ is not positively $p$-nuclear for any $p$.
\end{remark}

To describe the conjugate of the space of positively $p$-nuclear operators, we need the concept of positive $p$-majorizing operators introduced in \cite{CBC}.

\begin{definition}\label{1.80}\cite{CBC}
We say that an operator $S:E\rightarrow X$ is \textit{positive $p$-majorizing} if there exists a constant $C>0$ such
\begin{equation}\label{109}
(\sum_{j=1}^{n}|\langle x^{*}_{j},Su_{j}\rangle|^{p})^{\frac{1}{p}}\leq C \|(x^{*}_{j})_{j=1}^{n}\|^{w}_{p},
\end{equation}
for all finite families $(u_{j})_{j=1}^{n}$ in $B_{E}$ and $(x^{*}_{j})_{j=1}^{n}$ in $(X^{*})_{+}$.
\end{definition}

We denote by $\Upsilon_{p}(E,X)$ the space of all positive $p$-majorizing operators from $E$ to $X$. It is easy to see that $\Upsilon_{p}(E,X)$ becomes a Banach space with the norm $\|\cdot\|_{\Upsilon_{p}}$ given by the infimum of the constants $C$ satisfying (\ref{109}). Obviously,
positive $p$-majorizing operators have the left positive ideal property, that is, if $S\in \Upsilon_{p}(E,X), T\in \mathcal{L}(F,E)$ and $R:X\rightarrow Y$ is positive, then $RST$ is positive $p$-majorizing and $\|RST\|_{\Upsilon_{p}}\leq \|R\|\|S\|_{\Upsilon_{p}}\|T\|$.

\begin{definition}\cite{B2}\label{1.9}
An operator $T:X\rightarrow E$ is said to be \textit{positive $p$-summing} if there exists a constant $C>0$ such that
\begin{equation}\label{1}
(\sum_{i=1}^{n}\|Tx_{i}\|^{p})^{\frac{1}{p}}\leq C \|(x_{i})_{i=1}^{n}\|^{w}_{p}.
\end{equation}
for any choice of finitely many vectors $x_{1}, x_{2},\cdots, x_{n}$ in $X_{+}$.
\end{definition}
The space of all positive $p$-summing operators from $X$ to $E$ is denoted by $\Lambda_{p}(X,E)$. This space becomes a Banach space with the norm $\|\cdot\|_{\Lambda_{p}}$ given by the infimum of the constants $C$ satisfying (\ref{1}).
It is easy to see that positive $p$-summing operators have the right positive ideal property, that is, if $T\in \Lambda_{p}(X,E), S\in \mathcal{L}(E,F)$ and $R:Y\rightarrow X$ is positive, then $STR$ is positive $p$-summing and $\|STR\|_{\Lambda_{p}}\leq \|S\|\|T\|_{\Lambda_{p}}\|R\|$.

In \cite{CBC}, we prove the following duality relationships between positive $p$-summing operators and positive $p$-majorizing operators which will be used later.

\begin{theorem}\label{1.6}\cite{CBC}
\item[(a)]An operator $T:X\rightarrow E$ is positive $p$-summing if and only if $T^{*}$ is positive $p$-majorizing. In this case, $\|T\|_{\Lambda_{p}}=\|T^{*}\|_{\Upsilon_{p}}$.
\item[(b)]An operator $S:F\rightarrow Y$ is positive $p$-majorizing if and only if $S^{*}$ is positive $p$-summing. In this case, $\|S\|_{\Upsilon_{p}}=\|S^{*}\|_{\Lambda_{p}}$.
\end{theorem}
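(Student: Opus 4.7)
My plan is to prove part (b) first by a direct reformulation, then use it to reduce part (a) to a lifting statement from $X$ to $X^{**}$, which is the main technical step.

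\emph{Part (b).} Start from the defining inequality for $S: F \to Y$ positive $p$-majorizing with constant $C$, namely $(\sum_j |\langle y_j^*, Su_j\rangle|^p)^{1/p} \leq C\|(y_j^*)\|_p^w$ for $u_j \in B_F$ and $y_j^* \in (Y^*)_+$. Rewrite $\langle y_j^*, Su_j\rangle = \langle S^* y_j^*, u_j\rangle$ and take the supremum $\sup_{u_j \in B_F}$ coordinate by coordinate (legitimate because each $u_j$ appears only in the $j$-th summand) to obtain $(\sum_j \|S^* y_j^*\|^p)^{1/p} \leq C \|(y_j^*)\|_p^w$, which is exactly the positive $p$-summing inequality for $S^*: Y^* \to F^*$. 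The step is reversible via $|\langle S^* y_j^*, u_j\rangle| \leq \|S^* y_j^*\|$, so $S$ is positive $p$-majorizing iff $S^*$ is positive $p$-summing, with matching norms.

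\emph{Part (a).} Applying part (b) with $S = T^*$ turns ``$T^*$ positive $p$-majorizing'' into ``$T^{**}: X^{**} \to E^{**}$ positive $p$-summing'', with equal norms. Thus part (a) reduces to the claim that $T$ is positive $p$-summing iff $T^{**}$ is, with equal norms. The backward direction is immediate: the canonical image of $X_+$ sits inside $(X^{**})_+$ and the embedding preserves the weak-$p$ norm (since $B_{X^*}$ is $w^*$-dense in $B_{X^{***}}$), so restricting the inequality for $T^{**}$ yields the inequality for $T$. For the forward direction---the main obstacle---my plan is to invoke a Pietsch-type domination for positive $p$-summing operators, obtainable by a Hahn--Banach / Ky Fan argument on the $w^*$-compact convex set of Borel probability measures on $B_{X^*}$: if $T$ is positive $p$-summing with constant $C$, then there exists a regular Borel probability measure $\mu$ on $(B_{X^*},w^*)$ with $\|Tx\| \leq C\bigl(\int_{B_{X^*}} |\langle x^*,x\rangle|^p \, d\mu(x^*)\bigr)^{1/p}$ for all $x \in X_+$.

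Extending this domination to $(X^{**})_+$ is the heart of the argument. For $x^{**} \in (X^{**})_+$, the $w^*$-density of $X_+$ in $(X^{**})_+$ (a Hahn--Banach separation exploiting that $(X^*)_+$ is the polar cone of $-X_+$) combined with Goldstine's theorem produces a bounded net $(x_\alpha) \subset X_+$ with $\|x_\alpha\| \leq \|x^{**}\| + \varepsilon$ and $x_\alpha \to x^{**}$ in $\sigma(X^{**},X^*)$. Norm lower semi-continuity gives $\|T^{**} x^{**}\| \leq \liminf_\alpha \|T x_\alpha\|$, and dominated convergence on $(B_{X^*},\mu)$---with dominating constant $(\|x^{**}\|+\varepsilon)^p$---upgrades Pietsch's inequality to $\|T^{**} x^{**}\| \leq C \bigl(\int |\langle x^*, x^{**}\rangle|^p\, d\mu(x^*)\bigr)^{1/p}$. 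Applied to a finite family $(x_j^{**}) \subset (X^{**})_+$ and $u_j^* \in B_{E^*}$, one estimates $|\langle x_j^{**}, T^* u_j^*\rangle| \leq \|T^{**}x_j^{**}\|$, sums the $p$-th powers, swaps finite sum with integral, and bounds $\sum_j |\langle x^*, x_j^{**}\rangle|^p \leq \|(x_j^{**})\|_p^{wp}$ for each $x^* \in B_{X^*}$; since $\mu$ is a probability measure this gives $(\sum_j |\langle x_j^{**}, T^*u_j^*\rangle|^p)^{1/p} \leq C \|(x_j^{**})\|_p^w$, as required.

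The main technical hurdle is establishing the positive Pietsch domination together with the weak-$*$ approximation by elements of $X_+$ with uniform norm control; both proceed by standard Banach-lattice arguments. An alternative route, bypassing Pietsch domination, would be a Banach-lattice principle of local reflexivity that simultaneously controls the weak-$p$ norm and the pairing against a prescribed finite family in $X^*$, but this is technically more delicate.
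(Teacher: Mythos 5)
The theorem itself is only quoted here from \cite{CBC}, so the comparison is with the argument that paper (and the proof of Theorem \ref{1.13} in the present one) actually runs. Your part (b) is correct (it is essentially definitional), your reduction of part (a) via (b) to the statement ``$T$ positive $p$-summing $\Rightarrow T^{**}$ positive $p$-summing with the same constant'' is correct, the easy converse is correct, and a positive Pietsch domination on $X_+$ does hold (the separation/Ky Fan argument survives because scaling by positive reals preserves positivity); the weak$^*$ density of $B_X\cap X_+$ in $B_{X^{**}}\cap (X^{**})_+$ is also true. The genuine gap is the step you call ``dominated convergence'': the approximating family $(x_\alpha)\subset X_+$ coming from Goldstine is a net, not a sequence, and dominated convergence is a sequence theorem. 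What your argument needs is $\liminf_\alpha\int_{B_{X^*}}|\langle x^*,x_\alpha\rangle|^p\,d\mu\le\int_{B_{X^*}}|\langle x^{**},x^*\rangle|^p\,d\mu$, and this can fail for a uniformly bounded, pointwise convergent net: on $[0,1]$ with Lebesgue measure, indexing by finite sets $F\subset[0,1]$, choose continuous $0\le f_F\le1$ vanishing on $F$ with $\int f_F\ge\tfrac12$; then $f_F\to0$ pointwise while all integrals stay $\ge\tfrac12$. Worse, the limit function $x^*\mapsto\langle x^{**},x^*\rangle$ is in general not weak$^*$-Borel on $B_{X^*}$ (it is weak$^*$-continuous only when $x^{**}\in X$), so the dominating integral need not even be well defined. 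Nor can you dodge the measure and control $\|(x_{j,\alpha})_j\|_p^w$ along the net directly: the weak $p$-norm is weak$^*$-lower semicontinuous, which is the wrong direction. So the heart of part (a), namely $\|T^*\|_{\Upsilon_p}\le\|T\|_{\Lambda_p}$, is not closed by your argument.

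The route you set aside as ``technically more delicate'' is the one that works and is the intended one: given $x_1^{**},\dots,x_n^{**}\in(X^{**})_+$ and $u_1^*,\dots,u_n^*\in B_{E^*}$, apply Lemma \ref{1.4} in the order complete lattice $X^{**}$ to $\operatorname{span}\{x_j^{**}\}$ to get a finite-dimensional sublattice $G$ and a positive projection $P$ with $\|Px_j^{**}-x_j^{**}\|\le\epsilon\|x_j^{**}\|$, and then the lattice principle of local reflexivity (Theorem \ref{1.3}) to $G$ and $L=\operatorname{span}\{T^*u_1^*,\dots,T^*u_n^*\}\subset X^*$, obtaining a lattice isomorphism $R:G\to X$ with $\|R\|,\|R^{-1}\|\le1+\epsilon$ and $|\langle x^{**},x^*\rangle-\langle x^*,Rx^{**}\rangle|\le\epsilon\|x^{**}\|\,\|x^*\|$ on $G\times L$. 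The vectors $x_j:=RPx_j^{**}$ lie in $X_+$ (this is exactly why the ordinary principle of local reflexivity does not suffice), satisfy $\|(x_j)_j\|_p^w\le(1+\epsilon)^2\|(x_j^{**})_j\|_p^w$ up to the projection error, and reproduce the pairings $\langle x_j^{**},T^*u_j^*\rangle$ up to errors controlled against the finitely many functionals in $L$; applying the positive $p$-summing inequality to $(x_j)_j$ and letting $\epsilon\to0$ yields $\|T^*\|_{\Upsilon_p}\le\|T\|_{\Lambda_p}$. This is precisely the pattern of the proof of Theorem \ref{1.13} above, so to repair your write-up you should replace the Pietsch-plus-net step by this finite-dimensional sublattice argument (or supply a genuinely new way to push the domination to $(X^{**})_+$, which your current text does not).
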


Recall that a Banach space $E$ has the \textit{approximation property} ($AP$ for short) if for every $\epsilon>0$ and for every compact subset $K$ of $E$, there exists an operator $S\in \mathcal{F}(E)$ such that $\|Su-u\|<\epsilon$ for all $u\in K$. In addition, if the operator $S$ can be chosen with $\|S\|\leq 1$, $E$ is said to has the \textit{metric approximation property} ($MAP$).
A Banach lattice $X$ is said to have the \textit{positive metric approximation property} ($PMAP$) if for every $\epsilon>0$ and for every compact subset $K$ of $X$, there exists an operator $R\in \mathcal{F}_{+}(X)$ with $\|R\|\leq 1$ such that $\|Rx-x\|<\epsilon$ for all $x\in K$.

We need a result due to A. Lissitsin and E. Oja \cite{LO} which says that positive finite-rank operators between dual Banach lattices are locally conjugate.

\begin{lemma}\label{3.8}\cite{LO}
Let $X,Y$ be Banach lattices, let $F$ be a finite subset of $Y^{*}$ and let $\epsilon>0$. If $S\in \mathcal{F}_{+}(Y^{*},X^{*})$, then there exists an operator $R\in \mathcal{F}_{+}(X,Y)$ such that $\|R\|\leq (1+\epsilon)\|S\|$ and $\|R^{*}y^{*}-Sy^{*}\|<\epsilon$ for all $y^{*}\in F$.
\end{lemma}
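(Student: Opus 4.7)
This is a positive (Banach-lattice) analogue of the classical adjoint-approximation property for finite-rank operators, and my plan is to adapt the principle of local reflexivity (PLR) to the ordered setting via a two-step factorisation of $S$ through a finite-dimensional subspace of $Y^{**}$.

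First, I would observe that the adjoint $S^{*}:X^{**}\rightarrow Y^{**}$ is positive (as $S$ is positive), so its restriction $\widehat{S}:=S^{*}|_{X}:X\rightarrow Y^{**}$ is a positive finite-rank operator with $\|\widehat{S}\|\le \|S\|$. Let $F_{0}:=\widehat{S}(X)\subseteq Y^{**}$; since $\widehat{S}(X_{+})\subseteq F_{0}\cap Y^{**}_{+}$ and $X=X_{+}-X_{+}$, the cone $F_{0}\cap Y^{**}_{+}$ is generating in the finite-dimensional subspace $F_{0}$. This is the ambient space into which we push the reflexivity argument.

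Next I would invoke the positive principle of local reflexivity for Banach lattices (in the form established by Lissitsin and Oja): given the finite-dimensional subspace $F_{0}\subseteq Y^{**}$ and the finite set $F\subseteq Y^{*}$, there exists a linear operator $T:F_{0}\rightarrow Y$ such that $\|T\|\le 1+\epsilon$, $\langle y^{*},T\eta\rangle=\langle \eta,y^{*}\rangle$ for every $\eta\in F_{0}$ and every $y^{*}\in F$, and $T(F_{0}\cap Y^{**}_{+})\subseteq Y_{+}$. Define $R:=T\circ\widehat{S}:X\rightarrow Y$. Since $\widehat{S}$ is positive with range in $F_{0}$ and $T$ is positive on the cone $F_{0}\cap Y^{**}_{+}$, the composition $R$ is positive and finite-rank; moreover $\|R\|\le \|T\|\,\|\widehat{S}\|\le (1+\epsilon)\|S\|$. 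The pairing property of $T$ yields, for every $y^{*}\in F$ and $x\in X$,
\[ \langle R^{*}y^{*},x\rangle=\langle y^{*},T\widehat{S}x\rangle=\langle \widehat{S}x,y^{*}\rangle=\langle Sy^{*},x\rangle, \]
so in fact $R^{*}y^{*}=Sy^{*}$ \emph{exactly} on $F$, which is strictly stronger than the required $\epsilon$-approximation.

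The main obstacle is the availability of the positive PLR with the triple guarantee of a near-isometric norm bound, exact pairing with a prescribed finite set of functionals in $Y^{*}$, and preservation of positivity on the cone of $F_{0}$. This is the technical heart of the Lissitsin--Oja construction and rests on an ordered refinement of the classical Lindenstrauss--Rosenthal theorem, exploiting the lattice structure of $Y^{**}$ and the fact that positive rank-one lifts from $Y^{**}_{+}$ to $Y_{+}$ can be patched together on a finite-dimensional cone. Once this ingredient is granted, the factorisation $R=T\circ\widehat{S}$ makes positivity, the norm bound, and the approximation on $F$ fall into place automatically.
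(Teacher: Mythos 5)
The paper does not prove this lemma at all --- it quotes it from Lissitsin--Oja --- so your proposal has to stand on its own, and as written it has a genuine gap: the ``positive principle of local reflexivity'' you invoke is not a theorem you can cite, it is essentially the statement to be proved (indeed in a strictly stronger, exact form). The lattice version of local reflexivity that actually exists (Conroy--Moore/Bernau, stated as Theorem \ref{1.3} in this paper) requires the domain to be a finite-dimensional \emph{sublattice} of $Y^{**}$, produces a lattice isomorphism into $Y$, and gives only \emph{approximate} duality, $|\langle y^{**},y^{*}\rangle-\langle y^{*},Ry^{**}\rangle|\leq\epsilon\|y^{**}\|\|y^{*}\|$; the classical Banach-space principle of local reflexivity gives exact pairing with finitely many functionals but no positivity. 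Your $F_{0}=\widehat{S}(X)$ is merely a finite-dimensional subspace with a generating cone, in general not a sublattice, so neither tool applies, and there is no known result delivering simultaneously $\|T\|\leq 1+\epsilon$, positivity on $F_{0}\cap Y^{**}_{+}$, and \emph{exact} pairing on $F$. The fact that your argument ends with $R^{*}y^{*}=Sy^{*}$ exactly on $F$, stronger than the $\epsilon$-approximation in the lemma, is itself a warning sign that the black box you granted contains the entire difficulty.

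The factorization skeleton $R=T\circ\widehat{S}$ with $\widehat{S}=S^{*}|_{X}=S^{*}J_{X}$ is the right start, and it is how Lissitsin--Oja proceed; what is missing is the reduction to a sublattice. Since $Y^{**}$ is a dual Banach lattice, it is order complete, so Lemma \ref{1.4} applies to the finite-dimensional subspace $F_{0}\subseteq Y^{**}$: it yields a finite-dimensional sublattice $G$ and a positive projection $P$ with $\|Py^{**}-y^{**}\|\leq\delta\|y^{**}\|$ on $F_{0}$. One then applies Theorem \ref{1.3} to $G$ and $L:=\mathrm{span}(F)\subseteq Y^{*}$ to get a lattice isomorphism $B:G\rightarrow Y$ with $\|B\|,\|B^{-1}\|\leq 1+\delta$ and approximate duality, and sets $R:=BP\widehat{S}\in\mathcal{F}_{+}(X,Y)$. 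With $\delta$ chosen small relative to $\epsilon$, $\|S\|$ and $\max_{y^{*}\in F}\|y^{*}\|$, this gives $\|R\|\leq(1+\epsilon)\|S\|$ and $\|R^{*}y^{*}-Sy^{*}\|<\epsilon$ for $y^{*}\in F$ --- approximation, not equality. Your proposal skips exactly this two-step (positive projection onto a sublattice, then lattice local reflexivity) construction, which is the technical heart of the lemma.
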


It follows from Lemma \ref{3.8} that $X^{*}$ has the $PMAP$ if and only if for every $\epsilon>0$ and each compact subset $K$ of $X^{*}$, there exists an operator
$R\in \mathcal{F}_{+}(X)$ with $\|R\|\leq 1$ such that $\|R^{*}x^{*}-x^{*}\|<\epsilon$ for all $x^{*}\in K$.

\begin{lemma}\label{1.2}
Suppose that $E$ has the $AP$ or $X^{*}$ has the $PMAP$. Assume that $S:E\rightarrow X^{**}$ is positive $p^{*}$-majorizing. Let $(x^{*}_{n})_{n}\in (l^{w}_{p^{*}}(X^{*}))_{+}$ and $(u_{n})_{n}\in l_{p}(E)$. Then
\begin{center}
$\sum\limits_{n=1}^{\infty}x^{*}_{n}\otimes u_{n}=0$ implies $\sum\limits_{n=1}^{\infty}\langle Su_{n},x^{*}_{n}\rangle=0.$
\end{center}
\end{lemma}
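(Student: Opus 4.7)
The plan is to combine a finite-rank approximation (drawn either from the AP of $E$ or from the PMAP of $X^{*}$ via Lemma~\ref{3.8}) with a truncation of the infinite sum, exploiting the hypothesis $T := \sum_n x_n^* \otimes u_n = 0$ to make the ``approximated'' sum vanish exactly and then estimate the residual error. Before anything else I would record absolute convergence: by Theorem~\ref{1.6}(b) the composition $S^* J_{X^*} : X^* \to E^*$ (with $J_{X^*}: X^* \hookrightarrow X^{***}$ canonical) is positive $p^*$-summing, so on the positive sequence $(x_n^*)$ it delivers $(S^* J_{X^*} x_n^*) \in l_{p^*}(E^*)$ with norm at most $\|S\|_{\Upsilon_{p^*}} \|(x_n^*)\|^w_{p^*}$; rewriting $\langle Su_n, x_n^* \rangle = \langle u_n, S^* J_{X^*} x_n^* \rangle$ and applying H\"older against $(u_n) \in l_p(E)$ gives $\sum_n |\langle Su_n, x_n^* \rangle| < \infty$.

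The central algebraic step is the identity that the approximated sum equals zero. In the PMAP-of-$X^{*}$ case, Lemma~\ref{3.8} produces $R \in \mathcal{F}_+(X)$ with $\|R\| \le 1 + \epsilon$ such that $R^* x_n^*$ is arbitrarily close to $x_n^*$ on any prescribed finite family; writing $R = \sum_{k=1}^m y_k^* \otimes z_k$, so $R^* x_n^* = \sum_k \langle x_n^*, z_k\rangle y_k^*$, a swap of the (absolutely convergent) double sum yields
\[
\sum_n \langle Su_n, R^* x_n^* \rangle \;=\; \sum_{k=1}^m \bigl\langle S^* J_{X^*} y_k^*,\, T z_k \bigr\rangle \;=\; 0,
\]
since $T z_k = 0$ for each $k$. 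In the AP-of-$E$ case one works instead with $R \in \mathcal{F}(E)$ written as $R = \sum_k g_k^* \otimes f_k$, obtaining $\sum_n \langle SR u_n, x_n^* \rangle = \sum_k \langle Sf_k, \xi_k \rangle = 0$, where $\xi_k := \sum_n \langle g_k^*, u_n\rangle x_n^* \in X^*$ vanishes because $\langle \xi_k, x \rangle = g_k^*(Tx) = 0$ for every $x \in X$; the series defining $\xi_k$ converges in norm in $X^*$ by the $l_p$--$l_{p^*}^w$ duality for $p>1$ and absolutely for $p=1$.

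The residual $|\sum_n \langle Su_n, x_n^*\rangle - (\text{approximated sum})|$ must then be pushed to $0$. Given $\epsilon > 0$ I would truncate at $N$ large enough that $(\sum_{n>N}\|u_n\|^p)^{1/p} < \epsilon$ and $\sum_{n>N} |\langle Su_n, x_n^* \rangle| < \epsilon$, then choose the finite-rank $R$ tailored to $\{u_1, \dots, u_N\}$ or $\{x_1^*, \dots, x_N^*\}$. The head is a controlled finite sum of termwise-bounded pieces; the tail is handled by a H\"older estimate of the form $\|S\|_{\Upsilon_{p^*}} \|R\| \|(x_n^*)\|^w_{p^*} \cdot (\sum_{n>N}\|u_n\|^p)^{1/p}$, arising from the positive $p^*$-majorizing inequality applied to the positive family $R^* x_n^* \in X^*_+$ (in the AP case one uses the analogous bound for $Ru_n$, with $(u_n - Ru_n) \in l^w_p(E)$ of small weak $p$-norm obtained by approximating the compact operator $A: l_{p^*} \to E,\; Ae_n = u_n$ by $RA$ with $\|RA - A\|<\epsilon$). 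The main obstacle is precisely norm-control of $R$: in the PMAP case Lemma~\ref{3.8} caps $\|R\| \le 1+\epsilon$, whereas in the AP-of-$E$ case $\|R\|$ is a priori uncontrolled, so the tail estimate must be executed by pre-committing to $N$ before invoking AP and ordering the quantifiers so that the tail sum absorbs the eventual factor of $1 + \|R\|$; this asymmetric bookkeeping is what makes the two halves of the hypothesis (AP versus PMAP) play symmetric roles in the conclusion.
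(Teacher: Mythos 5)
Your preliminary reductions (absolute convergence of $\sum_n\langle Su_n,x^*_n\rangle$ via Theorem \ref{1.6} and H\"older) and the algebraic vanishing identities for the approximated sums are correct, and your PMAP-of-$X^*$ half is essentially the paper's own Case 2: there Lemma \ref{3.8} gives $\|R\|\le 1$, so the head/tail split with $N$ fixed in advance goes through. The genuine gap is in the AP-of-$E$ half, and it sits exactly where you wave at the ``asymmetric bookkeeping'': the AP gives no bound on $\|R\|$, and neither of your two remedies closes this. (i) ``Pre-committing to $N$ before invoking AP'' is circular rather than a fix: once $N$ is fixed, $(\sum_{n>N}\|u_n\|^p)^{1/p}$ is a fixed positive number, and the operator $R$ you then get from the AP (chosen to approximate $u_1,\dots,u_N$) has norm depending on $N$ and on the accuracy, with no a priori control; the tail contribution is of size $(1+\|R\|)(\sum_{n>N}\|u_n\|^p)^{1/p}\,\|S\|_{\Upsilon_{p^*}}\|(x^*_n)_n\|^w_{p^*}$ and need not be small. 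Choosing $N$ after $R$ instead destroys the head approximation, so the quantifiers cannot be ordered as you claim. (ii) Approximating the compact operator $A:l_{p^*}\to E$, $Ae_n=u_n$, by $RA$ in operator norm only makes the \emph{weak} $l_p$-norm of $(u_n-Ru_n)_n$ small, but every estimate available here --- H\"older against $(S^*J_{X^*}x^*_n)_n\in l_{p^*}(E^*)$, or the positive $p^*$-majorizing inequality for $S$ --- needs the \emph{strong} $l_p$-norm of the $E$-valued sequence. Smallness of the weak norm does not suffice: already the scalar pairing inequality $\sum_n|\langle w^*_n,v_n\rangle|\le\|(v_n)_n\|^w_p\,\|(w^*_n)_n\|_{p^*}$ is false (in $l_2$ take $v_n=w^*_n=e_n$ for $n\le N$: the left side is $N$ while the right side is $N^{1/2}$), so a small $\|(u_n-Ru_n)_n\|^w_p$ cannot be converted into a small trace.

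The paper's Case 1 shows how to repair this half without any truncation and without ever seeing the norm of the approximant: choose weights $1\le\xi_n\to\infty$ with $\|(\xi_nu_n)_n\|_p\le(1+\epsilon)\|(u_n)_n\|_p$ and apply the AP of $E$ to the norm-compact set $\{u_n/(\xi_n\|u_n\|):n\ge1\}$ (it is norm-null since $\xi_n\to\infty$). The resulting single $U\in\mathcal{F}(E)$ satisfies $\|u_n-Uu_n\|\le\epsilon\,\xi_n\|u_n\|$ for every $n$, hence $\|(u_n-Uu_n)_n\|_p\le\epsilon(1+\epsilon)\|(u_n)_n\|_p$; since $\sum_n\langle SUu_n,x^*_n\rangle=0$ exactly (your algebraic identity), the whole sum is estimated by $\epsilon(1+\epsilon)\|S\|_{\Upsilon_{p^*}}\|(x^*_n)_n\|^w_{p^*}\|(u_n)_n\|_p$, with $\|U\|$ never entering. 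With this replacement for your AP case, and your PMAP case as written, the proof is complete and coincides with the paper's.
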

\begin{proof}
It is clear that the conclusion holds true if $S$ is finite-rank. Suppose that $S:E\rightarrow X^{**}$ is positive $p^{*}$-majorizing.

\noindent Case 1. $E$ has the $AP$.

Let $\epsilon>0$. Choose $1\leq \xi_{n}\rightarrow \infty$ with $\|(\xi_{n}u_{n})_{n}\|_{p}\leq (1+\epsilon)\|(u_{n})_{n}\|_{p}$. Since $E$ has the $AP$, there exists an operator $U\in \mathcal{F}(E)$ such that $\|U(\frac{u_{n}}{\xi_{n}\|u_{n}\|})-\frac{u_{n}}{\xi_{n}\|u_{n}\|}\|<\epsilon$ for all $n$. Note that $\sum\limits_{n=1}^{\infty}\langle SUu_{n},x^{*}_{n}\rangle=0$. By Theorem \ref{1.6}, we get
\begin{align*}
|\sum\limits_{n=1}^{\infty}\langle Su_{n},x^{*}_{n}\rangle|&=|\sum\limits_{n=1}^{\infty}\langle S(u_{n}-Uu_{n}),x^{*}_{n}\rangle|\\
&=|\sum\limits_{n=1}^{\infty}\langle S^{*}J_{X^{*}}x^{*}_{n},u_{n}-Uu_{n}\rangle|\\
&\leq (\sum_{n=1}^{\infty}\|S^{*}J_{X^{*}}x^{*}_{n}\|^{p^{*}})^{\frac{1}{p^{*}}}(\sum_{n=1}^{\infty}\|u_{n}-Uu_{n}\|^{p})^{\frac{1}{p}}\\
&\leq \epsilon(1+\epsilon)\|S\|_{\Upsilon_{p^{*}}}\|(x^{*}_{n})_{n=1}^{\infty}\|_{p^{*}}^{w}\|(u_{n})_{n}\|_{p}\\
\end{align*}

Letting $\epsilon\rightarrow 0$, we get $$\sum\limits_{n=1}^{\infty}\langle Su_{n},x^{*}_{n}\rangle=0.$$

\noindent Case 2. $X^{*}$ has the $PMAP$.

We may assume that $\lim\limits_{n\rightarrow \infty}\|x^{*}_{n}\|=0$. Let $\epsilon>0$. We choose a positive integral $N$ with $(\sum\limits_{n=N+1}^{\infty}\|u_{n}\|^{p})^{\frac{1}{p}}<\epsilon.$ Let $\delta>0$ be such that
$\delta N^{\frac{1}{p^{*}}}\|S\|\|(u_{n})_{n=1}^{\infty}\|_{p}<\epsilon.$ Since $X^{*}$ has the $PMAP$, it follows from Lemma \ref{3.8} that there exists an operator $R\in \mathcal{F}_{+}(X)$ with $\|R\|\leq 1$ such that $\|R^{*}x^{*}_{n}-x^{*}_{n}\|<\delta$ for all $n$. Note that $\sum\limits_{n=1}^{\infty}\langle R^{**}Su_{n},x^{*}_{n}\rangle=0.$ By Theorem \ref{1.6}, we get
\begin{align*}
|\sum\limits_{n=1}^{\infty}\langle Su_{n},x^{*}_{n}\rangle|&=|\sum\limits_{n=1}^{\infty}\langle Su_{n},x^{*}_{n}-R^{*}x^{*}_{n}\rangle|\\
&\leq \sum\limits_{n=1}^{N}|\langle Su_{n},x^{*}_{n}-R^{*}x^{*}_{n}\rangle|+\sum\limits_{n=N+1}^{\infty}|\langle S^{*}x^{*}_{n},u_{n}\rangle|+
\sum\limits_{n=N+1}^{\infty}|\langle S^{*}R^{*}x^{*}_{n},u_{n}\rangle|\\
&\leq (\sum_{n=1}^{N}\|x^{*}_{n}-R^{*}x^{*}_{n}\|^{p^{*}})^{\frac{1}{p^{*}}}(\sum_{n=1}^{N}\|Su_{n}\|^{p})^{\frac{1}{p}}+
(\sum_{n=N+1}^{\infty}\|S^{*}x^{*}_{n}\|^{p^{*}})^{\frac{1}{p^{*}}}(\sum_{n=N+1}^{\infty}\|u_{n}\|^{p})^{\frac{1}{p}}\\
&+(\sum_{n=N+1}^{\infty}\|S^{*}R^{*}x^{*}_{n}\|^{p^{*}})^{\frac{1}{p^{*}}}(\sum_{n=N+1}^{\infty}\|u_{n}\|^{p})^{\frac{1}{p}}\\
&\leq \delta N^{\frac{1}{p^{*}}}\|S\|\|(u_{n})_{n=1}^{\infty}\|_{p}+2\epsilon\|S\|_{\Upsilon_{p^{*}}}\|(x^{*}_{n})_{n=1}^{\infty}\|_{p^{*}}^{w}\\
&\leq \epsilon+2\epsilon\|S\|_{\Upsilon_{p^{*}}}\|(x^{*}_{n})_{n=1}^{\infty}\|_{p^{*}}^{w}\\
\end{align*}
Letting $\epsilon\rightarrow 0$, we get $$\sum\limits_{n=1}^{\infty}\langle Su_{n},x^{*}_{n}\rangle=0.$$

\end{proof}

Consequently, under the hypothesis of Lemma \ref{1.2}, if $T:X\rightarrow E$ is positively $p$-nuclear and $S:E\rightarrow X^{**}$ is positive $p^{*}$-majorizing, then $\textrm{trace}(ST):=\sum\limits_{n=1}^{\infty}\langle Su_{n},x^{*}_{n}\rangle$ is independent of the choice of the positively $p$-nuclear representation $T=\sum\limits_{n=1}^{\infty}x^{*}_{n}\otimes u_{n}$. Moreover, it is easy to see that $|\textrm{trace}(ST)|\leq \|S\|_{\Upsilon_{p^{*}}}\widetilde{\nu}^{p}(T).$

To prove the main result of this section, we need a lemma due to A. Lissitsin and E. Oja \cite{LO} that demonstrates the connection between finite-dimensional subspaces and finite-dimensional sublattices in order complete Banach lattices. This lemma will be used frequently throughout this paper.

\begin{lemma}\label{1.4}\cite[Lemma 5.5]{LO}
Let $M$ be a finite-dimensional subspace of an order complete Banach lattice $X$ and let
$\epsilon > 0$. Then there exist a sublattice $Z$ of $X$ containing $M$, a finite-dimensional sublattice $G$ of $Z$, and a positive projection $P$ from $Z$ onto $G$ such that $\|Px-x\|\leq \epsilon\|x\|$ for all $x\in M$.
\end{lemma}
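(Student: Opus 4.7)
The plan is to reduce the problem to Kakutani's representation of the principal ideal generated by $M$ and construct the finite-dimensional sublattice inside the resulting $C(K)$-model. First I pick an Auerbach basis $\{x_{1},\ldots,x_{n}\}$ of $M$, so that any $y=\sum_{i}\alpha_{i}x_{i}\in M$ satisfies $|\alpha_{i}|\leq\|y\|$ for every $i$. Set $u:=|x_{1}|\vee\cdots\vee|x_{n}|$ and let $Z:=X_{u}$ be the principal ideal generated by $u$ in $X$. Then $M\subseteq Z$, $Z$ is a sublattice of $X$, and $Z$ inherits order completeness from $X$. Equipping $Z$ with the order unit norm $\|y\|_{u}:=\inf\{\lambda>0:|y|\leq\lambda u\}$ turns $(Z,\|\cdot\|_{u})$ into an $AM$-space with unit, so Kakutani's representation theorem furnishes a lattice isometric isomorphism $\Phi:(Z,\|\cdot\|_{u})\to C(K)$ with $\Phi u=\chi_{K}$; because $Z$ is order complete, $K$ is extremally disconnected (Stonean).

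Next I approximate the functions $f_{i}:=\Phi x_{i}\in C(K)$ by step functions supported on a clopen partition of $K$. Given a parameter $\delta>0$ (to be fixed later), a standard argument in Stonean spaces, choosing a sufficiently fine grid in $\mathbb{R}^{n}$ and lifting the preimages of its cubes to clopen sets by using that the closure of every open set in $K$ is clopen, produces a partition $\{U_{1},\ldots,U_{m}\}$ of $K$ into clopen sets on each of which every $f_{i}$ oscillates by at most $\delta$. Define $G:=\Phi^{-1}\bigl(\mathrm{span}\{\chi_{U_{1}},\ldots,\chi_{U_{m}}\}\bigr)$, a finite-dimensional sublattice of $Z$.

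The projection is then immediate: choose a point $t_{j}\in U_{j}$ and define $\widetilde{P}f:=\sum_{j}f(t_{j})\,\chi_{U_{j}}$ on $C(K)$. Clearly $\widetilde{P}$ is a positive, norm-one projection onto $\Phi(G)$, so $P:=\Phi^{-1}\widetilde{P}\Phi$ is a positive projection from $Z$ onto $G$. The oscillation bound gives $\|Px_{i}-x_{i}\|_{u}=\|\widetilde{P}f_{i}-f_{i}\|_{\infty}\leq\delta$ for every $i$, and since $|y|\leq\|y\|_{u}\,u$ in $X$ we get $\|Px_{i}-x_{i}\|\leq\delta\,\|u\|$.

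Finally, for arbitrary $y=\sum_{i}\alpha_{i}x_{i}\in M$ the Auerbach property yields $\|Py-y\|\leq\sum_{i}|\alpha_{i}|\,\|Px_{i}-x_{i}\|\leq n\delta\|u\|\,\|y\|$, so choosing $\delta:=\varepsilon/(n\|u\|)$ from the start gives $\|Py-y\|\leq\varepsilon\|y\|$ on $M$. The main obstacle is the clopen-refinement step: one must use extremal disconnectedness of $K$ (i.e. order completeness of $X$) carefully to obtain a single clopen partition that simultaneously controls the oscillation of all $n$ functions $f_{i}$. Everything else is routine Kakutani representation together with a direct norm-one projection construction.
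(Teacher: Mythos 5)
Your proposal is correct. Note that the paper itself gives no proof of this lemma: it is quoted verbatim from Lissitsin and Oja \cite[Lemma 5.5]{LO}, so there is nothing internal to compare against; your argument is the classical one behind such statements (principal ideal $Z=X_{u}$ with $u=|x_{1}|\vee\cdots\vee|x_{n}|$, Kakutani representation of the order-unit norm, extremal disconnectedness of $K$ from order completeness, a clopen partition with small oscillation, and the evaluation projection $\widetilde{P}f=\sum_{j}f(t_{j})\chi_{U_{j}}$), and each step checks out: $X_{u}$ is an order complete sublattice containing $M$, $(X_{u},\|\cdot\|_{u})$ is a Dedekind complete $AM$-space with unit, and on a Stonean $K$ the closures of the preimages of finitely many small open cubes under $(f_{1},\ldots,f_{n})$ are clopen, so disjointifying them (differences of clopen sets are clopen) yields the required partition controlling all $f_{i}$ simultaneously. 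The only loose ends are trivial: the degenerate case $u=0$ (i.e.\ $M=\{0\}$), and spelling out the disjointification just mentioned; with $\delta=\epsilon/(n\|u\|)$ the Auerbach-basis estimate then gives $\|Px-x\|\leq\epsilon\|x\|$ on $M$ exactly as you wrote.
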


We also need the principle of local reflexivity in Banach lattices due to J. L. Conroy, L. C. Moore \cite{CM} and S. J. Bernau \cite{Be}, which plays a crucial role in Banach lattice theory.

\begin{theorem}\label{1.3}\cite[Theorem 2]{Be}
Let $X$ be a Banach lattice and let $M$ be a finite-dimensional sublattice of $X^{**}$.
Then for every finite-dimensional subspace $L$ of $X^{*}$ and  every $\epsilon > 0$,
there exists a lattice isomorphism $R$ from $M$ into $X$ such that
\item[(i)]$\|R\|, \|R^{-1}\| \leq 1+\epsilon$;
\item[(ii)]$|\langle x^{**},x^*\rangle-\langle x^{*},Rx^{**}\rangle| \leq \epsilon\|x^{**}\|\|x^*\|$,
for all $x^{**} \in M$ and $x^{*} \in L$.
\end{theorem}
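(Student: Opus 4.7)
The first observation is that every finite-dimensional Archimedean vector lattice is lattice isomorphic to $\mathbb{R}^n$ with coordinate-wise ordering, so $M$ admits a normalized basis $e_1,\dots,e_n$ of pairwise disjoint positive vectors in $X^{**}$. The problem then reduces to producing pairwise disjoint $f_1,\dots,f_n\in X_+$ such that the linear extension $R(e_i):=f_i$ satisfies the norm bounds (i) and the duality (ii).

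The second step is to enlarge $L$ to a finite-dimensional subspace $L'\subseteq X^{*}$ that quantitatively witnesses this disjointness. Since $e_i\wedge e_j=0$ in $X^{**}$ for $i\neq j$, order-separation in the bidual yields, for each pair $i\neq j$, a positive functional $x^{*}_{ij}\in (X^{*})_{+}$ whose pairing with $e_i$ is close to $1$ while its pairing with $e_j$ is arbitrarily small; I would also adjoin normalizing positive functionals for each $e_i$.

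Now I would apply the classical (Banach-space) principle of local reflexivity to $M\subseteq X^{**}$ and $L'\subseteq X^{*}$ with a tolerance $\delta\ll\epsilon$; this produces a linear isomorphism $R_0\colon M\to X$ with $\|R_0\|,\|R_0^{-1}\|\leq 1+\delta$ and duality to within $\delta$ on $L'$. Setting $y_i:=R_0 e_i$, the witnesses chosen in $L'$ force $\|y_i^{+}\wedge y_j^{+}\|=O(\delta)$ for $i\neq j$, so a disjointification in $X$, for instance $f_i:=\bigl(y_i^{+}-\sum_{j\neq i}y_i^{+}\wedge y_j^{+}\bigr)^{+}$ (iterated if necessary), yields pairwise disjoint positive $f_i\in X$ with $\|f_i-y_i\|=O(\delta)$. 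Defining $Re_i:=f_i$ and extending linearly, the disjointness of the image basis makes $R$ a lattice homomorphism onto a finite-dimensional sublattice of $X$; the norm bounds (i) follow from $\|R-R_0\|=O(\delta)$ combined with $\|R_0\|,\|R_0^{-1}\|\leq 1+\delta$, and the duality (ii) on $L\subseteq L'$ follows because $R$ is an $O(\delta)$-perturbation of $R_0$, all by choosing $\delta$ small enough.

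The main obstacle will be the quantitative disjointification: translating the order-theoretic disjointness of $(e_i)$ in $X^{**}$ into norm-smallness of $y_i^{+}\wedge y_j^{+}$ in $X$, uniformly across all pairs, requires the witnesses in $L'$ to be chosen with just the right quantitative strength, and the iterative removal of overlaps must be controlled globally so that the final $f_i$ remain close to $y_i$. The fact that $X$ itself need not be order complete (only $X^{**}$ is) rules out some of the cleaner disjointification tricks available in Lemma \ref{1.4}, which is presumably why Bernau's original argument relies on a more delicate interplay between the order structures of $X$ and $X^{**}$.
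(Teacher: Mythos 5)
First, note that the paper does not prove this statement at all: it is quoted verbatim from Bernau \cite{Be} (see also Conroy--Moore \cite{CM}) as a known principle of local reflexivity for Banach lattices, so your proposal has to stand on its own. It does not, because its central step is false. You apply the classical (Banach-space) principle of local reflexivity to $M$ and an enlarged space $L'$ containing finitely many positive ``disjointness witnesses'', and then claim that these witnesses force $\|y_i^+\wedge y_j^+\|=O(\delta)$ for $y_i=R_0e_i$. A finite family of functionals, constrained only through the almost-isometry and the duality condition of the classical PLR, cannot control the norm of $y_i^+\wedge y_j^+$, which depends on the action of \emph{all} positive functionals. Concretely, take $X=C[0,1]$, let $e_1,e_2$ be disjoint positive norm-one bumps supported in $[0,0.35]$ and $[0.65,1]$ (so $\|ae_1+be_2\|=\max(|a|,|b|)$), and define $f,g\in C[0,1]_+$ with $f\equiv 1$ on $[0,0.35]$, $f\equiv\frac12$ on $[0.4,0.6]$, $f\equiv 0$ on $[0.65,1]$, and $g\equiv 0$, $g\equiv\frac12$, $g\equiv 1$ on the same three regions (interpolating continuously). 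Then $e_i\mapsto f,g$ is an \emph{exact} isometry of $\mathrm{span}\{e_1,e_2\}$ into $X$, it satisfies the duality condition exactly against natural witnesses such as $\delta_{0.1},\delta_{0.9}$ (and against any finite set of measures, by placing the ``middle'' region suitably), yet $\|f\wedge g\|=\tfrac12$. So the PLR output $R_0$, even with your enlarged $L'$, need not send disjoint elements to nearly disjoint ones, and the subsequent disjointification cannot keep $\|f_i-y_i\|=O(\delta)$; the whole perturbation scheme collapses at this point.

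There is also a smaller, fixable gap earlier: producing the witnesses $x^*_{ij}\in (X^*)_+$ is not immediate, since the dual Riesz--Kantorovich formula applied to $e_i\wedge e_j=0$ in $X^{**}$ decomposes a given $x^*\in X^*_+$ inside $X^{***}$, not inside $X^*$; one must then invoke a positive Goldstine-type fact (weak$^*$ density of $B_{X^*}\cap X^*_+$ in $B_{X^{***}}\cap X^{***}_+$) to return to $X^*$. But even with that repaired, the main obstruction above remains: the lattice structure has to enter the construction of the operator itself (as in Bernau's and Conroy--Moore's arguments), not be retrofitted onto an arbitrary operator supplied by the classical principle of local reflexivity.
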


Now we are in a position to give the main result of this section.

\begin{theorem}\label{1.13}
Suppose that $E$ has the $AP$ or $X^{*}$ has the $PMAP$. Then $$\Upsilon_{p^{*}}(E,X^{**})=(\widetilde{\mathcal{N}}^{p}(X,E))^{*}.$$
\end{theorem}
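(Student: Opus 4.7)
The strategy is to show that the trace pairing
\[
\Phi : \Upsilon_{p^{*}}(E,X^{**}) \to (\widetilde{\mathcal{N}}^{p}(X,E))^{*}, \qquad \Phi(S)(T) := \sum_{n=1}^{\infty} \langle Su_{n}, x^{*}_{n}\rangle
\]
(evaluated on any positively $p$-nuclear representation $T = \sum_{n} x^{*}_{n}\otimes u_{n}$) is a surjective isometry. The hypothesis that $E$ has the $AP$ or $X^{*}$ has the $PMAP$ will enter only through Lemma \ref{1.2}, which is the main technical input: that lemma together with the remark following its proof tells us that $\Phi(S)$ is well-defined on $\widetilde{\mathcal{N}}^{p}(X,E)$ and satisfies $|\Phi(S)(T)| \leq \|S\|_{\Upsilon_{p^{*}}}\widetilde{\nu}^{p}(T)$, so $\Phi$ is a norm-one map into the dual.

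For the reverse norm inequality I will use the $\ell_{p}$--$\ell_{p^{*}}$ duality trick. Given $\epsilon>0$, I pick finite families $(u_{j})_{j=1}^{n}\subset B_{E}$ and $(x^{*}_{j})_{j=1}^{n}\subset (X^{*})_{+}$ with $\|(x^{*}_{j})\|_{p^{*}}^{w}\leq 1$ that almost realize the supremum in Definition \ref{1.80}, and I choose $(\alpha_{j}) \in B_{\ell_{p}^{n}}$ attaining the $\ell_{p^{*}}$-norm of $(\langle Su_{j}, x^{*}_{j}\rangle)_{j}$ via the duality $\ell_{p^{*}} = (\ell_{p})^{*}$. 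Then $T := \sum_{j} x^{*}_{j}\otimes (\alpha_{j} u_{j})$ is a finite-rank positively $p$-nuclear operator with $\widetilde{\nu}^{p}(T) \leq 1$, and $\Phi(S)(T) = \sum_{j}\alpha_{j}\langle Su_{j}, x^{*}_{j}\rangle > \|S\|_{\Upsilon_{p^{*}}} - 2\epsilon$.

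For surjectivity, given $\varphi \in (\widetilde{\mathcal{N}}^{p}(X,E))^{*}$, the rank-one bound $\widetilde{\nu}^{p}(x^{*}\otimes u) \leq \|x^{*}\|\|u\|$ (for $x^{*}\geq 0$) lets me define a bounded bi-additive map $B_{0} : (X^{*})_{+}\times E \to \mathbb{R}$ by $B_{0}(x^{*},u) := \varphi(x^{*}\otimes u)$. Additivity in the first slot follows from $(x^{*}_{1}+x^{*}_{2})\otimes u = x^{*}_{1}\otimes u + x^{*}_{2}\otimes u$ plus linearity of $\varphi$, so $B_{0}$ extends via the decomposition $x^{*} = x^{*}_{+}-x^{*}_{-}$ to a bilinear form $B$ on $X^{*}\times E$ with $|B(x^{*},u)| \leq 2\|\varphi\|\|x^{*}\|\|u\|$. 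Setting $\langle Su, x^{*}\rangle := B(x^{*},u)$ then defines a bounded operator $S : E \to X^{**}$. Running the duality argument above in reverse yields $\|S\|_{\Upsilon_{p^{*}}} \leq \|\varphi\|$: for any $(u_{j})\subset B_{E}$, $(x^{*}_{j})\subset (X^{*})_{+}$ and $(\alpha_{j})\in B_{\ell_{p}^{n}}$, the operator $T := \sum_{j} x^{*}_{j}\otimes(\alpha_{j}u_{j})$ satisfies $\widetilde{\nu}^{p}(T)\leq \|(x^{*}_{j})\|_{p^{*}}^{w}$ while $\varphi(T) = \sum_{j}\alpha_{j}\langle Su_{j}, x^{*}_{j}\rangle$.

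To close, $\Phi(S) = \varphi$ will hold because the two functionals agree on rank-one maps $x^{*}\otimes u$ with $x^{*}\geq 0$ by construction of $B$, hence on all finite-rank positively $p$-nuclear operators by linearity; for general $T = \sum_{n}x^{*}_{n}\otimes u_{n}$, the partial sums $T_{N}$ converge to $T$ in $\widetilde{\nu}^{p}$-norm because $\widetilde{\nu}^{p}(T-T_{N}) \leq \|(x^{*}_{n})\|_{p^{*}}^{w}\|(u_{n})_{n>N}\|_{p}\to 0$, and continuity of both functionals concludes the identification. I expect the principal obstacle to be already absorbed into Lemma \ref{1.2}, where the approximation-property hypothesis is consumed; the remaining steps are linear bookkeeping together with the standard $\ell_{p}$-duality trick.
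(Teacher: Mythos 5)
Your overall architecture is the same as the paper's: the trace pairing together with Lemma \ref{1.2} gives the easy inequality $|\mathrm{trace}(ST)|\leq\|S\|_{\Upsilon_{p^{*}}}\widetilde{\nu}^{p}(T)$, the functional $\varphi$ is turned into an operator $S$ through its values on rank-one maps, the coefficient trick $T=\sum_{j}x^{*}_{j}\otimes(\alpha_{j}u_{j})$ with $(\alpha_{j})\in B_{\ell_{p}^{n}}$ is exactly the paper's choice, and the identification $\varphi=\Phi(S)$ via $\widetilde{\nu}^{p}$-density of finite-rank operators is also the paper's last step. However, there is a genuine gap at the decisive point. The operator $S$ you construct maps $E$ into $X^{**}$, so Definition \ref{1.80} requires the majorizing inequality to be verified against finite families $(x^{***}_{j})$ in $(X^{***})_{+}$, the positive cone of the dual of the \emph{target} lattice $X^{**}$, not against families in $(X^{*})_{+}$. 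Both your ``reverse norm inequality'' and your surjectivity step test $S$ only against $(x^{*}_{j})\subset(X^{*})_{+}$; for genuine $x^{***}_{j}\in(X^{***})_{+}$ the tensor $\sum_{j}x^{***}_{j}\otimes(\alpha_{j}u_{j})$ is not even an element of $\widetilde{\mathcal{N}}^{p}(X,E)$, so $\varphi$ cannot be evaluated on it. Your claim that families from $(X^{*})_{+}$ ``almost realize the supremum in Definition \ref{1.80}'' is precisely the nontrivial assertion: it does not follow from weak$^{*}$ density of $(B_{X^{*}})_{+}$ in $(B_{X^{***}})_{+}$, because one must simultaneously preserve positivity, the pairings $\langle x^{***}_{j},Su_{j}\rangle$, and the joint weak $\ell_{p^{*}}$-norm of the whole family.

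This reduction from $(X^{***})_{+}$ to $(X^{*})_{+}$ is where the paper's proof does its real work: it applies Lemma \ref{1.4} (Lissitsin--Oja, using order completeness of the dual lattice $X^{***}$) to place $\mathrm{span}\{x^{***}_{j}\}$ almost inside a finite-dimensional sublattice by means of a positive projection $P$, and then Bernau's principle of local reflexivity for Banach lattices (Theorem \ref{1.3}) to move that sublattice into $X^{*}$ by a lattice isomorphism $R$, so that $x^{*}_{j}:=RPx^{***}_{j}\geq0$, the pairings with the $Su_{j}$ change by at most $\epsilon$, and $\|(x^{*}_{j})\|^{w}_{p^{*}}\leq(1+\epsilon)^{2}\|(x^{***}_{j})\|^{w}_{p^{*}}$; only after this substitution does your coefficient trick apply. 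Note also that you locate the ``principal obstacle'' in Lemma \ref{1.2}; that lemma (where the $AP$/$PMAP$ hypothesis is indeed consumed) only yields the inequality $\|\Phi(S)\|\leq\|S\|_{\Upsilon_{p^{*}}}$, while the hard half is the positive local-reflexivity reduction just described, which your outline omits entirely. The remaining bookkeeping in your proposal (the rank-one bound, bilinearity of $\varphi(x^{*}\otimes u)$, density of finite-rank operators) is correct and matches the paper.
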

\begin{proof}
Let us define an operator $$V:\Upsilon_{p^{*}}(E,X^{**})\rightarrow(\widetilde{\mathcal{N}}^{p}(X,E))^{*}$$ by $$S\mapsto V_{S}(T)=\textrm{trace}(ST), \quad S \in\Upsilon_{p^{*}}(E,X^{**}), T\in \widetilde{\mathcal{N}}^{p}(X,E).$$
Then $\|V_{S}\|\leq \|S\|_{\Upsilon_{p^{*}}}$.

Let $\varphi\in (\widetilde{\mathcal{N}}^{p}(X,E))^{*}$. We define an operator $S:E\rightarrow X^{**}$ by $\langle Su,x^{*}\rangle=\langle\varphi, x^{*}\otimes u\rangle$ for $u\in E, x^{*}\in X^{*}$.

We claim that $S$ is positive $p^{*}$-majorizing.

Given any $u_{1},u_{2},\cdots,u_{n}$ in $B_{E}$ and $x^{***}_{1},x^{***}_{2},\cdots,x^{***}_{n}$ in $(X^{***})_{+}$. Let $\epsilon>0$. We set $M=\textrm{span}\{Su_{j}:1\leq j\leq n\}$ and $L=\textrm{span}\{x^{***}_{j}:1\leq j\leq n\}$. It follows from Lemma \ref{1.4} that there exist a sublattice $Z$ of $X^{***}$ containing $L$, a finite-dimensional sublattice $G$ of $Z$ and a positive projection $P$ from $Z$ onto $G$ such that $\|Px^{***}-x^{***}\|\leq \epsilon\|x^{***}\|$ for all $x^{***}\in L$. By Theorem \ref{1.3}, we get a lattice isomorphism $R$ from $G$ into $X^{*}$ such that
$\|R\|, \|R^{-1}\| \leq 1+\epsilon$ and

\begin{align}\label{4}
|\langle x^{***},x^{**}\rangle-\langle x^{**},Rx^{***}\rangle| \leq \epsilon\|x^{***}\|\|x^{**}\|,
\end{align}
for all $x^{***} \in G, x^{**} \in M.$
Let $x^{*}_{j}=RPx^{***}_{j}\geq 0(j=1,2,\cdots,n)$. We choose $(\lambda_{j})_{j=1}^{n}$ such that $\sum\limits_{j=1}^{n}|\lambda_{j}|^{p}=1$ and

$$(\sum_{j=1}^{n}|\langle Su_{j},x^{*}_{j}\rangle|^{p^{*}})^{\frac{1}{p^{*}}}=\sum_{j=1}^{n}\lambda_{j}\langle Su_{j},x^{*}_{j}\rangle.$$

Let $T=\sum\limits_{j=1}^{n}x^{*}_{j}\otimes \lambda_{j}u_{j}\in \widetilde{\mathcal{N}}^{p}(X,E)$. Then we have
\begin{align}\label{3}
(\sum_{j=1}^{n}|\langle Su_{j},x^{*}_{j}\rangle|^{p^{*}})^{\frac{1}{p^{*}}}&=\langle \varphi,T\rangle \nonumber \\
&\leq \|\varphi\|\widetilde{\nu}^{p}(T) \nonumber \\
&\leq \|\varphi\|\|(x^{*}_{j})_{j=1}^{n}\|^{w}_{p^{*}} \nonumber \\
&\leq \|\varphi\|(1+\epsilon)^{2}\|(x^{***}_{j})_{j=1}^{n}\|^{w}_{p^{*}}
\end{align}
By (\ref{4}), we get

\begin{align}\label{5}
(\sum_{j=1}^{n}|\langle x^{***}_{j},Su_{j}\rangle-\langle Su_{j},x^{*}_{j}\rangle|^{p^{*}})^{\frac{1}{p^{*}}}&\leq
(\sum_{j=1}^{n}|\langle x^{***}_{j},Su_{j}\rangle-\langle Px^{***}_{j},Su_{j}\rangle|^{p^{*}})^{\frac{1}{p^{*}}} \nonumber \\
&+(\sum_{j=1}^{n}|\langle Px^{***}_{j},Su_{j}\rangle-\langle Su_{j},x^{*}_{j}\rangle|^{p^{*}})^{\frac{1}{p^{*}}} \nonumber \\
&\leq \epsilon \|S\|(\sum_{j=1}^{n}\|x^{***}_{j}\|^{p^{*}})^{\frac{1}{p^{*}}}+\epsilon (1+\epsilon)\|S\|(\sum_{j=1}^{n}\|x^{***}_{j}\|^{p^{*}})^{\frac{1}{p^{*}}}
\end{align}

Combining (\ref{3}) and (\ref{5}), we get
\begin{align*}
(\sum_{j=1}^{n}|\langle x^{***}_{j},Su_{j}\rangle|^{p^{*}})^{\frac{1}{p^{*}}}
&\leq (\sum_{j=1}^{n}|\langle Su_{j},x^{*}_{j}\rangle|^{p^{*}})^{\frac{1}{p^{*}}}+(\sum_{j=1}^{n}|\langle x^{***}_{j},Su_{j}\rangle-\langle Su_{j},x^{*}_{j}\rangle|^{p^{*}})^{\frac{1}{p^{*}}}\\
&\leq \|\varphi\|(1+\epsilon)^{2}\|(x^{***}_{j})_{j=1}^{n}\|^{w}_{p^{*}}+\epsilon (2+\epsilon)\|S\|(\sum_{j=1}^{n}\|x^{***}_{j}\|^{p^{*}})^{\frac{1}{p^{*}}}
\end{align*}
Letting $\epsilon\rightarrow 0$, we get $$(\sum_{j=1}^{n}|\langle x^{***}_{j},Su_{j}\rangle|^{p^{*}})^{\frac{1}{p^{*}}}\leq \|\varphi\|\|(x^{***}_{j})_{j=1}^{n}\|^{w}_{p^{*}},$$ which implies that $S$ is positive $p^{*}$-majorizing and $\|S\|_{\Upsilon_{p^{*}}}\leq \|\varphi\|$.

By the definition of the operator $S$, we see that $\langle\varphi,T\rangle=V_{S}(T)$ for all $T\in \mathcal{F}(X,E)$. Since $\mathcal{F}(X,E)$ is $\widetilde{\nu}^{p}$-dense in $\widetilde{\mathcal{N}}^{p}(X,E)$, it follows that $\varphi=V_{S}$. Hence the operator $V$ is a surjective linear isometry.

\end{proof}

\begin{corollary}
Suppose that $E^{***}$ has the $AP$ or $X^{*}$ has the $PMAP$. If the operator $T:X\rightarrow E$ has a latticially $p$-nuclear adjoint, then $T$ is positively $p$-nuclear and $\widetilde{\nu}^{p}(T)=\widetilde{\nu}_{p}(T^{*}).$
\end{corollary}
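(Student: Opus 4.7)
The plan is to apply Theorem \ref{1.13} via trace duality. A classical theorem of Grothendieck states that if $F^{*}$ has the $AP$ then so does $F$; iterating, $E^{***}$ AP implies $E^{**},E^{*},E$ all have the $AP$. Thus under either hypothesis Theorem \ref{1.13} yields the isometric identification $V:\Upsilon_{p^{*}}(E,X^{**})\cong(\widetilde{\mathcal{N}}^{p}(X,E))^{*}$ with $V_{S}(T')=\mathrm{trace}(ST')$. Since Proposition \ref{1.1}(f) already gives $\widetilde{\nu}_{p}(T^{*})\leq\widetilde{\nu}^{p}(T)$ once $T\in\widetilde{\mathcal{N}}^{p}(X,E)$ is known, the task reduces to producing $T\in\widetilde{\mathcal{N}}^{p}(X,E)$ with $\widetilde{\nu}^{p}(T)\leq\widetilde{\nu}_{p}(T^{*})$.

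Fix a latticially $p$-nuclear representation $T^{*}=\sum_{j}e_{j}^{**}\otimes x_{j}^{*}$ with $\|(e_{j}^{**})\|_{p}\|(x_{j}^{*})\|_{p^{*}}^{w}\leq(1+\epsilon)\widetilde{\nu}_{p}(T^{*})$, and define the candidate functional
$$\phi_{T}(S):=\sum_{j=1}^{\infty}\langle e_{j}^{**},S^{*}J_{X^{*}}x_{j}^{*}\rangle,\qquad S\in\Upsilon_{p^{*}}(E,X^{**}).$$
By Theorem \ref{1.6}(b), $S^{*}J_{X^{*}}:X^{*}\to E^{*}$ is positive $p^{*}$-summing with $\|S^{*}J_{X^{*}}\|_{\Lambda_{p^{*}}}\leq\|S\|_{\Upsilon_{p^{*}}}$; applied to the positive weakly $p^{*}$-summable sequence $(x_{j}^{*})$ together with H\"older, the series converges absolutely with $|\phi_{T}(S)|\leq(1+\epsilon)\widetilde{\nu}_{p}(T^{*})\|S\|_{\Upsilon_{p^{*}}}$. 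Independence of $\phi_{T}$ from the chosen representation is the direct analogue of Lemma \ref{1.2}: Case~1 is repeated verbatim using that $E^{**}$ has the $AP$ (a consequence of $E^{***}$ AP) in place of $E$, applied to the sequence $(e_{n}^{**})_{n}\in\ell_{p}(E^{**})$; Case~2 uses $X^{*}$ PMAP exactly as in the proof of Lemma \ref{1.2}.

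Next, recognize $\phi_{T}$ as coming from an element of the predual $\widetilde{\mathcal{N}}^{p}(X,E)\hookrightarrow\Upsilon_{p^{*}}(E,X^{**})^{*}$, not merely the bidual. Use the principle of local reflexivity: for each $n$ choose a $(1+\epsilon_{n})$-isomorphism $R_{n}$ of $\mathrm{span}(e_{1}^{**},\ldots,e_{n}^{**})$ into $E$, and set $\widetilde{T}_{n}:=\sum_{j\leq n}x_{j}^{*}\otimes R_{n}(e_{j}^{**})\in\mathcal{F}(X,E)\subset\widetilde{\mathcal{N}}^{p}(X,E)$. Then $\widetilde{\nu}^{p}(\widetilde{T}_{n})\leq(1+\epsilon_{n})\|(x_{j}^{*})_{j\leq n}\|_{p^{*}}^{w}\|(e_{j}^{**})_{j\leq n}\|_{p}$, and by choosing the finite-dimensional subspace $L\subset E^{*}$ in local reflexivity in coordination with the AP/PMAP hypothesis, the functionals $S\mapsto V_{S}(\widetilde{T}_{n})$ converge to $\phi_{T}$ in the operator norm on $\Upsilon_{p^{*}}(E,X^{**})^{*}$. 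Since $\widetilde{\mathcal{N}}^{p}(X,E)$ is norm-closed in its bidual, this forces $\phi_{T}\in\widetilde{\mathcal{N}}^{p}(X,E)$ with $\widetilde{\nu}^{p}(\phi_{T})=\|\phi_{T}\|\leq\widetilde{\nu}_{p}(T^{*})$.

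To identify this predual element with $T$, test $\phi_{T}$ on the rank-one positive $p^{*}$-majorizing operator $S=\alpha\otimes J_{X}x$ for $\alpha\in E^{*}$ and $x\in X$: a direct computation shows $S^{*}J_{X^{*}}x_{j}^{*}=\langle x_{j}^{*},x\rangle\alpha$, whence $\phi_{T}(S)=\langle J_{E}Tx,\alpha\rangle=\langle\alpha,Tx\rangle$, whereas $V_{S}(\widetilde{T})=\langle\alpha,\widetilde{T}x\rangle$; hence $\widetilde{T}=T$, proving $\widetilde{\nu}^{p}(T)=\|\phi_{T}\|\leq\widetilde{\nu}_{p}(T^{*})$ and completing the equality with Proposition \ref{1.1}(f). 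The main technical obstacle lies in the last approximation step: local reflexivity delivers only pairing-small errors on a prescribed finite-dimensional $L\subset E^{*}$, so upgrading to uniformity of $V_{\widetilde{T}_{n}}\to\phi_{T}$ over the unit ball of $\Upsilon_{p^{*}}(E,X^{**})$ is precisely where the $AP$ of $E^{***}$, respectively the PMAP of $X^{*}$ (through Lemma \ref{3.8}), enters in an essential way.
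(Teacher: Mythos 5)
Your overall strategy (build a trace functional $\phi_{T}$ on $\Upsilon_{p^{*}}(E,X^{**})$ from a latticially $p$-nuclear representation of $T^{*}$ and then argue it lies in the canonical image of the predual $\widetilde{\mathcal{N}}^{p}(X,E)$) differs from the paper, but it breaks down at exactly the step you flag as the ``main technical obstacle,'' and that step is not a technicality one can defer: it is the whole content of the corollary. You claim that, with local reflexivity applied to $\mathrm{span}(e_{1}^{**},\dots,e_{n}^{**})$, the functionals $V_{\widetilde{T}_{n}}$ converge to $\phi_{T}$ \emph{in norm} on $\Upsilon_{p^{*}}(E,X^{**})^{*}$. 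Since the identification of Theorem \ref{1.13} is isometric, norm convergence of $V_{\widetilde{T}_{n}}$ is equivalent to $(\widetilde{T}_{n})$ being $\widetilde{\nu}^{p}$-Cauchy in $\widetilde{\mathcal{N}}^{p}(X,E)$, and there is no reason for that: already for a single index $j$, the difference $x_{j}^{*}\otimes(R_{n}e_{j}^{**}-R_{m}e_{j}^{**})$ has $\widetilde{\nu}^{p}$-norm $\|x_{j}^{*}\|\,\|R_{n}e_{j}^{**}-R_{m}e_{j}^{**}\|$, and local reflexivity gives no norm convergence of $R_{n}e_{j}^{**}$ in $E$ (it cannot, since $e_{j}^{**}$ need not lie in $J_{E}E$). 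Local reflexivity only controls pairings against a \emph{prescribed} finite-dimensional $L\subset E^{*}$, whereas uniformity over $\{S:\|S\|_{\Upsilon_{p^{*}}}\le 1\}$ requires controlling $\langle e_{j}^{**},S^{*}J_{X^{*}}x_{j}^{*}\rangle$ for functionals $S^{*}J_{X^{*}}x_{j}^{*}$ that range over an infinite-dimensional set depending on $S$; neither the AP of $E^{***}$ (which yields finite-rank approximants mapping into $E^{**}$, not into $E$) nor the PMAP of $X^{*}$ (Lemma \ref{3.8} produces $R\in\mathcal{F}_{+}(X)$, so $S^{*}J_{X^{*}}R^{*}x_{j}^{*}$ still lives in a subspace depending on $S$) supplies a mechanism for the claimed uniform estimate. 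So the membership $\phi_{T}\in\widetilde{\mathcal{N}}^{p}(X,E)$ is asserted, not proved, and the proposed route to it would in effect show $T$ is a $\widetilde{\nu}^{p}$-limit of finite-rank operators built from local reflexivity, which is false in general as a method.

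For contrast, the paper avoids producing any norm-convergent sequence: it argues by contradiction and separation. From Proposition \ref{1.1}(e), $T^{**}$ and hence $J_{E}T=T^{**}J_{X}$ is positively $p$-nuclear; if $T$ were not, then $J_{E}T\in\widetilde{\mathcal{N}}^{p}(X,E^{**})\setminus J_{E}\widetilde{\mathcal{N}}^{p}(X,E)$, and Theorem \ref{1.13} (applied with $E^{**}$, which inherits the AP from $E^{***}$) together with Hahn--Banach yields $S\in\Upsilon_{p^{*}}(E^{**},X^{**})$ with $\mathrm{trace}(SJ_{E}T)=1$ and $SJ_{E}=0$; one then reaches $1=0$ by testing against a latticially $p$-nuclear representation of $T^{*}$, using either a nuclear-trace argument with the AP of $E^{***}$ or the PMAP of $X^{*}$ as in Lemma \ref{1.2}. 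Your reduction of the norm equality to Proposition \ref{1.1}(f) plus one inequality is fine, and your preliminary observations (AP passes from $E^{***}$ down to $E$; boundedness of $\phi_{T}$ via Theorem \ref{1.6}) are correct, but as it stands the proof has a genuine gap at its central step.
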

\begin{proof}
Suppose that $T$ is not positively $p$-nuclear. Since $T^{*}$ is latticially $p$-nuclear, it follows from Proposition \ref{1.1}(e) that $T^{**}$ is positively $p$-nuclear and so is $T^{**}J_{X}=J_{E}T$. Hence $J_{E}T\in \widetilde{\mathcal{N}}^{p}(X,E^{**})\setminus \widetilde{\mathcal{N}}^{p}(X,E).$
Since $E^{***}$ has the $AP$, $E^{**}$ has the $AP$.
By Theorem \ref{1.13} and the Hahn-Banach Theorem,  we get an operator $S\in \Upsilon_{p^{*}}(E^{**},X^{**})$ such that $\textrm{trace}(SJ_{E}T)=1$ and $\textrm{trace}(SJ_{E}R)=0$ for all $R\in \widetilde{\mathcal{N}}^{p}(X,E).$ This yields that $SJ_{E}u=0$ for all $u\in E$. Let us take any latticially $p$-nuclear representation $T^{*}=\sum\limits_{n=1}^{\infty}u^{**}_{n}\otimes x^{*}_{n}$. Since $SJ_{E}u=0$ for all $u\in E$, we get $\sum\limits_{n=1}^{\infty}\langle x^{*}_{n},x\rangle Su^{**}_{n}=0$ for all $x\in X$. Moreover, for every $x^{*}\in X^{*}$, we have
$$0=\langle SJ_{E}u, x^{*}\rangle=\langle J^{*}_{E}S^{*}x^{*},u\rangle.$$
By Goldstine-Weston Theorem, we get
\begin{align}\label{505}
\langle J^{**}_{E}u^{**},S^{*}x^{*}\rangle=\langle u^{**},J^{*}_{E}S^{*}x^{*}\rangle=0.
\end{align}
Note that
\begin{align}\label{506}
1=\textrm{trace}(SJ_{E}T)=\textrm{trace}(ST^{**}J_{X})=\sum\limits_{n=1}^{\infty}\langle Su^{**}_{n},x^{*}_{n}\rangle.
\end{align}
It follows from Theorem \ref{1.6} that
$$\sum\limits_{n=1}^{\infty}\|u^{**}_{n}\|\|S^{*}x^{*}_{n}\|\leq \|(u^{**}_{n})_{n}\|_{p}\|S\|_{\Upsilon_{p^{*}}}\|(x^{*}_{n})_{n}\|_{p^{*}}^{w}<\infty.$$

In the case $X^{*}$ has the $PMAP$, an argument analogous to that of Lemma \ref{1.2} Case 2 shows that
$\sum\limits_{n=1}^{\infty}\langle Su^{**}_{n},x^{*}_{n}\rangle=0,$ which contradicts with (\ref{506}). It remains to prove the conclusion in the case
$E^{***}$ has the $AP$.

We define an operator
$$V:=S^{*}J_{X^{*}}T^{*}J_{E}^{*}: E^{***}\stackrel{J^{*}_{E}}{\longrightarrow}E^{*}\stackrel{T^{*}}{\longrightarrow}X^{*}\stackrel{J_{X^{*}}}{\longrightarrow}X^{***}\stackrel{S^{*}}{\longrightarrow}
E^{***}.$$
It is easy to see that $V=\sum\limits_{n=1}^{\infty}u^{**}_{n}\otimes S^{*}x^{*}_{n}$ is nuclear. Furthermore, for $u^{***}\in E^{***}, u^{**}\in E^{**}$, we get
\begin{align*}
\langle Vu^{***},u^{**}\rangle &=\langle S^{*}J_{X^{*}}T^{*}J_{E}^{*}u^{***},u^{**}\rangle\\
&=\sum\limits_{n=1}^{\infty}\langle u^{**}_{n},J^{*}_{E}u^{***}\rangle \langle S^{*}x^{*}_{n},u^{**}\rangle\\
&=\sum\limits_{n=1}^{\infty}\langle J^{**}_{E}u^{**}_{n},u^{***}\rangle \langle S^{*}x^{*}_{n},u^{**}\rangle.\\
\end{align*}
Therefore, $V=\sum\limits_{n=1}^{\infty}J^{**}_{E}u^{**}_{n}\otimes S^{*}x^{*}_{n}, \sum\limits_{n=1}^{\infty}\|J^{**}_{E}u^{**}_{n}\|\|S^{*}x^{*}_{n}\|<\infty.$
Since $E^{***}$ has the $AP$, we get, by (\ref{506}),
$$\sum\limits_{n=1}^{\infty}\langle J^{**}_{E}u^{**}_{n},S^{*}x^{*}_{n}\rangle=\sum\limits_{n=1}^{\infty}\langle S^{*}x^{*}_{n},u^{**}_{n}\rangle
=\sum\limits_{n=1}^{\infty}\langle Su^{**}_{n},x^{*}_{n}\rangle=1.$$
This contradicts with (\ref{505}).

In conclusion, we have proved in both cases that if $J_{E}T$ is positively $p$-nuclear, then so is $T$. Since $\widetilde{\mathcal{N}}^{p}(X,E)$ is a closed subspace of $\widetilde{\mathcal{N}}^{p}(X,E^{**})$ under the canonical mapping $J_{E}$, we get
$$\widetilde{\nu}^{p}(T)=\widetilde{\nu}^{p}(J_{E}T)=\widetilde{\nu}^{p}(T^{**}J_{X})\leq \widetilde{\nu}^{p}(T^{**})\leq \widetilde{\nu}_{p}(T^{*}).$$

\end{proof}

\begin{theorem}
Suppose that $E^{*}$ has the $AP$ or $X^{****}$ has the $PMAP$. If the operator $S:E\rightarrow X$ has a positively $p$-nuclear adjoint, then $S$ is latticially $p$-nuclear and $\widetilde{\nu}_{p}(S)=\widetilde{\nu}^{p}(S^{*}).$
\end{theorem}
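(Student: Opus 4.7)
My strategy mirrors the preceding corollary, with the roles of parts (e) and (f) of Proposition~\ref{1.1} interchanged and all natural duality levels shifted by one. First I will apply Proposition~\ref{1.1}(f) to $T=S^{*}\in\widetilde{\mathcal{N}}^{p}(X^{*},E^{*})$ to obtain $S^{**}\in\widetilde{\mathcal{N}}_{p}(E^{**},X^{**})$ with $\widetilde{\nu}_{p}(S^{**})\le\widetilde{\nu}^{p}(S^{*})$. Starting from a positively $p$-nuclear representation $S^{*}=\sum_{n}\xi_{n}^{**}\otimes\eta_{n}^{*}$ with $(\xi_{n}^{**})_{n}\in l^{w}_{p^{*}}((X^{**})_{+})$ and $(\eta_{n}^{*})_{n}\in l_{p}(E^{*})$, I will unwind this into the latticially $p$-nuclear representation $S^{**}=\sum_{n}J_{E^{*}}\eta_{n}^{*}\otimes\xi_{n}^{**}$, so that (using $J_{E}^{*}J_{E^{*}}=I_{E^{*}}$) the operator $J_{X}S=S^{**}J_{E}:E\to X^{**}$ comes equipped with the latticially $p$-nuclear representation $\sum_{n}\eta_{n}^{*}\otimes\xi_{n}^{**}$.

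\textbf{Hahn--Banach step.} I will then suppose for contradiction that $S\notin\widetilde{\mathcal{N}}_{p}(E,X)$, embed $\widetilde{\mathcal{N}}_{p}(E,X)$ isometrically into $\widetilde{\mathcal{N}}_{p}(E,X^{**})$ by $R\mapsto J_{X}R$, and invoke Zhukova's representation theorem (the $\widetilde{\mathcal{N}}_{p}$-analogue of Theorem~\ref{1.13}) identifying $(\widetilde{\mathcal{N}}_{p}(E,X^{**}))^{*}$ with $\Lambda_{p^{*}}(X^{**},E^{**})$ through the trace pairing. The hypothesis required there---AP of $E^{*}$ or PMAP of $X^{**}$---is supplied directly by AP of $E^{*}$, or by descending PMAP from $X^{****}$ down to $X^{**}$ via Lemma~\ref{3.8} and the principle of local reflexivity (Theorem~\ref{1.3}). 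Hahn--Banach separation then produces $U\in\Lambda_{p^{*}}(X^{**},E^{**})$ with $\textrm{trace}(U\circ J_{X}R)=0$ for every $R\in\widetilde{\mathcal{N}}_{p}(E,X)$ and $\textrm{trace}(U\circ J_{X}S)=1$; testing against rank-one $R=u^{*}\otimes x$ with $u^{*}\in E^{*}$, $x\in X_{+}$ will force $UJ_{X}=0$, and unwinding the second trace identity will give $\sum_{n}\langle U\xi_{n}^{**},\eta_{n}^{*}\rangle=1$.

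\textbf{Closing the contradiction.} In the AP sub-case I will define $W':=J_{E}^{*}(US^{**})^{*}J_{E^{*}}:E^{*}\to E^{*}$. A direct calculation using $US^{**}=\sum_{n}J_{E^{*}}\eta_{n}^{*}\otimes U\xi_{n}^{**}$ gives $W'=\sum_{n}U\xi_{n}^{**}\otimes\eta_{n}^{*}$; since $(\eta_{n}^{*})_{n}\in l_{p}(E^{*})$ and $(U\xi_{n}^{**})_{n}\in l_{p^{*}}(E^{**})$ by Theorem~\ref{1.6} (together with positivity of $U$ on $(X^{**})_{+}$), this is a nuclear representation of $W'$ on $E^{*}$. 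On the other hand $US^{**}J_{E}=UJ_{X}S=0$, so taking adjoints yields $J_{E}^{*}(US^{**})^{*}=0$ and hence $W'=0$ and $\textrm{trace}(W')=0$. Comparing with the nuclear-representation value $\textrm{trace}(W')=\sum_{n}\langle U\xi_{n}^{**},\eta_{n}^{*}\rangle=1$, the AP of $E^{*}$ forces these two values to agree, producing the contradiction. In the PMAP sub-case, after truncating beyond some $N$ (with tail controlled by $(\eta_{n}^{*})_{n}\in l_{p}(E^{*})$), I will use the downward transfer of $X^{****}$ PMAP to construct positive finite-rank operators in $\mathcal{F}_{+}(X^{**})$ whose ranges lie essentially inside $J_{X}(X)$; together with $UJ_{X}=0$, an argument parallel to Case~2 of Lemma~\ref{1.2}, one duality level up, will force the trace to vanish, contradicting the value $1$.

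\textbf{Conclusion and anticipated obstacle.} The contradiction delivers $S\in\widetilde{\mathcal{N}}_{p}(E,X)$; isometry of $R\mapsto J_{X}R$ combined with Proposition~\ref{1.1}(f) gives $\widetilde{\nu}_{p}(S)=\widetilde{\nu}_{p}(J_{X}S)\le\widetilde{\nu}_{p}(S^{**})\le\widetilde{\nu}^{p}(S^{*})$, and the reverse inequality is immediate from Proposition~\ref{1.1}(e). The hardest part will be the duality-level bookkeeping: setting up the auxiliary nuclear operator $W'$ on $E^{*}$ (rather than on $E^{***}$ as in the preceding corollary) so that AP of $E^{*}$ is exactly enough, and in the PMAP case executing the iterated Lemma~\ref{3.8}/local reflexivity descent that turns $X^{****}$ PMAP into the positive finite-rank approximation on $X^{**}$ that the argument actually needs.
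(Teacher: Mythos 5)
Your overall skeleton is the paper's: assume $S\notin\widetilde{\mathcal{N}}_{p}(E,X)$, separate $J_{X}S$ from $J_{X}\widetilde{\mathcal{N}}_{p}(E,X)$ inside $\widetilde{\mathcal{N}}_{p}(E,X^{**})$, represent the separating functional by some $U\in\Lambda_{p^{*}}(X^{**},E^{**})$ with $UJ_{X}=0$ and $\sum_{n}\langle U\xi_{n}^{**},\eta_{n}^{*}\rangle=1$, and derive a contradiction; your norm-equality chain at the end also matches. Your AP sub-case is correct: $W'=\sum_{n}U\xi_{n}^{**}\otimes\eta_{n}^{*}$ is a nuclear operator on $E^{*}$ which equals $0$, and AP of $E^{*}$ forces its trace $\sum_{n}\langle U\xi_{n}^{**},\eta_{n}^{*}\rangle$ to vanish; this is equivalent in substance to the paper's ``argue as in Lemma~\ref{1.2}, Case~1'' (via AP of $E^{*}$ with conjugate operators). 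One wrinkle: you route the hypotheses through the representation theorem for $(\widetilde{\mathcal{N}}_{p}(E,X^{**}))^{*}$ and propose descending PMAP from $X^{****}$ to $X^{**}$; the paper never needs this, because the direction of Zhukova's theorem actually used (every functional comes from some $T\in\Lambda_{p^{*}}$ and acts by trace on every representation) requires no approximation hypotheses, exactly as in the second half of the proof of Theorem~\ref{1.13}; your two-step descent is an extra unproved claim, though not the main problem.

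The genuine gap is your PMAP sub-case. The plan of truncating and producing positive finite-rank operators $A\in\mathcal{F}_{+}(X^{**})$ ``whose ranges lie essentially inside $J_{X}(X)$'' cannot close the argument: if $A$ has range in $J_{X}(X)$ then $\langle UA\xi_{n}^{**},\eta_{n}^{*}\rangle=0$ automatically, so you must either make $\|\xi_{n}^{**}-A\xi_{n}^{**}\|$ small, which is impossible when $\xi_{n}^{**}\notin J_{X}(X)$ (its distance to $J_{X}(X)$ is an obstruction no approximation property removes), or control $\langle\xi_{n}^{**}-A\xi_{n}^{**},U^{*}J_{E^{*}}\eta_{n}^{*}\rangle$, which is exactly the quantity $\langle U\xi_{n}^{**},\eta_{n}^{*}\rangle$ you are trying to annihilate --- the estimate is circular. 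Nothing in your sketch explains, either, why the hypothesis is PMAP of $X^{****}$ rather than of $X^{**}$ or $X^{***}$. The paper's mechanism at this point is different: it passes to $S^{*}J_{X}^{*}:X^{***}\to E^{*}$, which admits \emph{two} positively $p$-nuclear representations, $\sum_{n}J_{X}^{**}\xi_{n}^{**}\otimes\eta_{n}^{*}$ and $\sum_{n}J_{X^{**}}\xi_{n}^{**}\otimes\eta_{n}^{*}$ (both families of functionals are positive in $X^{****}$), and pairs them with the positive $p^{*}$-majorizing operator built from $U^{*}J_{E^{*}}$; the independence of the trace from the representation is Lemma~\ref{1.2} applied to the pair $(E^{*},X^{***})$, whose hypothesis is precisely that $X^{****}$ has the PMAP. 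The first representation gives trace $0$ termwise, since $J_{X}^{*}U^{*}J_{E^{*}}=(UJ_{X})^{*}J_{E^{*}}=0$, while the second gives $\sum_{n}\langle U\xi_{n}^{**},\eta_{n}^{*}\rangle=1$, the desired contradiction. Without this device of the two canonical embeddings of $X^{**}$ into $X^{****}$ (or an equivalent), your PMAP branch does not go through.
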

\begin{proof}
Suppose that $S$ is not latticially $p$-nuclear. By \cite[Theorem 3]{Zhu}, there exists an operator $T\in \Lambda_{p^{*}}(X^{**},E^{**})$ such that $\textrm{trace}(TJ_{X}S)=1$ and $\textrm{trace}(TJ_{X}R)=0$ for all $R\in \widetilde{\mathcal{N}}_{p}(E,X)$. This implies that $TJ_{X}x=0$ for all $x\in X$. It follows from Goldstine-Weston Theorem that $\langle J^{**}_{X}x^{**},T^{*}u^{*}\rangle=0$ for all $u^{*}\in E^{*}, x^{**}\in X^{**}$.
Take any positively $p$-nuclear representation $S^{*}=\sum\limits_{n=1}^{\infty}x^{**}_{n}\otimes u^{*}_{n}$. Then, we have
\begin{align}\label{302}
\sum\limits_{n=1}^{\infty}\langle Tx^{**}_{n},u^{*}_{n}\rangle=\textrm{trace}(TS^{**}J_{E})=\textrm{trace}(TJ_{X}S)=1.
\end{align}
Moreover, $\sum\limits_{n=1}^{\infty}\langle u^{*}_{n}, u\rangle Tx^{**}_{n}=0$ for all $u\in E$.

If $E^{*}$ has the $AP$, then $E^{*}$ has the $AP$ with conjugate operators. We argue as in Lemma \ref{1.2} Case 1 to show that
$\sum\limits_{n=1}^{\infty}\langle Tx^{**}_{n},u^{*}_{n}\rangle=0$, which contradicts with (\ref{302}).
Now assume that $X^{****}$ has the $PMAP$.

Let $$U=(T^{*}J_{E^{*}})(S^{*}J_{X}^{*}):X^{***}\stackrel{J^{*}_{X}}{\longrightarrow}X^{*}\stackrel{S^{*}}{\longrightarrow}E^{*}\stackrel{J_{E^{*}}}{\longrightarrow}
E^{***}\stackrel{T^{*}}{\longrightarrow}X^{****}.$$
It is easy to check that $S^{*}J^{*}_{X}=\sum\limits_{n=1}^{\infty}J_{X^{**}}x^{**}_{n}\otimes u^{*}_{n}.$ Combining Lemma \ref{1.2} with Theorem \ref{1.6}, we get
$$0=\sum\limits_{n=1}^{\infty}\langle J_{X}^{**}x^{**}_{n},T^{*}u^{*}_{n}\rangle=\sum\limits_{n=1}^{\infty}\langle J_{X^{**}}x^{**}_{n},T^{*}u^{*}_{n}\rangle
=\sum\limits_{n=1}^{\infty}\langle Tx^{**}_{n},u^{*}_{n}\rangle=1.$$
This is a contradiction.

Therefore, we have proved in both cases that if $J_{X}S$ is latticially $p$-nuclear, so is $S$. Since $\widetilde{\mathcal{N}}_{p}(E,X)$ can be considered to be a closed subspace of $\widetilde{\mathcal{N}}_{p}(E,X^{**})$ under the canonical embedding $J_{X}$, we get
$$\widetilde{\nu}_{p}(S)=\widetilde{\nu}_{p}(J_{X}S)=\widetilde{\nu}_{p}(S^{**}J_{E})\leq \widetilde{\nu}_{p}(S^{**})\leq \widetilde{\nu}^{p}(S^{*}).$$
This completes the proof.
\end{proof}

At the rest of this section, we describe the space of positive $p$-majorizing operators via positively $p$-nuclear operators. First we prove a lemma which is interesting in itself.

\begin{lemma}\label{2.1}
Suppose that $T:X\rightarrow E$ is positively $p$-nuclear and $S:F\rightarrow X$ is positive $p^{*}$-majorizing. Then $TS$ is nuclear and $\nu(TS)\leq \widetilde{\nu}^{p}(T)\|S\|_{\Upsilon_{p^{*}}}$.
\end{lemma}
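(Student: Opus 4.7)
\textbf{Proof plan for Lemma \ref{2.1}.} The strategy is to take a positively $p$-nuclear representation of $T$, compose on the right with $S$ to get a candidate nuclear representation of $TS$, and then use the duality between positive $p^{*}$-majorizing operators and positive $p^{*}$-summing operators (Theorem \ref{1.6}) together with H\"{o}lder's inequality to bound the nuclear norm.

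First I would fix a positively $p$-nuclear representation $T=\sum_{j=1}^{\infty}x^{*}_{j}\otimes u_{j}$ with $(x^{*}_{j})_{j}\in l^{w}_{p^{*}}(X^{*})_{+}$ and $(u_{j})_{j}\in l_{p}(E)$. A direct computation shows that for $f\in F$,
\begin{equation*}
(TS)(f)=\sum_{j=1}^{\infty}\langle x^{*}_{j},Sf\rangle u_{j}=\sum_{j=1}^{\infty}\langle S^{*}x^{*}_{j},f\rangle u_{j},
\end{equation*}
so formally $TS=\sum_{j=1}^{\infty}S^{*}x^{*}_{j}\otimes u_{j}$. The task reduces to verifying that this is a valid nuclear representation with the asserted norm bound.

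Next I would invoke Theorem \ref{1.6}(b): since $S:F\rightarrow X$ is positive $p^{*}$-majorizing, $S^{*}:X^{*}\rightarrow F^{*}$ is positive $p^{*}$-summing with $\|S^{*}\|_{\Lambda_{p^{*}}}=\|S\|_{\Upsilon_{p^{*}}}$. Because each $x^{*}_{j}\geq 0$, the defining inequality (\ref{1}) applied on partial sums yields, for every $n$,
\begin{equation*}
\Bigl(\sum_{j=1}^{n}\|S^{*}x^{*}_{j}\|^{p^{*}}\Bigr)^{\frac{1}{p^{*}}}\leq \|S\|_{\Upsilon_{p^{*}}}\|(x^{*}_{j})_{j=1}^{n}\|^{w}_{p^{*}}\leq \|S\|_{\Upsilon_{p^{*}}}\|(x^{*}_{j})_{j}\|^{w}_{p^{*}},
\end{equation*}
and letting $n\to\infty$ shows $(S^{*}x^{*}_{j})_{j}\in l_{p^{*}}(F^{*})$ with the same bound.

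Then H\"{o}lder's inequality gives
\begin{equation*}
\sum_{j=1}^{\infty}\|S^{*}x^{*}_{j}\|\,\|u_{j}\|\leq \Bigl(\sum_{j=1}^{\infty}\|S^{*}x^{*}_{j}\|^{p^{*}}\Bigr)^{\frac{1}{p^{*}}}\Bigl(\sum_{j=1}^{\infty}\|u_{j}\|^{p}\Bigr)^{\frac{1}{p}}\leq \|S\|_{\Upsilon_{p^{*}}}\|(x^{*}_{j})_{j}\|^{w}_{p^{*}}\|(u_{j})_{j}\|_{p},
\end{equation*}
so $TS=\sum_{j=1}^{\infty}S^{*}x^{*}_{j}\otimes u_{j}$ is a bona fide nuclear representation and
\begin{equation*}
\nu(TS)\leq \|S\|_{\Upsilon_{p^{*}}}\|(x^{*}_{j})_{j}\|^{w}_{p^{*}}\|(u_{j})_{j}\|_{p}.
\end{equation*}
Taking the infimum over all positively $p$-nuclear representations of $T$ yields $\nu(TS)\leq \widetilde{\nu}^{p}(T)\|S\|_{\Upsilon_{p^{*}}}$, as desired. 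The only mild subtlety is the passage from the finite-sum form of the positive $p^{*}$-summing inequality to the infinite sum, which is handled by monotone convergence on the partial sums; otherwise the proof is essentially the composition recipe combined with duality.
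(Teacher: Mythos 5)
Your proposal is correct and follows essentially the same route as the paper's proof: compose a positively $p$-nuclear representation of $T$ with $S$, use Theorem \ref{1.6} (duality with positive $p^{*}$-summing operators, applied to the positive functionals $x^{*}_{j}$) plus H\"{o}lder to bound $\sum_{j}\|S^{*}x^{*}_{j}\|\|u_{j}\|$, and then pass to the infimum over representations. The only cosmetic difference is that the paper works with a representation chosen within a factor $(1+\epsilon)$ of $\widetilde{\nu}^{p}(T)$ and lets $\epsilon\rightarrow 0$, which is equivalent to your direct infimum argument.
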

\begin{proof}
Let $\epsilon>0$. Then $T$ admits a positively $p$-nuclear representation $T=\sum\limits_{n=1}^{\infty}x^{*}_{n}\otimes u_{n}$ such that
$$\|(x^{*}_{n})_{n}\|_{p^{*}}^{w}\|(u_{n})_{n}\|_{p}\leq (1+\epsilon)\widetilde{\nu}^{p}(T).$$
By Theorem \ref{1.6}, we get
\begin{align*}
\sum_{n=1}^{\infty}\|S^{*}x^{*}_{n}\|\|u_{n}\|&\leq (\sum_{n=1}^{\infty}\|S^{*}x^{*}_{n}\|^{p^{*}})^{\frac{1}{p^{*}}}(\sum_{n=1}^{\infty}\|u_{n}\|
^{p})^{\frac{1}{p}}\\
&\leq \|S\|_{\Upsilon_{p^{*}}}\|(x^{*}_{n})_{n}\|_{p^{*}}^{w}\|(u_{n})_{n}\|_{p}\\
&\leq \|S\|_{\Upsilon_{p^{*}}}(1+\epsilon)\widetilde{\nu}^{p}(T).
\end{align*}
This means that $TS$ is nuclear and $\nu(TS)\leq \|S\|_{\Upsilon_{p^{*}}}(1+\epsilon)\widetilde{\nu}^{p}(T).$ Letting $\epsilon\rightarrow 0$, we get
$\nu(TS)\leq \widetilde{\nu}^{p}(T)\|S\|_{\Upsilon_{p^{*}}}$.

\end{proof}

Let $E$ be a Banach space and $X$ be a Banach lattice. We set
\begin{center}
$\mathcal{U}_{*}^{p}(E,X):=\{S\in \mathcal{L}(E,X):TS$ is nuclear for all $T\in \widetilde{\mathcal{N}}^{p}(X,E)\}.$
\end{center}
For $S\in \mathcal{U}_{*}^{p}(E,X)$, we define $$V_{S}:\widetilde{\mathcal{N}}^{p}(X,E)\rightarrow \mathcal{N}(E), T\mapsto TS.$$
It follows from the closed graph theorem that $V_{S}$ is continuous. We define a norm $\zeta^{p}$ on $\mathcal{U}_{*}^{p}(E,X)$ by
$$\zeta^{p}(S):=\|V_{S}\|, \quad S\in \mathcal{U}_{*}^{p}(E,X).$$
A routine argument shows that $[\mathcal{U}_{*}^{p}(E,X),\zeta^{p}]$ is a Banach space.

We note that if $E$ has the $AP$ and $U\in \mathcal{N}(E)$, then $\textrm{trace}(U)=\sum\limits_{n=1}^{\infty}\langle u^{*}_{n},u_{n}\rangle$ is independent of the choice of the nuclear representation $U=\sum\limits_{n=1}^{\infty}u^{*}_{n}\otimes u_{n}$. Moreover, $|\textrm{trace}(U)|\leq \nu(U)$.

\begin{theorem}
Suppose that $E$ has the $AP$. Then $$\Upsilon_{p^{*}}(E,X)=\mathcal{U}_{*}^{p}(E,X)$$ for all Banach lattices $X$.
\end{theorem}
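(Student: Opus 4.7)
The plan is to prove the two inclusions separately, obtaining an isometric identification $\|S\|_{\Upsilon_{p^{*}}} = \zeta^{p}(S)$.

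\textbf{Forward inclusion.} The inclusion $\Upsilon_{p^{*}}(E,X)\subseteq \mathcal{U}_{*}^{p}(E,X)$ with $\zeta^{p}(S)\leq \|S\|_{\Upsilon_{p^{*}}}$ is essentially a restatement of Lemma \ref{2.1}. Indeed, if $S\in \Upsilon_{p^{*}}(E,X)$ then for every $T\in \widetilde{\mathcal{N}}^{p}(X,E)$ the composition $TS\in \mathcal{L}(E,E)$ is nuclear with $\nu(TS)\leq \widetilde{\nu}^{p}(T)\|S\|_{\Upsilon_{p^{*}}}$, so the map $V_{S}$ is bounded with $\|V_{S}\|\leq \|S\|_{\Upsilon_{p^{*}}}$.

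\textbf{Reverse inclusion.} This is the main content. Fix $S\in \mathcal{U}_{*}^{p}(E,X)$, and take arbitrary finite families $(u_{j})_{j=1}^{n}$ in $B_{E}$ and $(x^{*}_{j})_{j=1}^{n}$ in $(X^{*})_{+}$. Choose scalars $(\lambda_{j})_{j=1}^{n}$ with $\sum_{j=1}^{n}|\lambda_{j}|^{p}=1$ such that
$$\Bigl(\sum_{j=1}^{n}|\langle x^{*}_{j},Su_{j}\rangle|^{p^{*}}\Bigr)^{\frac{1}{p^{*}}}=\sum_{j=1}^{n}\lambda_{j}\langle x^{*}_{j},Su_{j}\rangle.$$
Form the finite rank operator $T=\sum_{j=1}^{n}x^{*}_{j}\otimes \lambda_{j}u_{j}\in \widetilde{\mathcal{N}}^{p}(X,E)$; its positively $p$-nuclear norm satisfies $\widetilde{\nu}^{p}(T)\leq \|(x^{*}_{j})_{j=1}^{n}\|^{w}_{p^{*}}\cdot\|(\lambda_{j}u_{j})_{j=1}^{n}\|_{p}\leq \|(x^{*}_{j})_{j=1}^{n}\|^{w}_{p^{*}}$.

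Now consider $TS:E\to E$. Written explicitly, $TSu=\sum_{j=1}^{n}\langle S^{*}x^{*}_{j},u\rangle \lambda_{j}u_{j}$, so $TS=\sum_{j=1}^{n}(S^{*}x^{*}_{j})\otimes \lambda_{j}u_{j}$ is a nuclear representation. Because $E$ has the $AP$, the trace $\textrm{trace}(TS)$ is well defined, independent of the representation, and satisfies $|\textrm{trace}(TS)|\leq \nu(TS)$. Evaluating on the above representation,
$$\textrm{trace}(TS)=\sum_{j=1}^{n}\lambda_{j}\langle x^{*}_{j},Su_{j}\rangle =\Bigl(\sum_{j=1}^{n}|\langle x^{*}_{j},Su_{j}\rangle|^{p^{*}}\Bigr)^{\frac{1}{p^{*}}}.$$
Combining this with $\nu(TS)\leq \zeta^{p}(S)\widetilde{\nu}^{p}(T)\leq \zeta^{p}(S)\|(x^{*}_{j})_{j=1}^{n}\|^{w}_{p^{*}}$ yields
$$\Bigl(\sum_{j=1}^{n}|\langle x^{*}_{j},Su_{j}\rangle|^{p^{*}}\Bigr)^{\frac{1}{p^{*}}}\leq \zeta^{p}(S)\|(x^{*}_{j})_{j=1}^{n}\|^{w}_{p^{*}},$$
so $S$ is positive $p^{*}$-majorizing with $\|S\|_{\Upsilon_{p^{*}}}\leq \zeta^{p}(S)$.

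\textbf{Main obstacle.} The crux is the reverse inclusion, and the critical ingredient that makes the argument work is the $AP$ hypothesis on $E$: without it one cannot legitimately compute $\textrm{trace}(TS)$ from a chosen nuclear representation of $TS$, and the numerical equality between the trace and the quantity $\sum_{j}\lambda_{j}\langle x^{*}_{j},Su_{j}\rangle$ that one wants to bound would fail to be meaningful. The construction of the test operator $T$ as a $p$/$p^{*}$-Hölder-balanced combination $\sum x^{*}_{j}\otimes \lambda_{j}u_{j}$ is the standard duality trick and is straightforward once the trace step is secured.
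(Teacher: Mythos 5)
Your proof is correct, and for the reverse inclusion you take a genuinely different route from the paper. The paper deduces that inclusion from Theorem \ref{1.13}: the functional $\varphi(T)=\textrm{trace}(TS)$ on $\widetilde{\mathcal{N}}^{p}(X,E)$ satisfies $\|\varphi\|\leq\zeta^{p}(S)$, is represented by a unique $\widetilde{S}\in\Upsilon_{p^{*}}(E,X^{**})$ with $\|\widetilde{S}\|_{\Upsilon_{p^{*}}}=\|\varphi\|$, and the identification $\widetilde{S}=J_{X}S$ then shows $S$ is positive $p^{*}$-majorizing with $\|S\|_{\Upsilon_{p^{*}}}\leq\zeta^{p}(S)$. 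You instead verify Definition \ref{1.80} directly: the H\"older-balanced test operator $T=\sum_{j}x^{*}_{j}\otimes\lambda_{j}u_{j}$ has $\widetilde{\nu}^{p}(T)\leq\|(x^{*}_{j})_{j}\|^{w}_{p^{*}}$, and the $AP$ of $E$ yields $|\textrm{trace}(TS)|\leq\nu(TS)\leq\zeta^{p}(S)\,\widetilde{\nu}^{p}(T)$, which is exactly the remark preceding the theorem in the paper and is the only place the $AP$ is genuinely used (the trace of the finite-rank operator $TS$ is representation-independent in any case; the $AP$ is what bounds it by the nuclear norm). In effect you have inlined the relevant duality trick from the proof of Theorem \ref{1.13}, but since your functionals $x^{*}_{j}$ already live in $X^{*}$ and $S$ maps into $X$ rather than $X^{**}$, you need neither the lattice machinery (Lemma \ref{1.4}, Theorem \ref{1.3}) hidden in that theorem nor the passage through the bidual and the uniqueness step $J_{X}S=\widetilde{S}$. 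The paper's route buys brevity given that Theorem \ref{1.13} is already available; yours buys a self-contained, bidual-free argument whose only nontrivial input is $|\textrm{trace}|\leq\nu$ on $\mathcal{N}(E)$ under the $AP$. (Minor nitpick: the choice of $(\lambda_{j})_{j}$ presumes that not all $\langle x^{*}_{j},Su_{j}\rangle$ vanish; that degenerate case is trivial and worth a word.)
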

\begin{proof}
By Lemma \ref{2.1}, we get $\Upsilon_{p^{*}}(E,X)\subseteq \mathcal{U}_{*}^{p}(E,X)$ and $\zeta^{p}\leq \|\cdot\|_{\Upsilon_{p^{*}}}$.

Conversely, for $S\in \mathcal{U}_{*}^{p}(E,X)$, we define $\varphi\in (\widetilde{\mathcal{N}}^{p}(X,E))^{*}$ by $$\langle \varphi,T\rangle=\textrm{trace}(TS), \quad T\in\widetilde{\mathcal{N}}^{p}(X,E).$$ Clearly, $\|\varphi\|\leq \zeta^{p}(S)$.
It follows from Theorem \ref{1.13} that there exists a unique operator $\widetilde{S}\in \Upsilon_{p^{*}}(E,X^{**})$ such that $\|\widetilde{S}\|_{\Upsilon_{p^{*}}}=
\|\varphi\|$ and $\textrm{trace}(\widetilde{S}T)=\langle \varphi,T\rangle$ for all $T\in \widetilde{\mathcal{N}}^{p}(X,E)$. The uniqueness of $\widetilde{S}$ implies that $J_{X}S=\widetilde{S}.$ Hence $S$ is positive $p^{*}$-majorizing and
$$\|S\|_{\Upsilon_{p^{*}}}=\|J_{X}S\|_{\Upsilon_{p^{*}}}=\|\varphi\|\leq \zeta^{p}(S).$$
The conclusion follows.

\end{proof}

\section{Positively $p$-integral operators}

Let us begin this section with recalling the definition of maximal Banach operator ideals.

\begin{definition}\cite{DF}\label{1.777}
Let $[\mathfrak{A}, \mathbf{A}]$ be a Banach operator ideal.
\item[(1)] For $T\in \mathcal{L}(E,F)$ define
$$\mathbf{A}^{\max}(T):=\sup\{\mathbf{A}(Q_{L}Ti_{M}): M\in FIN(E), L\in COFIN(F)\}$$
$$\mathfrak{A}^{\max}(E,F):=\{T\in \mathcal{L}(E,F):\mathbf{A}^{\max}(T)<\infty\}$$
and call $[\mathfrak{A},\mathbf{A}]^{\max}:=[\mathfrak{A}^{\max},\mathbf{A}^{\max}]$ the \textit{maximal hull} of $[\mathfrak{A}, \mathbf{A}]$.
\item[(2)] $[\mathfrak{A}, \mathbf{A}]$ is called \textit{maximal} if $[\mathfrak{A}, \mathbf{A}]=[\mathfrak{A}^{\max},\mathbf{A}^{\max}]$.
\end{definition}

There is another criterion for the maximal hull $(\mathfrak{A},\mathbf{A})^{\max}$.

\begin{theorem}\cite{P}\label{1.77}
Let $[\mathfrak{A}, \mathbf{A}]$ be a Banach operator ideal. An operator $T\in \mathcal{L}(E,F)$ belongs to $\mathfrak{A}^{\max}(E,F)$ if and only if there exists a constant $C>0$ such that
\begin{center}
$\mathbf{A}(RTS)\leq C\|R\|\|S\|$ for all $S\in \mathcal{F}(G,E)$ and $R\in \mathcal{F}(F,H),$
\end{center}
where $G,H$ are arbitrary Banach spaces.

In this case, $$\mathbf{A}^{\max}(T)=\inf C.$$
\end{theorem}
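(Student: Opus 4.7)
The plan is to establish the equivalence in both directions by exploiting the fact that any finite-rank operator admits a factorization through its (finite-dimensional) range and its (finite co-dimensional) kernel. This reduces the second condition, stated in terms of $\mathcal{F}(G,E)$ and $\mathcal{F}(F,H)$, to the defining supremum of $\mathbf{A}^{\max}$, which is stated in terms of $FIN(E)$ and $COFIN(F)$.

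For the forward implication, I would assume $T\in\mathfrak{A}^{\max}(E,F)$ and, given $S\in\mathcal{F}(G,E)$ and $R\in\mathcal{F}(F,H)$, set $M:=\overline{S(G)}\in FIN(E)$ and $L:=\ker R\in COFIN(F)$ (the latter because $R$ has finite rank, so $F/L\cong R(F)$ is finite dimensional). Then $S$ factors as $S=i_{M}\tilde{S}$ for some $\tilde{S}:G\to M$ with $\|\tilde{S}\|=\|S\|$, and $R$ factors as $R=\tilde{R}Q_{L}$ for some $\tilde{R}:F/L\to H$ with $\|\tilde{R}\|=\|R\|$. Using the two-sided ideal property of $[\mathfrak{A},\mathbf{A}]$:
\begin{equation*}
\mathbf{A}(RTS)=\mathbf{A}(\tilde{R}Q_{L}Ti_{M}\tilde{S})\leq\|\tilde{R}\|\,\mathbf{A}(Q_{L}Ti_{M})\,\|\tilde{S}\|\leq\|R\|\,\mathbf{A}^{\max}(T)\,\|S\|,
\end{equation*}
so $C=\mathbf{A}^{\max}(T)$ is admissible and $\inf C\leq\mathbf{A}^{\max}(T)$.

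For the reverse implication, assume the inequality $\mathbf{A}(RTS)\leq C\|R\|\|S\|$ holds for all $S\in\mathcal{F}(G,E)$, $R\in\mathcal{F}(F,H)$ and arbitrary Banach spaces $G,H$. Given any $M\in FIN(E)$ and $L\in COFIN(F)$, take $G=M$, $H=F/L$, $S=i_{M}$ and $R=Q_{L}$; both are finite-rank (as $M$ and $F/L$ are finite dimensional) with $\|i_{M}\|=1$ and $\|Q_{L}\|\leq 1$. The assumption yields
\begin{equation*}
\mathbf{A}(Q_{L}Ti_{M})\leq C\|Q_{L}\|\|i_{M}\|\leq C.
\end{equation*}
Taking the supremum over $M\in FIN(E)$ and $L\in COFIN(F)$ gives $\mathbf{A}^{\max}(T)\leq C$, so $T\in\mathfrak{A}^{\max}(E,F)$ and $\mathbf{A}^{\max}(T)\leq\inf C$. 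Combining both bounds yields the equality $\mathbf{A}^{\max}(T)=\inf C$.

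There is no real obstacle in this argument; it is a formal manipulation built entirely on the ideal property of $[\mathfrak{A},\mathbf{A}]$ and on the standard factorization of finite-rank operators through their range and kernel. The only point worth being careful about is that $i_{M}$ has finite rank \emph{as an operator from $M$ into $E$} (so that the assumption of the reverse direction applies), and similarly that $Q_{L}$ has finite rank as a map from $F$ into $F/L$; both follow immediately from the dimensional hypotheses on $M$ and $L$.
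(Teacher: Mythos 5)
Your proof is correct: factoring $S$ through its finite-dimensional range $M=S(G)$ and $R$ through its finite co-dimensional kernel $L=\ker R$, together with the ideal property, gives exactly the equivalence and the norm equality, which is the standard argument for this criterion (the paper itself only cites \cite[Theorem 8.7.4]{P} and gives no proof). The only implicit points — that $Q_{L}Ti_{M}$ and $RTS$ lie in $\mathfrak{A}$ because they are finite rank, and that the induced maps $\tilde{S}$, $\tilde{R}$ preserve norms — are all trivially justified, so nothing is missing.
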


Recall \cite{P} that an operator $S:E\rightarrow F$ is called \textit{$p$-integral} if it belongs to $[\mathcal{N}_{p},\nu_{p}]^{\max}.$ The $p$-integral norm
of $S$ is defined by $i_{p}(S):=\nu_{p}^{\max}(S).$
It follows from Theorem \ref{1.77} that an operator $S:E\rightarrow F$ is $p$-integral if and only if there exists a constant $C>0$ such that
$\nu_{p}(RTS)\leq C\|R\|\|S\|$ for all $S\in \mathcal{F}(G,E)$ and $R\in \mathcal{F}(F,H),$
where $G,H$ are arbitrary Banach spaces. Moreover, $i_{p}(S)=\inf C.$

In an analogous way, O. I. Zhukova \cite{Zhu} introduced the notion of latticially $p$-integral operators by use of latticially $p$-nuclear operators.

\begin{definition}\cite{Zhu}\label{1.7}
An operator $S:E\rightarrow X$ is called \textit{latticially $p$-integral} if there is a number $C$ such that the inequality $\widetilde{\nu}_{p}(BSA)\leq C\|A\|\|B\|$ is valid for arbitrary $F$ and $Y$ and arbitrary operators $A\in \mathcal{F}(F,E), B\in \mathcal{F}(X,Y)_{+}$.

One set
$$\widetilde{i}_{p}(S)=\inf C.$$
\end{definition}
The class of all latticially $p$-integral operators is denoted by $\widetilde{\mathcal{I}}_{p}$. It easily follows from the left positive ideal property of latticially $p$-nuclear operators that latticially $p$-integral operators also have the left positive ideal property.

Naturally, we introduce the notion of positively $p$-integral operators by means of positively $p$-nuclear operators.

\begin{definition}\label{1.31}
We say that an operator $T: X\rightarrow E$ is \textit{positively $p$-integral} if there exists a constant $C>0$ such that
\begin{center}
$\widetilde{\nu}^{p}(RTS)\leq C\|R\|\|S\|$ for all $S\in \mathcal{F}_{+}(Y,X), R\in \mathcal{F}(E,F)$,
\end{center}
where $Y$ is arbitrary Banach lattice and $F$ is arbitrary Banach space.

We put $$\widetilde{i}^{p}(T):=\inf C.$$
\end{definition}
The class of all positively $p$-integral operators is denoted by $\widetilde{\mathcal{I}}^{p}$. It follows from Proposition \ref{1.1} that positively $p$-integral operators have the right positive ideal property.
Clearly, every positively $p$-nuclear operator is positively $p$-integral with $\widetilde{i}^{p}\leq \widetilde{\nu}^{p}$.

The definitions of latticially $p$-integral operators and positively $p$-integral operators both stem from another characterization of the maximal hull of Banach operator ideals (Theorem \ref{1.77}), not from the original definition of the maximal hull (Definition \ref{1.777}). But the following result shows that the class of positively $p$-integral operators coincides with the right positive maximal hull of positively $p$-nuclear operators under the hypothesis of order completeness.

\begin{theorem}\label{3.2}
Let $X$ be an order complete Banach lattice and $E$ be a Banach space. Let $C>0$ and $T\in \mathcal{L}(X,E)$. The following statements are equivalent:
\item[(a)]$T$ is positively $p$-integral with $\widetilde{i}^{p}(T)\leq C$;
\item[(b)]$\widetilde{\nu}^{p}(Q_{L}Ti_{G})\leq C$ for all $G\in LDim(X), L\in COFIN(E)$.
\end{theorem}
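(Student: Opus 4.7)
The direction $\mathrm{(a)}\Rightarrow\mathrm{(b)}$ is a direct specialization of Definition \ref{1.31}: for any $G\in LDim(X)$ and $L\in COFIN(E)$, the inclusion $i_G$ lies in $\mathcal{F}_+(G,X)$ and the quotient $Q_L$ lies in $\mathcal{F}(E,E/L)$, both of norm at most $1$, so taking $Y=G$, $F=E/L$, $S=i_G$, and $R=Q_L$ in the defining inequality immediately gives $\widetilde{\nu}^{p}(Q_L T i_G)\le C$.

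For $\mathrm{(b)}\Rightarrow\mathrm{(a)}$, I will fix an arbitrary Banach lattice $Y$, Banach space $F$, $S\in\mathcal{F}_+(Y,X)$, $R\in\mathcal{F}(E,F)$, and $\epsilon>0$, and aim to prove $\widetilde{\nu}^{p}(RTS)\le C\|R\|\|S\|$. The strategy is to reduce both $S$ and $R$ to the special maps appearing in (b). On the $R$-side, set $L=\ker R\in COFIN(E)$ and factor $R=\widetilde{R}\,Q_L$ with $\|\widetilde{R}\|=\|R\|$. On the $S$-side, let $M=S(Y)\in FIN(X)$ and invoke Lemma \ref{1.4}---this is where the order completeness of $X$ is used---to obtain a sublattice $Z\supseteq M$ of $X$, a finite-dimensional sublattice $G\in LDim(X)$ inside $Z$, and a positive projection $P\colon Z\to G$ with $\|Px-x\|\le\epsilon\|x\|$ for all $x\in M$.

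Next I would split
\[
RTS \;=\; \widetilde{R}\,(Q_L T i_G)\,(PS) \;+\; RT\bigl(S-i_G PS\bigr).
\]
For the first summand, note that $S(Y_+)\subseteq M\cap X_+\subseteq Z_+$, so $S\colon Y\to Z$ is positive and hence $PS\colon Y\to G$ is positive finite-rank; the approximation on $M$ then gives $\|PS\|\le(1+\epsilon)\|S\|$. Since $Q_L T i_G\in\widetilde{\mathcal{N}}^{p}(G,E/L)$ with $\widetilde{\nu}^{p}(Q_L T i_G)\le C$ by hypothesis (b), the ideal property in Proposition \ref{1.1}(a) applied with $\widetilde{R}$ on the left and the positive map $PS$ on the right bounds this summand by $C(1+\epsilon)\|R\|\|S\|$.

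The second summand is a finite-rank error of operator norm at most $\epsilon\|R\|\|T\|\|S\|$, since $\|Sy-P(Sy)\|\le\epsilon\|Sy\|$ for all $y\in Y$, and of rank at most $m:=\mathrm{rank}(R)$, which is fixed independently of $\epsilon$. A standard Auerbach-basis argument, combined with splitting the associated functionals in $Y^*$ into their positive and negative parts, shows that any finite-rank operator from $Y$ of rank at most $m$ admits a positively $p$-nuclear representation whose $\widetilde{\nu}^{p}$-norm is at most $2m$ times its operator norm; hence this summand contributes $O(\epsilon)$ as $\epsilon\to 0$. Letting $\epsilon\to 0$ then gives $\widetilde{\nu}^{p}(RTS)\le C\|R\|\|S\|$. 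I expect the crux to be exactly the invocation of Lemma \ref{1.4}: a general positive finite-rank operator $S$ need not land in any finite-dimensional sublattice of $X$, and order completeness is precisely what allows one to replace $S$ by a nearby positive map $i_G PS$ into such a sublattice while preserving both positivity and a good norm bound.
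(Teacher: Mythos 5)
Your proposal is correct and follows essentially the same route as the paper: the same factorization $R=\widetilde{R}Q_{L}$ through $L=\ker R$, the same invocation of Lemma \ref{1.4} (this is indeed where order completeness enters) to replace the positive finite-rank map by $i_{G}PS$ with $\|PS\|\leq(1+\epsilon)\|S\|$, the same splitting into a main term controlled by hypothesis (b) plus the ideal property of Proposition \ref{1.1}(a), and a vanishing error term. The only divergence is in estimating that error: the paper bounds $\widetilde{\nu}^{p}$ of the error by $\widetilde{\nu}^{1}\leq 2|\widetilde{\nu}^{1}|=2\nu$ via Proposition \ref{1.1}(b)--(d) and then uses $\nu(RT)\|S-i_{G}PS\|$ with the fixed finite-rank operator $RT$, whereas you use a direct Auerbach-basis bound $\widetilde{\nu}^{p}(W)\leq 2\,\mathrm{rank}(W)\,\|W\|$ obtained by splitting the functionals into positive and negative parts -- both arguments are valid and yield the same conclusion.
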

\begin{proof}
The implication $(a)\Rightarrow (b)$ is trivial.

\noindent $(b)\Rightarrow (a)$. Given any finite-rank operator $S:E\rightarrow F$ and positive finite-rank operator $R:Y\rightarrow X$. Let $M=RY$. Let $\epsilon>0$.
It follows from Lemma \ref{1.4} that there exist a sublattice $Z$ of $X$ containing $M$, a finite-dimensional sublattice $G$ of $Z$
and a positive projection $P$ from $Z$ onto $G$ such that $\|Px-x\|\leq \epsilon\|x\|$ for all $x\in M$. We define an operator $\widehat{S}:E/\textrm{Ker}(S)\rightarrow F$ by $u+\textrm{Ker}(S)\mapsto Su$. Clearly, the operator $\widehat{S}$ is one-to-one, has the same range as $S$ and $\|\widehat{S}\|=\|S\|$. Then $L:=\textrm{Ker}(S)$ is finite co-dimensional and $S=\widehat{S}Q_{L}$. By (b), we get
\begin{align*}
\widetilde{\nu}^{p}(STPR)&=\widetilde{\nu}^{p}(\widehat{S}Q_{L}Ti_{G}PR)\\
&\leq C\|\widehat{S}\|\|PR\|\\
&\leq (1+\epsilon)C\|S\|\|R\|.
\end{align*}
By Proposition \ref{1.1}, we have
\begin{align*}
\widetilde{\nu}^{p}(STR)&\leq \widetilde{\nu}^{p}(STR-STPR)+\widetilde{\nu}^{p}(STPR)\\
&\leq \widetilde{\nu}^{p}(STR-STPR)+(1+\epsilon)C\|S\|\|R\|\\
&\leq \widetilde{\nu}^{1}(STR-STPR)+(1+\epsilon)C\|S\|\|R\|\\
&\leq 2|\widetilde{\nu}^{1}|(STR-STPR)+(1+\epsilon)C\|S\|\|R\|\\
&=2\nu(STR-STPR)+(1+\epsilon)C\|S\|\|R\|\\
&=2\nu(ST)\|R-PR\|+(1+\epsilon)C\|S\|\|R\|\\
&=2\epsilon\nu(ST)\|R\|+(1+\epsilon)C\|S\|\|R\|\\
\end{align*}
Letting $\epsilon\rightarrow 0$, we get $$\widetilde{\nu}^{p}(STR)\leq C\|S\|\|R\|.$$
This completes the proof.

\end{proof}

\begin{theorem}\label{1.30}
Suppose that $X^{*}$ has the $PMAP$. Let $T\in \mathcal{L}(X,E)$ and let $C>0$. The following statements are equivalent:
\item[(i)]$T$ is positively $p$-integral with $\widetilde{i}^{p}(T)\leq C$.
\item[(ii)]$\sup\{\widetilde{\nu}^{p}(Q_{L}T):L\in COFIN(E)\}\leq C$.
\end{theorem}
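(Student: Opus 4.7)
The plan is to prove the two implications separately, closely imitating the strategy of Theorem \ref{3.2}, with the $PMAP$ of $X^{*}$ (via the consequence of Lemma \ref{3.8}) playing the role that order completeness and Lemma \ref{1.4} played there.

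For (ii) $\Rightarrow$ (i), given $S\in \mathcal{F}_{+}(Y,X)$ and $R\in \mathcal{F}(E,F)$, I would factor $R=\widehat{R}Q_{L}$ with $L=\textrm{Ker}(R)\in COFIN(E)$ and $\widehat{R}:E/L\to F$ the induced injection satisfying $\|\widehat{R}\|=\|R\|$. Writing $RTS=\widehat{R}(Q_{L}T)S$ and invoking Proposition \ref{1.1}(a) together with the hypothesis (ii) should immediately yield $\widetilde{\nu}^{p}(RTS)\leq \|\widehat{R}\|\widetilde{\nu}^{p}(Q_{L}T)\|S\|\leq C\|R\|\|S\|$, which is (i).

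For the more delicate direction (i) $\Rightarrow$ (ii), I would fix $L\in COFIN(E)$ and exploit the finite-dimensionality of $E/L$. After choosing a basis $e_{1},\dots,e_{n}$ of $E/L$, the operator $Q_{L}T$ admits a finite representation $Q_{L}T=\sum_{i=1}^{n}x^{*}_{i}\otimes e_{i}$ with suitable $x^{*}_{1},\dots,x^{*}_{n}\in X^{*}$. Given $\epsilon>0$, the remark following Lemma \ref{3.8} provides $R_{\epsilon}\in \mathcal{F}_{+}(X)$ with $\|R_{\epsilon}\|\leq 1$ and $\|R^{*}_{\epsilon}x^{*}_{i}-x^{*}_{i}\|<\epsilon$ for $i=1,\dots,n$. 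Then (i) applied to the pair $Q_{L}\in \mathcal{F}(E,E/L)$ and $R_{\epsilon}\in \mathcal{F}_{+}(X,X)$ gives $\widetilde{\nu}^{p}(Q_{L}TR_{\epsilon})\leq C\|Q_{L}\|\|R_{\epsilon}\|\leq C$, while the perturbation
$$Q_{L}T-Q_{L}TR_{\epsilon}=\sum_{i=1}^{n}(x^{*}_{i}-R^{*}_{\epsilon}x^{*}_{i})\otimes e_{i}$$
is automatically nuclear with $\nu(Q_{L}T-Q_{L}TR_{\epsilon})\leq \epsilon\sum_{i=1}^{n}\|e_{i}\|$. Using the chain $\widetilde{\nu}^{p}\leq \widetilde{\nu}^{1}\leq 2|\widetilde{\nu}^{1}|=2\nu$ extracted from Proposition \ref{1.1}(b), (c), (d), I would conclude $\widetilde{\nu}^{p}(Q_{L}T)\leq 2\epsilon\sum_{i=1}^{n}\|e_{i}\|+C$; letting $\epsilon\to 0$ gives (ii).

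The hard part will be controlling the perturbation $Q_{L}T-Q_{L}TR_{\epsilon}$ in the $\widetilde{\nu}^{p}$ norm, since $PMAP$ of $X^{*}$ only furnishes dual-side norm approximation of finitely many functionals and a priori says nothing about $\widetilde{\nu}^{p}$. The resolution, following the philosophy of Theorem \ref{3.2}, is to observe that this perturbation is finite rank, hence nuclear, and then invoke the comparison $\widetilde{\nu}^{p}\leq 2\nu$ to trade a trivial nuclear-norm bound for the desired $\widetilde{\nu}^{p}$-estimate.
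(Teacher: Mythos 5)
Your proposal is correct and follows essentially the same route as the paper: the paper also treats (ii)$\Rightarrow$(i) as immediate, and for (i)$\Rightarrow$(ii) it likewise writes $Q_{L}T=\sum_{i}x^{*}_{i}\otimes\phi_{i}$, uses Lemma \ref{3.8} to produce $A\in\mathcal{F}_{+}(X)$, $\|A\|\leq 1$, approximating the $x^{*}_{i}$ in $X^{*}$, applies (i) to $Q_{L}TA$, and bounds the finite-rank perturbation via the chain $\widetilde{\nu}^{p}\leq\widetilde{\nu}^{1}\leq 2|\widetilde{\nu}^{1}|=2\nu$. No gaps.
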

\begin{proof}
$(ii)\Rightarrow (i)$ is obvious.

$(i)\Rightarrow (ii)$. Let $L\in COFIN(E)$ and let $\epsilon>0$. We write $Q_{L}T=\sum\limits_{i=1}^{n}x^{*}_{i}\otimes \phi_{i}, x^{*}_{i}\in X^{*},\phi_{i}\in E/L(i=1,2,\cdots,n)$. Choose $\delta>0$ such that $\delta\sum\limits_{i=1}^{n}\|\phi_{i}\|<\epsilon.$ Since $X^{*}$ has the $PMAP$, it follows from Lemma \ref{3.8} that there exists an operator $A\in \mathcal{F}_{+}(X)$ with $\|A\|\leq 1$ such that $\|A^{*}x^{*}_{i}-x^{*}_{i}\|<\delta$ for all $i=1,2,\cdots,n$. By $(i)$, we get
$\widetilde{\nu}^{p}(Q_{L}TA)\leq C.$

By Proposition \ref{1.1}, we get
\begin{align*}
\widetilde{\nu}^{p}(Q_{L}T)&\leq \widetilde{\nu}^{p}(Q_{L}T-Q_{L}TA)+\widetilde{\nu}^{p}(Q_{L}TA)\\
&\leq \widetilde{\nu}^{1}(Q_{L}T-Q_{L}TA)+C\\
&\leq 2|\widetilde{\nu}^{1}|(Q_{L}T-Q_{L}TA)+C\\
&=2\nu(Q_{L}T-Q_{L}TA)+C\\
&\leq 2\sum_{i=1}^{n}\|Ax^{*}_{i}-x^{*}_{i}\|\|\phi_{i}\|+C\\
&\leq 2\epsilon+C.\\
\end{align*}
Letting $\epsilon\rightarrow 0$, we get $\widetilde{\nu}^{p}(Q_{L}T)\leq C$.

\end{proof}

An analogous argument shows the following theorem.

\begin{theorem}\label{2.3}
Suppose that $X$ has the $PMAP$. Let $S\in \mathcal{L}(E,X)$ and let $C>0$. The following statements are equivalent:
\item[(i)]$S$ is latticially $p$-integral with $\widetilde{i}_{p}(S)\leq C$.
\item[(ii)]$\sup\{\widetilde{\nu}_{p}(Si_{M}):M\in FIN(E)\}\leq C$.
\end{theorem}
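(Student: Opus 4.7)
The plan is to mirror the proof of Theorem \ref{1.30}, with the roles of domain and codomain interchanged: approximation is carried out on the codomain $X$, where we have the $PMAP$, by inserting a positive finite-rank operator on the left of $Si_M$, while the inclusion $i_M:M\to E$ (which is finite-rank because $\dim M<\infty$) plays the role of the finite-rank operator inserted on the right of $S$ in Definition \ref{1.7}.

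The direction $(ii)\Rightarrow (i)$ is immediate from the left positive ideal property of $\widetilde{\mathcal{N}}_p$. For any $A\in\mathcal{F}(F,E)$ and $B\in\mathcal{F}_+(X,Y)$, set $M:=A(F)\in FIN(E)$ and factor $A=i_M\widehat{A}$ with $\widehat{A}:F\to M$ satisfying $\|\widehat{A}\|=\|A\|$; then $\widetilde{\nu}_p(BSA)=\widetilde{\nu}_p\bigl(B(Si_M)\widehat{A}\bigr)\leq \|B\|\widetilde{\nu}_p(Si_M)\|A\|\leq C\|B\|\|A\|$. For $(i)\Rightarrow(ii)$, fix $M\in FIN(E)$ and $\epsilon>0$, write the finite-rank operator $Si_M=\sum_{i=1}^{n}u_i^{*}\otimes x_i$ with $u_i^{*}\in M^{*}$ and $x_i\in X$, pick $\delta>0$ with $\delta\sum_{i=1}^{n}\|u_i^{*}\|<\epsilon$, and invoke the $PMAP$ of $X$ to produce $A\in\mathcal{F}_{+}(X)$ with $\|A\|\leq 1$ and $\|Ax_i-x_i\|<\delta$ for every $i$. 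Since $A:X\to X$ is positive finite-rank and $i_M:M\to E$ is finite-rank, hypothesis (i) yields $\widetilde{\nu}_p(ASi_M)\leq C\|A\|\|i_M\|\leq C$.

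The remaining ingredient is the error estimate, which is the only place where any care is needed: one cannot bound $\widetilde{\nu}_p(Si_M-ASi_M)$ directly from the pointwise approximation $\|Ax_i-x_i\|<\delta$. Exactly as in the proof of Theorem \ref{1.30}, the way around this is to drop down via the ideal inequalities $\widetilde{\nu}_p\leq\widetilde{\nu}_1\leq 2\widetilde{\nu}_1'=2\nu$ to the ordinary nuclear norm. Applied to the finite-rank representation $Si_M-ASi_M=\sum_{i=1}^{n}u_i^{*}\otimes(x_i-Ax_i)$, this gives
$$\widetilde{\nu}_p(Si_M-ASi_M)\leq 2\sum_{i=1}^{n}\|u_i^{*}\|\|x_i-Ax_i\|<2\epsilon,$$
and the triangle inequality yields $\widetilde{\nu}_p(Si_M)\leq C+2\epsilon$; letting $\epsilon\to 0$ finishes the proof. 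The only conceptual hurdle is recognizing that the composition order (positive approximation on $X$, finite-rank inclusion from $M$) fits precisely the form $BSA$ permitted by Definition \ref{1.7}; beyond this, no idea beyond those already deployed in Theorem \ref{1.30} is required.
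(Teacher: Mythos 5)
Your proposal is correct and is exactly the argument the paper intends: the paper omits the proof of Theorem \ref{2.3} with the remark that it is analogous to Theorem \ref{1.30}, and your write-up is precisely that adaptation (factor $A$ through its range for $(ii)\Rightarrow(i)$ via the left positive ideal property, and for $(i)\Rightarrow(ii)$ approximate the finitely many vectors $x_i$ in the range of $Si_M$ by a positive norm-one finite-rank operator from the $PMAP$ of $X$, passing through $\widetilde{\nu}_{p}\leq\widetilde{\nu}_{1}\leq 2\widetilde{\nu}_{1}'=2\nu$ for the error term). No gaps.
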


\begin{corollary}\label{1.20}
\item[(a)]If $S:E\rightarrow X$ is latticially $p$-integral, then $S^{*}$ is positively $p$-integral. In this case, $\widetilde{i}^{p}(S^{*})\leq \widetilde{i}_{p}(S)$.
\item[(b)]If $T:X\rightarrow E$ is positively $p$-integral and $X^{*}$ has the $PMAP$, then $T^{*}$ is latticially $p$-integral.
In this case, $\widetilde{i}_{p}(T^{*})\leq \widetilde{i}^{p}(T).$
\end{corollary}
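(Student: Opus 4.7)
The plan is to use the one-sided characterizations of integrality supplied by Theorems \ref{3.2}, \ref{1.30} and \ref{2.3}, together with the duality relations in Proposition \ref{1.1}(e),(f). Two structural facts drive everything: $X^{*}$ is automatically order complete (as a dual Banach lattice), which makes Theorem \ref{3.2} available for operators out of $X^{*}$ with no approximation-property assumption; and the PMAP hypothesis in part (b) simultaneously activates Theorem \ref{2.3} for the target $X^{*}$ and Theorem \ref{1.30} for the domain $X$ of $T$.

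For part (a), I will verify positive $p$-integrality of $S^{*}:X^{*}\to E^{*}$ via Theorem \ref{3.2}, which reduces the job to bounding $\widetilde{\nu}^{p}(Q_{L}S^{*}i_{G})$ by $\widetilde{i}_{p}(S)$ for every $G\in LDim(X^{*})$ and $L\in COFIN(E^{*})$. Passing to adjoints under the canonical identification $(E^{*}/L)^{*}=L^{\perp}\subset E^{**}$ gives $(Q_{L}S^{*}i_{G})^{*}=i_{G}^{*}S^{**}Q_{L}^{*}:L^{\perp}\to G^{*}$. Since $L^{\perp}\subset E^{**}$ and $S^{*}(G)\subset E^{*}$ are both finite-dimensional, the classical Banach-space principle of local reflexivity yields, for every $\epsilon>0$, a linear map $T_{\epsilon}:L^{\perp}\to E$ with $\|T_{\epsilon}\|\le 1+\epsilon$ satisfying $\langle T_{\epsilon}\phi,S^{*}g\rangle=\langle \phi,S^{*}g\rangle$ for all $\phi\in L^{\perp}$ and $g\in G$. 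A direct evaluation then produces the exact factorization
\[
i_{G}^{*}S^{**}Q_{L}^{*}=(i_{G}^{*}J_{X})\,S\,T_{\epsilon},
\]
in which $i_{G}^{*}J_{X}:X\to G^{*}$ is positive, finite-rank, and of norm at most $1$. Definition \ref{1.7} applied to the latticially $p$-integral operator $S$ yields $\widetilde{\nu}_{p}((Q_{L}S^{*}i_{G})^{*})\le (1+\epsilon)\widetilde{i}_{p}(S)$, and Proposition \ref{1.1}(e) upgrades this to $\widetilde{\nu}^{p}((Q_{L}S^{*}i_{G})^{**})\le (1+\epsilon)\widetilde{i}_{p}(S)$. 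Because $G$ and $E^{*}/L$ are finite-dimensional, the canonical evaluation maps $J_{G}$ and $J_{E^{*}/L}$ are isometric isomorphisms with $J_{G}$ positive, so the identity $Q_{L}S^{*}i_{G}=J_{E^{*}/L}^{-1}(Q_{L}S^{*}i_{G})^{**}J_{G}$ combined with Proposition \ref{1.1}(a) delivers $\widetilde{\nu}^{p}(Q_{L}S^{*}i_{G})\le \widetilde{\nu}^{p}((Q_{L}S^{*}i_{G})^{**})$; letting $\epsilon\to 0$ closes the estimate and Theorem \ref{3.2} yields the conclusion.

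For part (b), Theorem \ref{2.3}---applied with target $X^{*}$, which has the PMAP---reduces the task to showing $\widetilde{\nu}_{p}(T^{*}i_{M})\le \widetilde{i}^{p}(T)$ for every $M\in FIN(E^{*})$. Given such an $M$, set $L:=M^{\perp}\in COFIN(E)$; finite-dimensionality of $M$ forces $L^{\perp}=M$, and under the identification $(E/L)^{*}=L^{\perp}=M$ one has $T^{*}i_{M}=(Q_{L}T)^{*}$. Proposition \ref{1.1}(f) gives $\widetilde{\nu}_{p}(T^{*}i_{M})=\widetilde{\nu}_{p}((Q_{L}T)^{*})\le \widetilde{\nu}^{p}(Q_{L}T)$, and Theorem \ref{1.30}, available via the PMAP of $X^{*}$, gives $\widetilde{\nu}^{p}(Q_{L}T)\le \widetilde{i}^{p}(T)$, closing the chain.

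The principal obstacle lies in part (a): the local reflexivity map $T_{\epsilon}$ must respect the pairing against every $S^{*}g$ with $g\in G$, which is achieved only by feeding $N:=S^{*}(G)$ as the designated finite-dimensional subspace of $E^{*}$ in the local reflexivity lemma. Moreover, the passage from the bound for $(Q_{L}S^{*}i_{G})^{**}$ down to one for $Q_{L}S^{*}i_{G}$ itself is clean precisely because both endpoints are finite-dimensional, so the finite-dimensional reduction supplied by Theorem \ref{3.2} is what lets part (a) avoid any approximation-property hypothesis. Part (b) will be largely mechanical once Theorems \ref{1.30} and \ref{2.3} and Proposition \ref{1.1}(f) are in place.
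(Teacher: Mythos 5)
Your proof is correct, but part (a) follows a genuinely different route from the paper (part (b) is essentially the paper's argument: reduce via $L={}^{\perp}M$, $L^{\perp}=M$, use Proposition \ref{1.1}(f) plus Theorem \ref{1.30}, then conclude with Theorem \ref{2.3}; your identification $T^{*}i_{M}=(Q_{L}T)^{*}$ is the paper's step with the isometry $(Q_{L}^{*})^{-1}$ left implicit, and your ``$M^{\perp}$'' should be the preannihilator ${}^{\perp}M$ --- harmless). For (a), the paper verifies Definition \ref{1.31} directly: given arbitrary $A\in\mathcal{F}_{+}(Y,X^{*})$ and $B\in\mathcal{F}(E^{*},F)$, it uses the Johnson--Rosenthal--Zippin lemma \cite{JRZ} to replace $B$ by a weak$^{*}$-continuous operator $C=D^{*}$ agreeing with $B$ on $S^{*}AY$, writes $BS^{*}A=(A^{*}J_{X}SD)^{*}J_{Y}$, applies the latticial $p$-integrality of $S$ to the pair $(A^{*}J_{X},D)$, and dualizes via Proposition \ref{1.1}(e). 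You instead route through Theorem \ref{3.2} --- legitimate because $X^{*}$, being a dual Banach lattice, is order complete --- reducing to $Q_{L}S^{*}i_{G}$ with $G\in LDim(X^{*})$, $L\in COFIN(E^{*})$, and then use the classical principle of local reflexivity with designated subspace $S^{*}(G)\subseteq E^{*}$ to obtain the exact factorization $(Q_{L}S^{*}i_{G})^{*}=(i_{G}^{*}J_{X})ST_{\epsilon}$, with $i_{G}^{*}J_{X}$ positive finite rank of norm at most one; the return from the bidual is clean since $G$ and $E^{*}/L$ are finite dimensional and $J_{G}$ is positive, so Proposition \ref{1.1}(e) and (a) close the estimate. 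Both arguments are sound and of comparable length. The paper's buys directness: it never needs Theorem \ref{3.2}, the order-completeness of dual lattices, or the bidual bookkeeping, at the price of invoking the JRZ weak$^{*}$-continuity lemma (itself a local-reflexivity-type tool). Yours buys conceptual economy: it exhibits Corollary \ref{1.20}(a) as an immediate consequence of the sublattice characterization of positive $p$-integrality together with the standard principle of local reflexivity, and it makes transparent why no approximation property is needed in (a).
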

\begin{proof}
(a). Given any $A\in \mathcal{F}_{+}(Y,X^{*}), B\in \mathcal{F}(E^{*},F)$. We may assume that $F$ is finite-dimensional. Let $\epsilon>0$. By \cite[Lemma 3.1]{JRZ},
there exists a $weak^{*}$-continuous operator $C:E^{*}\rightarrow F$ such that $\|C\|\leq (1+\epsilon)\|B\|$ and $C|_{S^{*}AY}=B|_{S^{*}AY}$. Let $D:F^{*}\rightarrow E$ be an operator such that $D^{*}=C$. Since $S$ is latticially $p$-integral, we get
$$\widetilde{\nu}_{p}(A^{*}J_{X}SD)\leq \|A^{*}J_{X}\|\widetilde{i}_{p}(S)\|D\|\leq (1+\epsilon)\|A\|\widetilde{i}_{p}(S)\|B\|.$$
Clearly, $BS^{*}A=(A^{*}J_{X}SD)^{*}J_{Y}$. By Proposition \ref{1.1} (e), we get
\begin{align*}
\widetilde{\nu}^{p}(BS^{*}A)&\leq \widetilde{\nu}^{p}((A^{*}J_{X}SD)^{*})\\
&\leq \widetilde{\nu}_{p}(A^{*}J_{X}SD)\\
&\leq (1+\epsilon)\|A\|\widetilde{i}_{p}(S)\|B\|.
\end{align*}
Letting $\epsilon\rightarrow 0$, we get $$\widetilde{\nu}^{p}(BS^{*}A)\leq \|A\|\widetilde{i}_{p}(S)\|B\|.$$
Hence, $S^{*}$ is positively $p$-integral and $\widetilde{i}^{p}(S^{*})\leq \widetilde{i}_{p}(S)$.

(b). Given $M\in FIN(E^{*})$ and $\epsilon>0$. We let $L:={}^{\perp}\!M=\{u\in E:\langle u^{*},u\rangle=0$ for all $u^{*}\in M\}$. Then $L\in COFIN(E)$. Note that
$Q_{L}^{*}:(E/L)^{*}\rightarrow E^{*}$ is an isometric embedding and the range of $Q_{L}^{*}$ is $L^{\perp}=M$. Let us define an operator $A:M\rightarrow (E/L)^{*}$ by $Au^{*}=(Q_{L}^{*})^{-1}(u^{*})(u^{*}\in M)$. Clearly, $\|A\|=1$ and $Q^{*}_{L}A=i_{M}$. By Proposition \ref{1.1} $(f)$ and Theorem \ref{1.30}, we get
\begin{align*}
\widetilde{\nu}_{p}(T^{*}i_{M})&=\widetilde{\nu}_{p}(T^{*}Q^{*}_{L}A)\\
&\leq \widetilde{\nu}_{p}(T^{*}Q^{*}_{L})\\
&\leq \widetilde{\nu}^{p}(Q_{L}T)\\
&\leq \widetilde{i}^{p}(T).
\end{align*}
By Theorem \ref{2.3}, $T^{*}$ is latticially $p$-integral and $\widetilde{i}_{p}(T^{*})\leq \widetilde{i}^{p}(T).$

\end{proof}

The following result is immediate from Definition \ref{1.7} and Definition \ref{1.31}.

\begin{lemma}\label{1.8}
\item[(a)]If $S^{**}:E^{**}\rightarrow X^{**}$ is latticially $p$-integral, then so is $S$. In this case,
$\widetilde{i}_{p}(S)\leq \widetilde{i}_{p}(S^{**}).$
\item[(b)]If $T^{**}:X^{**}\rightarrow E^{**}$ is positively $p$-integral, then so is $T$. In this case,
$\widetilde{i}^{p}(T)\leq \widetilde{i}^{p}(T^{**}).$
\end{lemma}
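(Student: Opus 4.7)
The plan is to unwrap Definitions \ref{1.7} and \ref{1.31} and observe that, thanks to the finite-rank hypothesis on the bracketing operators, every admissible pair for $S$ (respectively $T$) can be lifted to an admissible pair for $S^{**}$ (respectively $T^{**}$) with the same operator norms and with positivity preserved on the required side. The factorization identities
\begin{equation*}
J_{Y}(BSA)=B^{**}J_{X}SA=B^{**}S^{**}J_{E}A \qquad \text{and} \qquad J_{F}(RTS)=R^{**}J_{E}TS=R^{**}T^{**}J_{X}S
\end{equation*}
will then transfer the bounds directly.

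For part (a), fix $A\in\mathcal{F}(F,E)$ and $B\in\mathcal{F}(X,Y)_{+}$. Since $B$ has finite rank, writing $B=\sum_{i=1}^{n}x_{i}^{*}\otimes y_{i}$ shows that $B^{**}:X^{**}\to Y^{**}$ is finite-rank with $\|B^{**}\|=\|B\|$, is positive (taking adjoints preserves the positive cone), and has range inside $J_{Y}(Y)$. Hence $\widetilde{B}:=J_{Y}^{-1}B^{**}:X^{**}\to Y$ is a well-defined positive finite-rank operator of norm $\|B\|$, positivity following from the fact that $J_{Y}$ is an isometric lattice embedding. Setting $\widetilde{A}:=J_{E}A\in\mathcal{F}(F,E^{**})$ and combining the first identity above with the injectivity of $J_{Y}$, we obtain $BSA=\widetilde{B}S^{**}\widetilde{A}$. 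Applying the hypothesis on $S^{**}$ yields
\begin{equation*}
\widetilde{\nu}_{p}(BSA)=\widetilde{\nu}_{p}(\widetilde{B}S^{**}\widetilde{A})\leq\widetilde{i}_{p}(S^{**})\,\|\widetilde{B}\|\,\|\widetilde{A}\|=\widetilde{i}_{p}(S^{**})\,\|B\|\,\|A\|,
\end{equation*}
so $\widetilde{i}_{p}(S)\leq\widetilde{i}_{p}(S^{**})$.

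Part (b) is symmetric. Given $S\in\mathcal{F}_{+}(Y,X)$ and $R\in\mathcal{F}(E,F)$, put $\widetilde{S}:=J_{X}S:Y\to X^{**}$, which is positive finite-rank with $\|\widetilde{S}\|=\|S\|$ since $J_{X}$ is a positive isometric embedding. Because $R$ has finite rank, $R^{**}$ has range inside $J_{F}(F)$, and $\widetilde{R}:=J_{F}^{-1}R^{**}:E^{**}\to F$ is finite-rank with $\|\widetilde{R}\|=\|R\|$. The second identity above gives $RTS=\widetilde{R}T^{**}\widetilde{S}$, and the hypothesis on $T^{**}$ delivers $\widetilde{\nu}^{p}(RTS)\leq\widetilde{i}^{p}(T^{**})\,\|R\|\,\|S\|$, whence $\widetilde{i}^{p}(T)\leq\widetilde{i}^{p}(T^{**})$.

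There is no deep obstacle here; the only steps warranting attention are that $B^{**}$ and $R^{**}$ actually land in $J_{Y}(Y)$ and $J_{F}(F)$ respectively (which uses only that $B,R$ have finite rank, so their biadjoints factor through their finite-dimensional ranges) and that positivity survives the lifting, which reduces to the fact that $J_{X},J_{Y}$ are lattice embeddings and that the biadjoint of a positive operator is positive.
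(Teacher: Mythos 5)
Your proof is correct and follows exactly the route the paper intends: the paper states the lemma is immediate from Definitions \ref{1.7} and \ref{1.31}, and your argument simply spells out that immediacy by lifting the bracketing finite-rank operators $A,B$ (resp.\ $S,R$) via the canonical embeddings, using $B^{**}J_{X}=J_{Y}B$, $R^{**}J_{E}=J_{F}R$ and the fact that biadjoints of positive finite-rank operators are positive, finite-rank, norm-preserving and land in the image of the canonical embedding. No discrepancy with the paper's approach.
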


Combining Corollary \ref{1.20} and Lemma \ref{1.8}, we obtain the following two corollaries.

\begin{corollary}\label{1.21}
Suppose that $X^{**}$ has the $PMAP$. The following are equivalent for an operator $S:E\rightarrow X$:
\item[(1)]$S$ is latticially $p$-integral.
\item[(2)]$S^{*}$ is positively $p$-integral.
\item[(3)]$S^{**}$ is latticially $p$-integral.

\noindent In this case, $\widetilde{i}_{p}(S)=\widetilde{i}^{p}(S^{*})=\widetilde{i}_{p}(S^{**}).$
\end{corollary}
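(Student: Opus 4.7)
The plan is to close a short circle of implications by concatenating Corollary \ref{1.20} and Lemma \ref{1.8}, and then observe that the chained inequalities force equality of all three ideal norms.

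First, I would establish $(1)\Rightarrow(2)$ as a direct application of Corollary \ref{1.20}(a): if $S:E\to X$ is latticially $p$-integral, then $S^{*}:X^{*}\to E^{*}$ is positively $p$-integral with $\widetilde{i}^{p}(S^{*})\leq \widetilde{i}_{p}(S)$. Note that this step does not require the $PMAP$ hypothesis at all.

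Next, for $(2)\Rightarrow(3)$, I would apply Corollary \ref{1.20}(b) to the operator $S^{*}:X^{*}\to E^{*}$. That result needs the hypothesis that the dual lattice of the domain has the $PMAP$; since the domain here is $X^{*}$, the required hypothesis is precisely that $X^{**}$ has the $PMAP$, which is the standing assumption of the corollary. This yields that $(S^{*})^{*}=S^{**}:E^{**}\to X^{**}$ is latticially $p$-integral with $\widetilde{i}_{p}(S^{**})\leq \widetilde{i}^{p}(S^{*})$. This is the one place where the $PMAP$ assumption is genuinely used, and so is the main (though quite mild) point of the argument.

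Finally, $(3)\Rightarrow(1)$ is immediate from Lemma \ref{1.8}(a): $S^{**}$ being latticially $p$-integral implies $S$ is as well, with $\widetilde{i}_{p}(S)\leq \widetilde{i}_{p}(S^{**})$. Chaining the three inequalities gives
\[
\widetilde{i}_{p}(S)\leq \widetilde{i}_{p}(S^{**})\leq \widetilde{i}^{p}(S^{*})\leq \widetilde{i}_{p}(S),
\]
so equality holds throughout, proving the norm identity $\widetilde{i}_{p}(S)=\widetilde{i}^{p}(S^{*})=\widetilde{i}_{p}(S^{**})$. There is no real obstacle here beyond making sure the $PMAP$ hypothesis is invoked on the correct lattice ($X^{**}$, not $X$) when passing from $(2)$ to $(3)$.
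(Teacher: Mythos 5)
Your proposal is correct and coincides with the paper's intended argument: the paper obtains this corollary precisely by combining Corollary \ref{1.20}(a), Corollary \ref{1.20}(b) applied to $S^{*}$ (where the $PMAP$ hypothesis on $X^{**}$ is exactly what part (b) requires for the domain lattice $X^{*}$), and Lemma \ref{1.8}(a), with the norm equalities following from the chained inequalities just as you state.
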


\begin{corollary}\label{1.219}
Suppose that $X^{*}$ has the $PMAP$. The following are equivalent for an operator $T:X\rightarrow E$:
\item[(1)]$T$ is positively $p$-integral.
\item[(2)]$T^{*}$ is latticially $p$-integral.
\item[(3)]$T^{**}$ is positively $p$-integral.

\noindent In this case, $\widetilde{i}^{p}(T)=\widetilde{i}_{p}(T^{*})=\widetilde{i}^{p}(T^{**}).$
\end{corollary}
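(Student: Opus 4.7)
The plan is to prove Corollary \ref{1.219} by chaining the one-sided implications already proved in Corollary \ref{1.20} and Lemma \ref{1.8}, using the $PMAP$ hypothesis on $X^{*}$ exactly once (where duality is not automatic) and obtaining the quantitative equalities as a byproduct.

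First I would prove $(1)\Rightarrow(2)$. Assuming that $T:X\to E$ is positively $p$-integral, Corollary \ref{1.20}(b) applies directly because its hypothesis is precisely that $X^{*}$ has the $PMAP$; this yields that $T^{*}:E^{*}\to X^{*}$ is latticially $p$-integral and, moreover, $\widetilde{i}_{p}(T^{*})\leq \widetilde{i}^{p}(T)$.

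Next, for $(2)\Rightarrow(3)$, I would apply Corollary \ref{1.20}(a) to the operator $T^{*}:E^{*}\to X^{*}$, viewed as going from the Banach space $E^{*}$ into the Banach lattice $X^{*}$. The conclusion is that its adjoint $T^{**}:X^{**}\to E^{**}$ is positively $p$-integral, with $\widetilde{i}^{p}(T^{**})\leq \widetilde{i}_{p}(T^{*})$. Note this step does not require any additional $PMAP$ assumption, since Corollary \ref{1.20}(a) is unconditional.

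For $(3)\Rightarrow(1)$ I would simply invoke Lemma \ref{1.8}(b): if $T^{**}$ is positively $p$-integral, then so is $T$, and $\widetilde{i}^{p}(T)\leq \widetilde{i}^{p}(T^{**})$. Chaining the three inequalities produces the loop
\begin{equation*}
\widetilde{i}^{p}(T)\;\leq\;\widetilde{i}^{p}(T^{**})\;\leq\;\widetilde{i}_{p}(T^{*})\;\leq\;\widetilde{i}^{p}(T),
\end{equation*}
which forces equality throughout, giving $\widetilde{i}^{p}(T)=\widetilde{i}_{p}(T^{*})=\widetilde{i}^{p}(T^{**})$ as required.

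There is no genuine obstacle here: the statement is a formal consequence of the preceding duality and bidual results. The only point that requires care is making sure the $PMAP$ hypothesis on $X^{*}$ is used at the right spot, namely in the passage $(1)\Rightarrow(2)$ via Corollary \ref{1.20}(b); the other two implications are unconditional, so the three-way equivalence closes up without needing stronger assumptions on higher duals of $X$.
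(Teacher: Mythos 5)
Your proof is correct and follows exactly the paper's intended argument: the paper states the corollary as an immediate consequence of "combining Corollary \ref{1.20} and Lemma \ref{1.8}," which is precisely the cycle $(1)\Rightarrow(2)\Rightarrow(3)\Rightarrow(1)$ you carry out, with the norm equalities forced by the chained inequalities.
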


Next we present an important example of positively $p$-integral operators.

\begin{theorem}\label{3.6}
Let $(\Omega, \Sigma, \mu)$ be a probability measure space and $1\leq p<\infty$. Then the inclusion map $i_{p}: L_{p^{*}}(\mu)\rightarrow L_{1}(\mu)$ is positively $p$-integral with $\widetilde{i}^{p}(i_{p})\leq 1$.
\end{theorem}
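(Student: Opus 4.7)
The plan is to invoke Theorem \ref{3.2}, which applies because $L_{p^{*}}(\mu)$ is order complete. It suffices therefore to show that $\widetilde{\nu}^{p}(Q_{L} i_{p} i_{G}) \leq 1$ for every $G \in LDim(L_{p^{*}}(\mu))$ and every $L \in COFIN(L_{1}(\mu))$. Because $\|Q_{L}\| \leq 1$ and positively $p$-nuclear operators enjoy the right positive ideal property from Proposition \ref{1.1}(a), the problem reduces to showing $\widetilde{\nu}^{p}(i_{p} i_{G}) \leq 1$ for each finite-dimensional sublattice $G$ of $L_{p^{*}}(\mu)$.

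\noindent To this end, I would use the standard fact that any finite-dimensional Banach sublattice of a Banach lattice admits a basis of pairwise disjoint positive elements. So fix a disjoint positive basis $\{h_{1},\dots,h_{n}\}$ of $G$, normalized by $\|h_{j}\|_{p^{*}} = 1$, and let $A_{j} := \operatorname{supp}(h_{j})$ (pairwise disjoint). Let $\{y_{j}^{*}\}_{j=1}^{n} \subset G^{*}$ be the biorthogonal functionals defined by $y_{j}^{*}(h_{k}) = \delta_{jk}$; since the $h_{j}$ are pairwise disjoint and positive, any $g \in G_{+}$ restricted to $A_{j}$ forces the $h_{j}$-coefficient to be nonnegative, so each $y_{j}^{*}$ is positive. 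Then $i_{p} i_{G} = \sum_{j=1}^{n} y_{j}^{*} \otimes h_{j}$ is a positively $p$-nuclear representation (with $h_{j}$ now viewed in $L_{1}(\mu)$).

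\noindent The two norm estimates are short. Because the $h_{j}$ are disjoint, for $g = \sum_{j} \alpha_{j} h_{j}$ we have $\|g\|_{p^{*}}^{p^{*}} = \sum_{j} |\alpha_{j}|^{p^{*}}$, hence
\[
\|(y_{j}^{*})_{j=1}^{n}\|_{p^{*}}^{w} \;=\; \sup_{g \in B_{G}} \Bigl(\sum_{j} |y_{j}^{*}(g)|^{p^{*}}\Bigr)^{1/p^{*}} \;=\; \sup_{g \in B_{G}} \|g\|_{p^{*}} \;=\; 1.
\]
On the other hand, H\"{o}lder's inequality applied to $h_{j} = h_{j} \cdot \mathbf{1}_{A_{j}}$ gives
\[
\|h_{j}\|_{1} \;\leq\; \|h_{j}\|_{p^{*}} \, \mu(A_{j})^{1/p} \;=\; \mu(A_{j})^{1/p},
\]
and summing while using the disjointness of the supports yields
\[
\|(h_{j})_{j=1}^{n}\|_{p}^{p} \;=\; \sum_{j} \|h_{j}\|_{1}^{p} \;\leq\; \sum_{j} \mu(A_{j}) \;\leq\; \mu(\Omega) \;=\; 1.
\]
Multiplying the two bounds gives $\widetilde{\nu}^{p}(i_{p} i_{G}) \leq 1$, which completes the argument.

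\noindent I do not foresee a genuine obstacle; the only place where care is needed is the case $p = 1$ (so $p^{*} = \infty$), where the $L_{p^{*}}$-norm on $G$ becomes $\|g\|_{\infty} = \max_{j} |\alpha_{j}|$ on the disjoint basis, and the H\"{o}lder step collapses to $\|h_{j}\|_{1} \leq \mu(A_{j})$; both estimates remain valid, so the argument is uniform in $1 \leq p < \infty$.
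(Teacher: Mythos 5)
Your argument is correct, but it follows a genuinely different route from the paper's. The paper verifies Definition \ref{1.31} directly: given a finite-rank $R:L_{1}(\mu)\rightarrow E$ and a positive finite-rank $S:X\rightarrow L_{p^{*}}(\mu)$, it approximates $Ri_{p}$ in nuclear norm by $RQ_{\tau}$, where $Q_{\tau}$ is the averaging operator associated with a finite measurable partition $\tau$; the key ingredients are Lemma \ref{3.1} (the computation $\widetilde{\nu}^{p}(Q_{\tau})=1$, which is in spirit the same H\"older estimate you use, applied to normalized indicators $\chi_{A_{i}}/\mu(A_{i})^{1/p^{*}}$), the step-function approximation Lemma \ref{3.5}, and the nuclear-norm approximation Lemma \ref{3.3}. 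You instead invoke Theorem \ref{3.2}, which applies since $L_{p^{*}}(\mu)$ is order complete for a probability measure, reduce to $\widetilde{\nu}^{p}(Q_{L}i_{p}i_{G})\leq 1$ for finite-dimensional sublattices $G$, and then exploit the standard structure of such $G$ (a basis of pairwise disjoint positive atoms $h_{j}$, whose biorthogonal functionals are positive on $G$) together with H\"older and disjointness of supports. A point worth making explicit is that the reduction to sublattices is what makes your representation legitimate: the positive functionals $y_{j}^{*}$ only need to be defined on $G$, so no positive Hahn--Banach extension to $L_{p^{*}}(\mu)$ is required; also, the left composition with $Q_{L}$ uses the arbitrary-left-factor half of Proposition \ref{1.1}(a), not positivity. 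What each approach buys: yours is shorter and dispenses with the three approximation lemmas, at the cost of relying on Theorem \ref{3.2} (hence on order completeness and Lemma \ref{1.4}) and on the Judin-type description of finite-dimensional sublattices; the paper's argument works straight from the definition, needs no structure theory for $G$, and its partition operators $Q_{\tau}$ make the analogy with the classical proof that $C(K)\rightarrow L_{p}(\mu)$ is $p$-integral transparent. Your remark on the case $p=1$ ($p^{*}=\infty$) is also correct as stated.
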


To prove Theorem \ref{3.6}, we need the following three elementary lemmas.

Let $\tau=(A_{i})_{i=1}^{n}$ be a partition of a probability measure space $(\Omega, \Sigma, \mu)$. We define an operator $$Q_{\tau}: L_{p^{*}}(\mu)\rightarrow L_{1}(\mu), \quad g\mapsto \sum_{i=1}^{n}\frac{\int_{A_{i}}gd\mu}{\mu(A_{i})}\chi_{A_{i}},$$ where $\frac{\int_{A_{i}}gd\mu}{\mu(A_{i})}=0$ if $\mu(A_{i})=0$.
It is easy to see that $\|Q_{\tau}\|=1$.

\begin{lemma}\label{3.1}
$\widetilde{\nu}^{p}(Q_{\tau})=1.$
\end{lemma}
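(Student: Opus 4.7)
The plan is to establish both inequalities $\widetilde{\nu}^{p}(Q_{\tau}) \ge 1$ and $\widetilde{\nu}^{p}(Q_{\tau}) \le 1$ directly.

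For the lower bound, I will use the fact that $\widetilde{\nu}^{p}$ dominates the operator norm on $\widetilde{\mathcal{N}}^{p}$: given any positively $p$-nuclear representation $T=\sum_{j} x_{j}^{*}\otimes u_{j}$ with $(x_{j}^{*})_{j}\in l_{p^{*}}^{w}(X^{*})_{+}$ and $(u_{j})_{j}\in l_{p}(E)$, the estimate $|\langle x_{j}^{*},x\rangle|\le \langle x_{j}^{*},|x|\rangle$ together with H\"older's inequality yields $\|Tx\|\le \|x\|\,\|(x_{j}^{*})_{j}\|_{p^{*}}^{w}\,\|(u_{j})_{j}\|_{p}$, hence $\|T\|\le \widetilde{\nu}^{p}(T)$. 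Since $\|Q_{\tau}\|=1$, this gives $\widetilde{\nu}^{p}(Q_{\tau})\ge 1$.

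For the upper bound, I will exhibit an explicit positively $p$-nuclear representation of $Q_{\tau}$ of norm at most $1$. Set, for each $i$ with $\mu(A_{i})>0$,
\begin{equation*}
x_{i}^{*} := \mu(A_{i})^{-1/p}\chi_{A_{i}}\in L_{p^{*}}(\mu)^{*}, \qquad u_{i} := \mu(A_{i})^{-1/p^{*}}\chi_{A_{i}}\in L_{1}(\mu),
\end{equation*}
(and zero if $\mu(A_{i})=0$), viewing each $x_{i}^{*}$ as the positive functional $g\mapsto \mu(A_{i})^{-1/p}\int_{A_{i}}g\,d\mu$ (positive in $L_{p^{*}}(\mu)^{*}$ because it is defined via integration against a nonnegative function). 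A direct check shows $Q_{\tau}=\sum_{i=1}^{n} x_{i}^{*}\otimes u_{i}$. Then $\|u_{i}\|_{L_{1}}=\mu(A_{i})^{1/p}$, so $\|(u_{i})_{i=1}^{n}\|_{p}=\bigl(\sum_{i}\mu(A_{i})\bigr)^{1/p}=1$. To bound $\|(x_{i}^{*})_{i=1}^{n}\|_{p^{*}}^{w}$ (for $p>1$), I will apply H\"older on each block: for $g$ with $\|g\|_{p^{*}}\le 1$,
\begin{equation*}
|\langle x_{i}^{*},g\rangle|^{p^{*}} = \mu(A_{i})^{-p^{*}/p}\Bigl|\int_{A_{i}}g\,d\mu\Bigr|^{p^{*}} \le \mu(A_{i})^{-p^{*}/p}\,\|g\chi_{A_{i}}\|_{p^{*}}^{p^{*}}\,\mu(A_{i})^{p^{*}/p} = \|g\chi_{A_{i}}\|_{p^{*}}^{p^{*}},
\end{equation*}
so that $\sum_{i}|\langle x_{i}^{*},g\rangle|^{p^{*}}\le \|g\|_{p^{*}}^{p^{*}}\le 1$ by disjointness of the $A_{i}$'s. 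The case $p=1$ is handled similarly via $\|(x_{i}^{*})\|_{\infty}^{w}=\sup_{i}\|x_{i}^{*}\|=1$. Combining, $\widetilde{\nu}^{p}(Q_{\tau})\le 1$.

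There is no real obstacle here: the only delicate point is choosing the scaling exponents $-1/p$ and $-1/p^{*}$ so that the product $x_{i}^{*}\otimes u_{i}$ reproduces the weight $\mu(A_{i})^{-1}$ in $Q_{\tau}$ while simultaneously making both sequence norms equal to $1$. The positivity of each $x_{i}^{*}$ is automatic from the fact that the representing density $\mu(A_{i})^{-1/p}\chi_{A_{i}}$ is nonnegative.
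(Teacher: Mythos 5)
Your proof is correct and follows essentially the same route as the paper: you use the identical representation $Q_{\tau}=\sum_{i}\varphi_{i}\otimes f_{i}$ with the same scalings $\mu(A_{i})^{-1/p}$ and $\mu(A_{i})^{-1/p^{*}}$, the same blockwise H\"older estimate giving $\|(\varphi_{i})_{i}\|_{p^{*}}^{w}\le 1$, and the lower bound from $\|Q_{\tau}\|=1$ (you merely make explicit the inequality $\|\cdot\|\le\widetilde{\nu}^{p}(\cdot)$ that the paper uses implicitly).
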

\begin{proof}
Let $f_{i}=\frac{\chi_{A_{i}}}{\mu(A_{i})^{\frac{1}{p^{*}}}}(i=1,2,\cdots,n)$. Then $\|(f_{i})_{i=1}^{n}\|_{p}=1$. For each $i$, we define $\varphi_{i}\in (L_{p^{*}}(\mu))^{*}$ by $\langle \varphi_{i}, g\rangle=\frac{\int_{A_{i}}gd\mu}{\mu(A_{i})^{\frac{1}{p}}}(g\in L_{p^{*}}(\mu)).$ Then $Q_{\tau}=\sum\limits_{i=1}^{n}\varphi_{i}\otimes f_{i}$. Let us define an operator $T: L_{p^{*}}(\mu)\rightarrow l_{p^{*}}$ by $Tg=(\langle \varphi_{i},g\rangle)_{i=1}^{n}$ for $g\in L_{p^{*}}(\mu)$. Note that
$$|\langle\varphi_{i},g\rangle|\leq \frac{\int_{A_{i}}|g|d\mu}{\mu(A_{i})^{\frac{1}{p}}}
\leq \frac{(\int_{\Omega}|g\chi_{A_{i}}|^{p^{*}}d\mu)^{\frac{1}{p^{*}}}(\int_{\Omega}\chi_{A_{i}}d\mu)^{\frac{1}{p}}}{\mu(A_{i})^{\frac{1}{p}}}
=(\int_{A_{i}}|g|^{p^{*}}d\mu)^{\frac{1}{p^{*}}}.$$
Hence $$\sum_{i=1}^{n}|\langle\varphi_{i},g\rangle|^{p^{*}}\leq \sum_{i=1}^{n}\int_{A_{i}}|g|^{p^{*}}d\mu=\int_{\Omega}|g|^{p^{*}}d\mu.$$
This implies $$\|(\varphi_{i})_{i=1}^{n}\|^{w}_{p^{*}}=\|T\|\leq 1.$$
Consequently $$\widetilde{\nu}^{p}(Q_{\tau})\leq \|(\varphi_{i})_{i=1}^{n}\|^{w}_{p^{*}}\cdot \|(f_{i})_{i=1}^{n}\|_{p}\leq 1.$$
Since $\|Q_{\tau}\|=1$, we get $\widetilde{\nu}^{p}(Q_{\tau})=1.$
\end{proof}

The following lemma may be known. For the sake of completeness, we include the proof here.

\begin{lemma}\label{3.5}
Let $f_{1},f_{2},\cdots,f_{n}\in L_{\infty}(\mu)$. Then, for every $\epsilon>0$, there exists a partition $\tau=(A_{i})_{i=1}^{m}$ of $\Omega$ such that
$$\|f_{j}-\sum_{i=1}^{m}\frac{\int_{A_{i}}f_{j}d\mu}{\mu(A_{i})}\chi_{A_{i}}\|_{p}<\epsilon, \quad j=1,2,\cdots,n.$$
\end{lemma}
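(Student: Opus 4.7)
\smallskip

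The plan is to exploit boundedness of the $f_j$'s to construct a partition on which each $f_j$ is almost constant, so the conditional average of $f_j$ on each atom differs pointwise from $f_j$ by a controlled amount, which then gives a small $L_p$-error since $\mu$ is a probability measure.

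Concretely, set $M := \max_{1\leq j\leq n}\|f_j\|_{\infty}$ and choose $\delta\in(0,\epsilon)$. Partition the interval $[-M,M]$ into finitely many Borel pieces $I_1,\dots,I_K$ each of length $<\delta$, and for every multi-index $\alpha=(k_1,\dots,k_n)\in\{1,\dots,K\}^n$ define
$$A_\alpha := \bigcap_{j=1}^{n} f_j^{-1}(I_{k_j}).$$
These sets form a measurable partition $\tau=(A_\alpha)_\alpha$ of $\Omega$ into at most $K^n$ pieces (after discarding or relabelling empty ones). Enumerate the nonempty ones as $A_1,\dots,A_m$.

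On each $A_i$, every $f_j$ takes values in an interval of length $<\delta$, so for a.e.\ $\omega\in A_i$ with $\mu(A_i)>0$ one has
$$\Bigl|f_j(\omega)-\frac{\int_{A_i}f_j\,d\mu}{\mu(A_i)}\Bigr|\leq \delta,$$
while on $A_i$ with $\mu(A_i)=0$ the convention in the statement makes the corresponding term zero and contributes nothing to the $L_p$-norm. Hence, using that $\mu$ is a probability measure,
$$\Bigl\|f_j-\sum_{i=1}^{m}\frac{\int_{A_i}f_j\,d\mu}{\mu(A_i)}\chi_{A_i}\Bigr\|_p \leq \delta\,\mu(\Omega)^{1/p} = \delta < \epsilon$$
for every $j=1,\dots,n$, as required.

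There is no genuine obstacle: the only slightly delicate point is to take the common refinement of the preimage partitions for all $n$ functions simultaneously (so that a single partition works for every $f_j$), and to handle the null atoms via the convention already stated in the paper. Everything else is a direct application of the pointwise bound combined with $\|g\|_p\leq \|g\|_\infty\mu(\Omega)^{1/p}=\|g\|_\infty$.
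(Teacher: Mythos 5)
Your proof is correct and follows essentially the same route as the paper: partition the range into intervals of small length, take the common refinement of the preimage partitions (the paper writes this out for $n=2$ with the sets $A_{ij}$ and declares the general case analogous, while you handle general $n$ with multi-indices), and conclude from the pointwise oscillation bound that the $L_p$-error is at most $\delta$ since $\mu$ is a probability measure. The only cosmetic point is to fix everywhere-bounded representatives of the $f_j$ (as the paper does by "we may assume $f_1,f_2$ are bounded") so that the preimage pieces genuinely cover $\Omega$.
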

\begin{proof}
We only prove the conclusion for $n=2$. Other cases are analogous.

We may assume that $f_{1},f_{2}$ are bounded. We set
$$\alpha=\min(\min_{t\in \Omega}f_{1}(t), \min_{t\in \Omega}f_{2}(t))$$
and
$$\beta=\max(\max_{t\in \Omega}f_{1}(t), \max_{t\in \Omega}f_{2}(t)).$$
We choose $a_{0}<a_{1}<\cdots<a_{m}$ such that $$[\alpha,\beta]\subseteq \bigcup\limits_{i=1}^{m}(a_{i-1},a_{i}], \quad a_{i}-a_{i-1}<\epsilon, i=1,2,\cdots,m.$$
Let $A_{ij}=f_{1}^{-1}((a_{i-1},a_{i}])\cap f_{2}^{-1}((a_{j-1},a_{j}])(i,j=1,2,\cdots,m)$. Then $(A_{ij})_{i,j=1}^{m}$ is a partition of $\Omega$.

Note that $$a_{i-1}\mu(A_{ij})\leq \int_{A_{ij}}f_{1}d\mu\leq a_{i}\mu(A_{ij}),\quad i,j=1,2,\cdots,m$$ and hence
$$|f_{1}(t)-\frac{\int_{A_{ij}}f_{1}d\mu}{\mu(A_{ij})}|\leq \epsilon, \quad t\in A_{ij}, i,j=1,2,\cdots,m.$$
This means
$$\int_{\Omega}|f_{1}-\sum_{i,j=1}^{m}\frac{\int_{A_{ij}}f_{1}d\mu}{\mu(A_{ij})}\chi_{A_{ij}}|^{p}d\mu=\sum_{i,j=1}^{m}\int_{A_{ij}}
|f_{1}-\frac{\int_{A_{ij}}f_{1}d\mu}{\mu(A_{ij})}|^{p}d\mu\leq \epsilon^{p}.$$
That is $$\|f_{1}-\sum_{i,j=1}^{m}\frac{\int_{A_{ij}}f_{1}d\mu}{\mu(A_{ij})}\chi_{A_{ij}}\|_{p}\leq \epsilon.$$
Similarly $$\|f_{2}-\sum_{i,j=1}^{m}\frac{\int_{A_{ij}}f_{2}d\mu}{\mu(A_{ij})}\chi_{A_{ij}}\|_{p}\leq \epsilon.$$

\end{proof}

\begin{lemma}\label{3.3}
Let $E$ be a Banach space and let $T\in \mathcal{F}(L_{1}(\mu),E)$. Then, for every $\epsilon>0$, there exists a partition $\tau=(A_{i})_{i=1}^{m}$ of $\Omega$ such that $\nu(Ti_{p}-TQ_{\tau})<\epsilon.$
\end{lemma}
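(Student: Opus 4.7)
The plan is to exploit the finite-rank structure of $T$ together with Lemma \ref{3.5}. Since $T\in\mathcal{F}(L_1(\mu),E)$ and $L_1(\mu)^*=L_\infty(\mu)$, I would first write
$$T=\sum_{k=1}^{n}g_k\otimes e_k,\qquad g_k\in L_\infty(\mu),\ e_k\in E.$$
With this representation in hand, I would compute the action of $Ti_p-TQ_\tau$ on a test function $g\in L_{p^*}(\mu)$: straightforward manipulation gives $Ti_p(g)=\sum_k(\int g_k g\,d\mu)e_k$, while
$$TQ_\tau(g)=\sum_k\Bigl(\int f_k\,g\,d\mu\Bigr)e_k,\qquad f_k:=\sum_i\frac{\int_{A_i}g_k\,d\mu}{\mu(A_i)}\chi_{A_i},$$
so $Ti_p-TQ_\tau=\sum_{k=1}^n(g_k-f_k)\otimes e_k$, viewed as an operator from $L_{p^*}(\mu)$ to $E$ via integration.

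Since $(L_{p^*}(\mu))^*=L_p(\mu)$ and $\mu$ is finite (so the $g_k$, being bounded, sit inside $L_p(\mu)$), the above gives the nuclear estimate
$$\nu(Ti_p-TQ_\tau)\le\sum_{k=1}^n\|g_k-f_k\|_{L_p(\mu)}\|e_k\|_E.$$
The next step is to choose $\tau$ so that every $\|g_k-f_k\|_{L_p(\mu)}$ is arbitrarily small. This is exactly what Lemma \ref{3.5} provides: applied to the finite collection $g_1,\dots,g_n\in L_\infty(\mu)$ with tolerance $\epsilon':=\epsilon/(1+\sum_k\|e_k\|)$, it yields a single partition $\tau=(A_i)_{i=1}^m$ of $\Omega$ with $\|g_k-f_k\|_{L_p(\mu)}<\epsilon'$ for all $k=1,\dots,n$. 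Plugging this into the nuclear estimate delivers $\nu(Ti_p-TQ_\tau)<\epsilon$, which is the desired conclusion.

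There is no serious obstacle here; the only things to check are the identification of $Ti_p-TQ_\tau$ as a nuclear operator through the representation of $T$, and the observation that applying Lemma \ref{3.5} simultaneously to $g_1,\dots,g_n$ produces a \emph{common} partition (which is already built into the statement of Lemma \ref{3.5}, since its proof handles any finite family by refining the partition, as the $n=2$ case exhibits). The mild point worth mentioning explicitly is that $g_k\in L_\infty(\mu)$ and $\mu(\Omega)<\infty$ ensure the $L_p$-norm appearing in Lemma \ref{3.5} dominates the functional norm on $L_{p^*}(\mu)$, so the two uses of the norm $\|\cdot\|_p$ (as an $L_p$-norm of functions and as a dual norm of $L_{p^*}(\mu)$) are compatible.
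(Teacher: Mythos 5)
Your proposal is correct and follows essentially the same route as the paper: write $T=\sum_k g_k\otimes e_k$ with $g_k\in L_\infty(\mu)$, observe that $Ti_p-TQ_\tau=\sum_k(g_k-Q_\tau^{*}g_k)\otimes e_k$ so that $\nu(Ti_p-TQ_\tau)\le\sum_k\|g_k-f_k\|_{L_p(\mu)}\|e_k\|$, and then invoke Lemma \ref{3.5} to pick one partition making all the $L_p$-errors small. Your extra remarks (the common partition, and the identification of the dual norm of $L_{p^*}(\mu)$ with the $L_p$-norm) are exactly the implicit steps in the paper's argument, so nothing is missing.
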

\begin{proof}
We write $T=\sum\limits_{j=1}^{n}f_{j}\otimes u_{j}, f_{j}\in L_{\infty}(\mu), u_{j}\in E, j=1,2,\cdots,n$.
It follows from Lemma \ref{3.5} that there exists a partition $\tau=(A_{i})_{i=1}^{m}$ of $\Omega$ such that
$$\|f_{j}-\sum_{i=1}^{m}\frac{\int_{A_{i}}f_{j}d\mu}{\mu(A_{i})}\chi_{A_{i}}\|_{p}<\frac{\epsilon}{\sum\limits_{i=1}^{n}\|u_{i}\|}, \quad j=1,2,\cdots,n.$$
Hence
\begin{align*}
\nu(Ti_{p}-TQ_{\tau})&\leq \sum\limits_{j=1}^{n}\|f_{j}-Q^{*}_{\tau}f_{j}\|\|u_{j}\|\\
&=\sum\limits_{j=1}^{n}\|f_{j}-\sum_{i=1}^{m}\frac{\int_{A_{i}}f_{j}d\mu}{\mu(A_{i})}\chi_{A_{i}}\|_{p}\|u_{j}\|\\
&<\epsilon.
\end{align*}

\end{proof}

{\em Proof of Theorem \ref{3.6}}.

Given any finite-rank operator $R: L_{1}(\mu)\rightarrow E$ and positive finite-rank operator $S: X\rightarrow L_{p^{*}}(\mu)$. Let $\epsilon>0$. According to Lemma \ref{3.3}, there exists a partition $\tau=(A_{i})_{i=1}^{m}$ of $\Omega$ such that $\nu(Ri_{p}-RQ_{\tau})<\epsilon.$
By Proposition \ref{1.1} and Lemma \ref{3.1}, we get
\begin{align*}
\widetilde{\nu}^{p}(Ri_{p}S)&\leq \widetilde{\nu}^{p}(Ri_{p}S-RQ_{\tau}S)+\widetilde{\nu}^{p}(RQ_{\tau}S)\\
&\leq \widetilde{\nu}^{p}(Ri_{p}-RQ_{\tau})\|S\|+\|R\|\widetilde{\nu}^{p}(Q_{\tau})\|S\|\\
&\leq \widetilde{\nu}^{1}(Ri_{p}-RQ_{\tau})\|S\|+\|R\|\|S\|\\
&\leq 2|\widetilde{\nu}^{1}|(Ri_{p}-RQ_{\tau})\|S\|+\|R\|\|S\|\\
&= 2\nu(Ri_{p}-RQ_{\tau})\|S\|+\|R\|\|S\|\\
&\leq 2\epsilon \|S\|+\|R\|\|S\|\\
\end{align*}

Letting $\epsilon\rightarrow 0$, we get $$\widetilde{\nu}^{p}(Ri_{p}S)\leq \|R\|\|S\|.$$
This completes the proof.  \hfill $\Box$

\begin{remark}
It was known (see \cite[Example 2.9 (b), Corollary 2.8]{DJT} for instance) that the canonical map $j_{p}$ from $C(K)$ to $L_{p}(\mu)$($\mu$- regular Borel measure on compact Hausdorff space $K$) is $p$-integral. O. I. Zhukova \cite{Zhu} strengthened this result and proved that $j_{p}$ is latticially $p$-integral. Although the adjoint of $i_{p}$ is the inclusion map of $L_{\infty}(\mu)$ into $L_{p}(\mu)$, it seems that there is no implication between O. I. Zhukova's result and
Theorem \ref{3.6}, even by Corollaries \ref{1.21} and \ref{1.219}.
\end{remark}

We'll reveal a close relationship between positively $p$-nuclear operators and positively $p$-integral operators. We need two lemmas.

\begin{lemma}\label{3.7}
Suppose that $X^{*}$ has the $PMAP$ and $E$ is a Banach space. Let $T\in \widetilde{\mathcal{N}}^{p}(X,E).$ Then, for every $\epsilon>0$, there exists an operator $R\in \mathcal{F}_{+}(X)$ with $\|R\|\leq 1$ such that $\widetilde{\nu}^{p}(T-TR)<\epsilon$.
\end{lemma}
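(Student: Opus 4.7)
My plan is to approximate $T$ by a suitable finite sum, then exploit the $PMAP$ of $X^*$ through Lemma \ref{3.8} to modify only the dominant part of the representation, while controlling the error from the tail separately. Concretely, fix $\eta>0$ and pick a positively $p$-nuclear representation $T=\sum_{j=1}^{\infty}x_{j}^{*}\otimes u_{j}$ with $(x_{j}^{*})_{j}\in l_{p^{*}}^{w}(X^{*})_{+}$, $(u_{j})_{j}\in l_{p}(E)$, and $\|(x_{j}^{*})_{j}\|_{p^{*}}^{w}\|(u_{j})_{j}\|_{p}\leq(1+\eta)\widetilde{\nu}^{p}(T)$. Choose a positive integer $N$ so that $(\sum_{j>N}\|u_{j}\|^{p})^{1/p}$ is as small as we wish. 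Then apply Lemma \ref{3.8} to the finite (hence compact) set $\{x_{1}^{*},\ldots,x_{N}^{*}\}\subset X^{*}$ to obtain, for any prescribed $\delta>0$, an operator $R\in\mathcal{F}_{+}(X)$ with $\|R\|\leq 1$ satisfying $\|R^{*}x_{j}^{*}-x_{j}^{*}\|<\delta$ for $1\leq j\leq N$. Observe that $TR=\sum_{j}(R^{*}x_{j}^{*})\otimes u_{j}$, so $T-TR=\sum_{j}(x_{j}^{*}-R^{*}x_{j}^{*})\otimes u_{j}$.

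Next, I split $T-TR$ into a \emph{head} $A_{N}=\sum_{j=1}^{N}(x_{j}^{*}-R^{*}x_{j}^{*})\otimes u_{j}$ and a \emph{tail} $B_{N}=\sum_{j>N}(x_{j}^{*}-R^{*}x_{j}^{*})\otimes u_{j}$, and estimate each in the $\widetilde{\nu}^{p}$-norm. For the tail, I further split $B_{N}=\sum_{j>N}x_{j}^{*}\otimes u_{j}-\sum_{j>N}R^{*}x_{j}^{*}\otimes u_{j}$; both summands are genuine positively $p$-nuclear representations since $x_{j}^{*}\geq 0$ and $R^{*}x_{j}^{*}\geq 0$ (as $R$ is positive). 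Using that weak $p^{*}$-sums only decrease when truncating a positive sequence, and that $\|(R^{*}x_{j}^{*})_{j}\|_{p^{*}}^{w}\leq\|R\|\,\|(x_{j}^{*})_{j}\|_{p^{*}}^{w}\leq\|(x_{j}^{*})_{j}\|_{p^{*}}^{w}$, I obtain
\begin{equation*}
\widetilde{\nu}^{p}(B_{N})\leq 2\|(x_{j}^{*})_{j}\|_{p^{*}}^{w}\Bigl(\sum_{j>N}\|u_{j}\|^{p}\Bigr)^{1/p}.
\end{equation*}
For the head $A_{N}$, the functionals $x_{j}^{*}-R^{*}x_{j}^{*}$ need not be positive, so I cannot apply the positively $p$-nuclear definition directly. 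Instead I use Proposition \ref{1.1}(b) and (d) to write
\begin{equation*}
\widetilde{\nu}^{p}(A_{N})\leq\widetilde{\nu}^{1}(A_{N})\leq 2|\widetilde{\nu}^{1}|(A_{N})=2\nu(A_{N})\leq 2\sum_{j=1}^{N}\|x_{j}^{*}-R^{*}x_{j}^{*}\|\,\|u_{j}\|\leq 2\delta\sum_{j=1}^{N}\|u_{j}\|.
\end{equation*}

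Combining the two estimates, $\widetilde{\nu}^{p}(T-TR)\leq 2\|(x_{j}^{*})_{j}\|_{p^{*}}^{w}(\sum_{j>N}\|u_{j}\|^{p})^{1/p}+2\delta\sum_{j=1}^{N}\|u_{j}\|$. Given $\epsilon>0$, I first choose $N$ large enough to make the tail term less than $\epsilon/2$, and then choose $\delta$ small enough (depending on the now-fixed $N$ and $\|u_{1}\|,\ldots,\|u_{N}\|$) so that the head term is less than $\epsilon/2$. The resulting $R$ satisfies the required conclusion. The main obstacle is precisely the lack of positivity of $x_{j}^{*}-R^{*}x_{j}^{*}$: one cannot just plug this into the positively $p$-nuclear formula, and the trick is to transfer the head estimate to the nuclear norm via the chain $\widetilde{\nu}^{p}\leq\widetilde{\nu}^{1}\leq 2|\widetilde{\nu}^{1}|=2\nu$ from Proposition \ref{1.1}, while keeping the tail within the positive framework by splitting it into two sums of positive functionals.
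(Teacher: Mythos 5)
Your proof is correct, and its skeleton is the same as the paper's: split $T-TR$ into a finite head and an infinite tail, use Lemma \ref{3.8} (the $PMAP$ of $X^{*}$ in conjugate-operator form) to make the head small, and make the tail small by choosing $N$ with $(\sum_{j>N}\|u_{j}\|^{p})^{1/p}$ small. The execution differs in two respects worth recording. First, the paper renormalizes the representation by scalars $\xi_{j}\to\infty$ so that $(x_{j}^{*}/\xi_{j})_{j}$ is norm-null and the approximating operator $R$ can be chosen to work for \emph{all} $j$ simultaneously, the head then being estimated directly in the weak $p^{*}$-norm by $\eta N^{1/p^{*}}$; you instead apply Lemma \ref{3.8} only to the finitely many functionals $x_{1}^{*},\dots,x_{N}^{*}$, choosing $\delta$ after $N$, which is simpler and dispenses with the $\xi_{j}$ device (your quantifier order -- representation, then $N$, then $\delta$, then $R$ -- is sound). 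Second, for the head you route the estimate through $\widetilde{\nu}^{p}\le\widetilde{\nu}^{1}\le 2|\widetilde{\nu}^{1}|=2\nu$ (Proposition \ref{1.1}(b)--(d)), and for the tail you split into the two positive sums $\sum_{j>N}x_{j}^{*}\otimes u_{j}$ and $\sum_{j>N}(R^{*}x_{j}^{*})\otimes u_{j}$. This is actually more careful than the paper's estimate, which applies the bound $\widetilde{\nu}^{p}\bigl(\sum_{j}(x_{j}^{*}-R^{*}x_{j}^{*})\otimes u_{j}\bigr)\le\|(x_{j}^{*}-R^{*}x_{j}^{*})_{j}\|_{p^{*}}^{w}\|(u_{j})_{j}\|_{p}$ even though $x_{j}^{*}-R^{*}x_{j}^{*}$ need not be positive, so the definition of $\widetilde{\nu}^{p}$ does not apply verbatim there; the paper's inequality is rescued precisely by the positive/negative splitting you perform, at the cost of a factor $2$ that the choice of constants absorbs. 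The only price of your route is the same harmless factor of $2$ on the head term, which your choice of $\delta$ absorbs, so both arguments yield the lemma.
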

\begin{proof}
Let $\epsilon>0$. We choose $\delta>0$ with $2\delta+\delta(1+\delta)^{2}\widetilde{\nu}^{p}(T)<\epsilon.$ We choose a positively $p$-nuclear representation
$T=\sum\limits_{j=1}^{\infty}x^{*}_{j}\otimes u_{j}$
such that $$\|(x^{*}_{j})_{j=1}^{\infty}\|_{p^{*}}^{w}\|(u_{j})_{j=1}^{\infty}\|_{p}\leq (1+\delta)\widetilde{\nu}^{p}(T).$$
We may assume that $\|(x^{*}_{j})_{j}\|_{p^{*}}^{w}=1$. We choose $1\leq \xi_{j}\rightarrow \infty$ such that
$$\|(\xi_{j}u_{j})_{j=1}^{\infty}\|_{p}\leq (1+\delta)\|(u_{j})_{j=1}^{\infty}\|_{p}.$$
Choose a positive integral $N$ with $(\sum\limits_{j=N+1}^{\infty}\|u_{j}\|^{p})^{\frac{1}{p}}<\delta$ and also choose a positive real $\eta>0$ with $\eta N^{\frac{1}{p^{*}}}<\delta$. Since $X^{*}$ has the $PMAP$, it follows from Lemma \ref{3.8} that there exists an operator $R\in \mathcal{F}_{+}(X)$ with $\|R\|\leq 1$ such that $\|R^{*}(\frac{x^{*}_{j}}{\xi_{j}})-\frac{x^{*}_{j}}{\xi_{j}}\|<\eta$ for all $j$.
Note that $$T-TR=\sum_{j=1}^{N}(x^{*}_{j}-R^{*}x^{*}_{j})\otimes u_{j}+\sum_{j=N+1}^{\infty}(x^{*}_{j}-R^{*}x^{*}_{j})\otimes u_{j}.$$
Hence
\begin{align*}
\widetilde{\nu}^{p}(T-TR)&\leq \widetilde{\nu}^{p}(\sum_{j=1}^{N}(\frac{x^{*}_{j}}{\xi_{j}}-R^{*}(\frac{x^{*}_{j}}{\xi_{j}}))\otimes \xi_{j}u_{j})+
\widetilde{\nu}^{p}(\sum_{j=N+1}^{\infty}(x^{*}_{j}-R^{*}x^{*}_{j})\otimes u_{j})\\
&\leq \|(\frac{x^{*}_{j}}{\xi_{j}}-R^{*}(\frac{x^{*}_{j}}{\xi_{j}}))_{j=1}^{N}\|_{p^{*}}^{w}\|(\xi_{j}u_{j})_{j=1}^{N}\|_{p}+
\|(x^{*}_{j}-R^{*}x^{*}_{j})_{j=N+1}^{\infty}\|_{p^{*}}^{w}\|(u_{j})_{j=N+1}^{\infty}\|_{p}\\
&\leq (\sum_{j=1}^{N}\|\frac{x^{*}_{j}}{\xi_{j}}-R^{*}(\frac{x^{*}_{j}}{\xi_{j}})\|^{p^{*}})^{\frac{1}{p^{*}}}(1+\delta)\|(u_{j})_{j=1}^{\infty}\|_{p}+
2\|(x^{*}_{j})_{j=1}^{\infty}\|_{p^{*}}^{w}\|(u_{j})_{j=N+1}^{\infty}\|_{p}\\
&\leq \eta N^{\frac{1}{p^{*}}}(1+\delta)^{2}\widetilde{\nu}^{p}(T)+2\delta\\
&\leq \delta (1+\delta)^{2}\widetilde{\nu}^{p}(T)+2\delta\\
&<\epsilon,
\end{align*}
which completes the proof.

\end{proof}

\begin{lemma}\label{3.9}
Suppose that $E$ has the $MAP$ and $X$ is a Banach lattice. Let $T\in \widetilde{\mathcal{N}}^{p}(X,E).$ Then, for every $\epsilon>0$, there exists an operator $S\in \mathcal{F}(E)$ with $\|S\|\leq 1$ such that $\widetilde{\nu}^{p}(T-ST)<\epsilon$.
\end{lemma}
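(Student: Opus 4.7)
The plan is to dualize Lemma~\ref{3.7}, swapping the roles of $X^{*}$ (through its PMAP) and $E$ (through its MAP): instead of approximating the positive functionals $x_{j}^{*}\in X^{*}$, I approximate the vectors $u_{j}\in E$ inside a fixed positively $p$-nuclear representation of $T$.

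Given $\epsilon>0$, I will choose $\delta>0$ with $3\delta<\epsilon$, and take a positively $p$-nuclear representation $T=\sum_{j=1}^{\infty}x_{j}^{*}\otimes u_{j}$ satisfying $\|(x_{j}^{*})_{j}\|_{p^{*}}^{w}\cdot\|(u_{j})_{j}\|_{p}\leq (1+\delta)\widetilde{\nu}^{p}(T)$, normalized so that $\|(x_{j}^{*})_{j}\|_{p^{*}}^{w}\leq 1$. Then pick a positive integer $N$ with $\bigl(\sum_{j>N}\|u_{j}\|^{p}\bigr)^{1/p}<\delta$, and $\eta>0$ with $N^{1/p}\eta<\delta$. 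Since $E$ has the MAP and $\{u_{1},\dots,u_{N}\}$ is a finite (hence compact) subset of $E$, there is an $S\in\mathcal{F}(E)$ with $\|S\|\leq 1$ such that $\|Su_{j}-u_{j}\|<\eta$ for $j=1,\dots,N$.

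Because every $x_{j}^{*}\geq 0$, the identity $T-ST=\sum_{j}x_{j}^{*}\otimes(u_{j}-Su_{j})$ is itself a positively $p$-nuclear representation. Splitting at index $N$ then yields
\begin{align*}
\widetilde{\nu}^{p}(T-ST)
&\leq \|(x_{j}^{*})_{j=1}^{N}\|_{p^{*}}^{w}\Bigl(\sum_{j=1}^{N}\|u_{j}-Su_{j}\|^{p}\Bigr)^{1/p}
+ \|(x_{j}^{*})_{j>N}\|_{p^{*}}^{w}\Bigl(\sum_{j>N}\|u_{j}-Su_{j}\|^{p}\Bigr)^{1/p}\\
&\leq N^{1/p}\eta + 2\Bigl(\sum_{j>N}\|u_{j}\|^{p}\Bigr)^{1/p}
< \delta+2\delta<\epsilon,
\end{align*}
where in the tail I used $\|u_{j}-Su_{j}\|\leq 2\|u_{j}\|$ since $\|S\|\leq 1$.

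I anticipate no serious obstacle; unlike Lemma~\ref{3.7}, no preliminary multiplier sequence $(\xi_{j})$ is required, because the MAP furnishes a uniform approximation on the finite (therefore compact) set $\{u_{1},\dots,u_{N}\}$ directly. The one point worth emphasizing is that replacing $u_{j}$ by $u_{j}-Su_{j}$ preserves the positivity of the $x_{j}^{*}$, so the basic bound $\widetilde{\nu}^{p}\bigl(\sum_{j}x_{j}^{*}\otimes v_{j}\bigr)\leq \|(x_{j}^{*})_{j}\|_{p^{*}}^{w}\|(v_{j})_{j}\|_{p}$ applies directly, without recourse to the factor-two detour through $|\widetilde{\nu}^{p}|$ of Proposition~\ref{1.1}(c).
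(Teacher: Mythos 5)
Your proof is correct. The only real difference from the paper is in how you handle the fact that there are infinitely many $u_{j}$: the paper avoids any truncation by first choosing a multiplier sequence $1\leq \xi_{j}\to\infty$ with $\|(\xi_{j}u_{j})_{j}\|_{p}\leq(1+\delta)\|(u_{j})_{j}\|_{p}$, so that the normalized vectors $u_{j}/(\xi_{j}\|u_{j}\|)$ form a norm-null (hence relatively compact) set on which one single MAP operator $S$ works uniformly; this gives the clean estimate $\widetilde{\nu}^{p}(T-ST)\leq\|(x_{j}^{*})_{j}\|_{p^{*}}^{w}\,\delta(1+\delta)\|(u_{j})_{j}\|_{p}\leq\delta(1+\delta)^{2}\widetilde{\nu}^{p}(T)$ in one stroke. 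You instead truncate at $N$, use the MAP only on the finite set $\{u_{1},\dots,u_{N}\}$, and pay a harmless factor $2$ on the tail via $\|u_{j}-Su_{j}\|\leq 2\|u_{j}\|$ — which is exactly the bookkeeping the paper itself uses in Lemma \ref{3.7} and in Case 2 of Lemma \ref{1.2}, transplanted to the $E$-side. Both arguments rest on the same core observation you emphasize, namely that replacing $u_{j}$ by $u_{j}-Su_{j}$ keeps the positivity of the $x_{j}^{*}$ and hence yields a legitimate positively $p$-nuclear representation of $T-ST$; your version is marginally more elementary (no $\xi_{j}$ trick, no appeal to compactness of a null sequence), while the paper's version gives a bound proportional to $\widetilde{\nu}^{p}(T)$ without splitting the series, but for the stated lemma either is entirely adequate.
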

\begin{proof}
Let $\epsilon>0$. Let $\delta>0$ be such that $\delta(1+\delta)^{2}\widetilde{\nu}^{p}(T)<\epsilon.$ We choose a positively $p$-nuclear representation $T=\sum\limits_{j=1}^{\infty}x^{*}_{j}\otimes u_{j}$ such that $$\|(x^{*}_{j})_{j=1}^{\infty}\|_{p^{*}}^{w}\|(u_{j})_{j=1}^{\infty}\|_{p}\leq (1+\delta)\widetilde{\nu}^{p}(T).$$
Choose $1\leq \xi_{j}\rightarrow \infty$ such that
$$\|(\xi_{j}u_{j})_{j=1}^{\infty}\|_{p}\leq (1+\delta)\|(u_{j})_{j=1}^{\infty}\|_{p}.$$
Since $E$ has the $MAP$, there exists an operator $S\in \mathcal{F}(E)$ with $\|S\|\leq 1$ such that
$$\|S(\frac{u_{j}}{\xi_{j}\|u_{j}\|})-\frac{u_{j}}{\xi_{j}\|u_{j}\|}\|<\delta, \quad j=1,2,\cdots.$$
Hence
\begin{align*}
\widetilde{\nu}^{p}(T-ST)&\leq \|(x^{*}_{j})_{j=1}^{\infty}\|_{p^{*}}^{w}\|(u_{j}-Su_{j})_{j=1}^{\infty}\|_{p}\\
&\leq \|(x^{*}_{j})_{j=1}^{\infty}\|_{p^{*}}^{w}\delta(1+\delta)\|(u_{j})_{j=1}^{\infty}\|_{p}\\
&\leq \delta(1+\delta)^{2}\widetilde{\nu}^{p}(T)\\
&<\epsilon.
\end{align*}
This finishes the proof.

\end{proof}

\begin{theorem}\label{3.4}
Suppose that $X^{*}$ has the $PMAP$ and $E$ has the $MAP$. Then
\begin{center}
$\widetilde{\nu}^{p}(T)=\widetilde{i}^{p}(T)$ for all $T\in \widetilde{\mathcal{N}}^{p}(X,E).$
\end{center}
\end{theorem}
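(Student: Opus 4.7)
The plan is to prove the nontrivial inequality $\widetilde{\nu}^{p}(T)\leq \widetilde{i}^{p}(T)$ for every $T\in \widetilde{\mathcal{N}}^{p}(X,E)$, since the reverse inequality $\widetilde{i}^{p}(T)\leq \widetilde{\nu}^{p}(T)$ was already noted right after Definition \ref{1.31}.

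Fix $T\in \widetilde{\mathcal{N}}^{p}(X,E)$ and let $\epsilon>0$. Since $E$ has the $MAP$, Lemma \ref{3.9} provides a finite rank operator $S\in \mathcal{F}(E)$ with $\|S\|\leq 1$ such that $\widetilde{\nu}^{p}(T-ST)<\epsilon$. Set $L:=\mathrm{Ker}(S)$, which belongs to $COFIN(E)$, and factor $S=\widehat{S}Q_{L}$ via the canonical one-to-one operator $\widehat{S}:E/L\to E$ satisfying $\|\widehat{S}\|=\|S\|\leq 1$. Because $X^{*}$ has the $PMAP$, Theorem \ref{1.30} applies to give $\widetilde{\nu}^{p}(Q_{L}T)\leq \widetilde{i}^{p}(T)$. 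Combining this with the left ideal property of positively $p$-nuclear operators (Proposition \ref{1.1}(a)) produces
\[
\widetilde{\nu}^{p}(ST) \;=\; \widetilde{\nu}^{p}(\widehat{S}Q_{L}T) \;\leq\; \|\widehat{S}\|\,\widetilde{\nu}^{p}(Q_{L}T) \;\leq\; \widetilde{i}^{p}(T).
\]

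A triangle-inequality estimate then yields
\[
\widetilde{\nu}^{p}(T)\leq \widetilde{\nu}^{p}(T-ST)+\widetilde{\nu}^{p}(ST)<\epsilon+\widetilde{i}^{p}(T),
\]
and letting $\epsilon\to 0$ closes the argument. I do not anticipate any serious obstacle: the whole proof is essentially a bookkeeping combination of the right-side approximation from Lemma \ref{3.9} (supplied by the $MAP$ of $E$) with the quotient-map characterization of $\widetilde{i}^{p}$ in Theorem \ref{1.30} (supplied by the $PMAP$ of $X^{*}$). The only mild subtlety is the factorization $S=\widehat{S}Q_{L}$, which is what allows the bound on $\widetilde{\nu}^{p}(Q_{L}T)$ to be transported to a bound on $\widetilde{\nu}^{p}(ST)$ via the left ideal property. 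Note that the companion approximation Lemma \ref{3.7} is not strictly required here, although it would yield a symmetric alternative route passing through an approximation $TR$ on the source side together with the dual characterization of $\widetilde{i}^{p}$ via finite-dimensional sublattices or finite-rank positive operators.
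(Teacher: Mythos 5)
Your proof is correct, but it takes a different route from the paper's. The paper approximates $T$ on \emph{both} sides: Lemma \ref{3.7} (using the $PMAP$ of $X^{*}$) gives a positive finite-rank $R$ on $X$ with $\widetilde{\nu}^{p}(T-TR)<\epsilon$, Lemma \ref{3.9} then gives $S\in\mathcal{F}(E)$ with $\widetilde{\nu}^{p}(TR-STR)<\epsilon$, and the key bound $\widetilde{\nu}^{p}(STR)\leq\|S\|\,\widetilde{i}^{p}(T)\,\|R\|$ comes straight from Definition \ref{1.31}, since $S$ and $R$ are exactly the kind of finite-rank data that definition quantifies over. You instead approximate only on the range side ($T\approx ST$ via Lemma \ref{3.9}), and transfer the integral bound to $\widetilde{\nu}^{p}(ST)$ by factoring $S=\widehat{S}Q_{L}$ through $E/\mathrm{Ker}(S)$ and invoking Theorem \ref{1.30}, which is legitimate because $T$ is positively $p$-integral with $\widetilde{i}^{p}(T)\leq\widetilde{\nu}^{p}(T)<\infty$ and the set of admissible constants in Definition \ref{1.31} is closed, so $C=\widetilde{i}^{p}(T)$ itself works; the left positive ideal property (Proposition \ref{1.1}(a), with the identity of $X$ as the positive factor) then gives $\widetilde{\nu}^{p}(ST)\leq\|\widehat{S}\|\,\widetilde{\nu}^{p}(Q_{L}T)\leq\widetilde{i}^{p}(T)$. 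Your route is shorter and bypasses Lemma \ref{3.7} entirely, but it is not more economical in hypotheses: the $PMAP$ of $X^{*}$ is still consumed, only now hidden inside the proof of Theorem \ref{1.30} (whose implication (i)$\Rightarrow$(ii) rests on Lemma \ref{3.8}); the paper's two-sided approximation is more self-contained, using only the raw definition of $\widetilde{i}^{p}$ plus the two approximation lemmas, and it keeps the roles of the two approximation hypotheses symmetric and explicit.
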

\begin{proof}
Let $T\in \widetilde{\mathcal{N}}^{p}(X,E).$ It suffices to show that $\widetilde{\nu}^{p}(T)\leq\widetilde{i}^{p}(T)$.

Let $\epsilon>0$. By Lemma \ref{3.7}, there exists an operator $R\in \mathcal{F}_{+}(X)$ with $\|R\|\leq 1$ such that $\widetilde{\nu}^{p}(T-TR)<\epsilon$. Applying Lemma \ref{3.9} to $TR$, there exists an operator $S\in \mathcal{F}(E)$ with $\|S\|\leq 1$ such that $\widetilde{\nu}^{p}(TR-STR)<\epsilon$. Thus, we get
\begin{align*}
\widetilde{\nu}^{p}(T)&\leq \widetilde{\nu}^{p}(T-TR)+\widetilde{\nu}^{p}(TR-STR)+\widetilde{\nu}^{p}(STR)\\
&\leq 2\epsilon+\widetilde{\nu}^{p}(STR)\\
&\leq 2\epsilon+\|S\|\widetilde{i}^{p}(T)\|R\|\\
&\leq 2\epsilon+\widetilde{i}^{p}(T).\\
\end{align*}
Letting $\epsilon\rightarrow 0$, we get $$\widetilde{\nu}^{p}(T)\leq\widetilde{i}^{p}(T),$$
which completes the proof.

\end{proof}

To describe the space of positively $p$-integral operators, we set $$\Upsilon_{p}^{0}(E,X):=\overline{\mathcal{F}(E,X)}^{\|\cdot\|_{\Upsilon_{p}}}.$$

\begin{lemma}\label{2.6}
Suppose that $E^{**}$ has the $MAP$ and $X^{*}$ has the $PMAP$. Let $S\in \Upsilon_{p}^{0}(E,X)$ and let $T\in \widetilde{\mathcal{I}}^{p^{*}}(X,E^{**})$. Then $TS$ is nuclear and $\nu(TS)\leq \widetilde{i}^{p^{*}}(T)\|S\|_{\Upsilon_{p}}.$
\end{lemma}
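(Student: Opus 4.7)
The plan is to establish an auxiliary sub-claim that $UT$ is positively $p^{*}$-nuclear for every $U\in\mathcal{F}(E^{**})$ with $\widetilde{\nu}^{p^{*}}(UT)\leq\|U\|\widetilde{i}^{p^{*}}(T)$, combine it with Lemma \ref{2.1} and the $MAP$ of $E^{**}$ to handle $S\in\mathcal{F}(E,X)$, and finally extend by density to $\Upsilon_{p}^{0}(E,X)$.

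For the sub-claim, since $U$ has finite rank, $L:=\ker U$ lies in $COFIN(E^{**})$ and $U$ factors as $U=\widehat{U}\circ Q_{L}$, where the induced injection $\widehat{U}\colon E^{**}/L\to E^{**}$ satisfies $\|\widehat{U}\|\leq\|U\|$. By Theorem \ref{1.30} (whose hypothesis is precisely the $PMAP$ of $X^{*}$ assumed here), the operator $Q_{L}T$ belongs to $\widetilde{\mathcal{N}}^{p^{*}}(X,E^{**}/L)$ with $\widetilde{\nu}^{p^{*}}(Q_{L}T)\leq\widetilde{i}^{p^{*}}(T)$. The left ideal property of $\widetilde{\mathcal{N}}^{p^{*}}$ (Proposition \ref{1.1}(a)) applied to $UT=\widehat{U}\cdot(Q_{L}T)$ then yields $\widetilde{\nu}^{p^{*}}(UT)\leq\|\widehat{U}\|\widetilde{\nu}^{p^{*}}(Q_{L}T)\leq\|U\|\widetilde{i}^{p^{*}}(T)$.

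For the finite-rank case, fix $S=\sum_{i=1}^{n}u_{i}^{*}\otimes x_{i}\in\mathcal{F}(E,X)$ and $\varepsilon>0$. The $MAP$ of $E^{**}$ furnishes $U\in\mathcal{F}(E^{**})$ with $\|U\|\leq 1$ and $\|Tx_{i}-UTx_{i}\|<\varepsilon/(1+\sum_{j}\|u_{j}^{*}\|)$ for each $i$. Applying Lemma \ref{2.1} (with the substitution $p\mapsto p^{*}$) to the positively $p^{*}$-nuclear operator $UT$ and the positive $p$-majorizing operator $S$ yields
$$\nu(UTS)\leq\widetilde{\nu}^{p^{*}}(UT)\|S\|_{\Upsilon_{p}}\leq\widetilde{i}^{p^{*}}(T)\|S\|_{\Upsilon_{p}},$$
while the identity $TS-UTS=\sum_{i}u_{i}^{*}\otimes(I-U)Tx_{i}$ gives $\nu(TS-UTS)\leq\sum_{i}\|u_{i}^{*}\|\|(I-U)Tx_{i}\|<\varepsilon$. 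The triangle inequality and $\varepsilon\to 0$ produce $\nu(TS)\leq\widetilde{i}^{p^{*}}(T)\|S\|_{\Upsilon_{p}}$.

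To extend to $S\in\Upsilon_{p}^{0}(E,X)$, choose $S_{k}\in\mathcal{F}(E,X)$ with $\|S_{k}-S\|_{\Upsilon_{p}}\to 0$. The finite-rank bound applied to the differences $S_{k}-S_{m}\in\mathcal{F}(E,X)$ shows that $(TS_{k})$ is Cauchy in nuclear norm, hence converges to some $\widetilde{U}\in\mathcal{N}(E,E^{**})$. The elementary estimate $\|S'\|\leq 2\|S'\|_{\Upsilon_{p}}$ (obtained by decomposing any $x^{*}\in B_{X^{*}}$ as $x^{*}_{+}-x^{*}_{-}$ with $\|x^{*}_{\pm}\|\leq\|x^{*}\|$ in the Banach lattice $X^{*}$) implies $\|S_{k}-S\|\to 0$ in operator norm, so $TS_{k}\to TS$ in operator norm, forcing $\widetilde{U}=TS$; the bound then passes to the limit.

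I expect the sub-claim to be the main non-trivial step, as this is where the $PMAP$ of $X^{*}$ enters essentially through Theorem \ref{1.30}; the $MAP$ of $E^{**}$ is then used in the finite-rank step purely to control the approximation error $\nu(TS-UTS)$ by approximating the identity on the finite set $\{Tx_{i}\}\subset E^{**}$.
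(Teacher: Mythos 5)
Your proof is correct, but it takes a genuinely different route from the paper in the finite-rank step. The paper proves Case 1 by trace duality: it invokes Pietsch's formula $\nu(TS)=\sup\{|\mathrm{trace}(RTS)|:R\in\mathcal{L}(E^{**}),\,\|R\|\leq 1\}$ (which needs the $MAP$ of $E^{*}$, inherited from $E^{**}$), reduces to finite-rank $R$ by the $MAP$ of $E^{**}$, and then bounds $|\mathrm{trace}(RTS)|\leq\widetilde{\nu}^{p^{*}}(RT)\|S\|_{\Upsilon_{p}}$ via Lemma \ref{2.1} and identifies $\widetilde{\nu}^{p^{*}}(RT)=\widetilde{i}^{p^{*}}(RT)\leq\|R\|\widetilde{i}^{p^{*}}(T)$ via Theorem \ref{3.4}. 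You instead prove the key inequality $\widetilde{\nu}^{p^{*}}(UT)\leq\|U\|\,\widetilde{i}^{p^{*}}(T)$ for finite-rank $U$ directly from Theorem \ref{1.30} through the factorization $U=\widehat{U}Q_{\ker U}$ with $\ker U\in COFIN(E^{**})$, and then use the $MAP$ of $E^{**}$ only to make $\nu(TS-UTS)$ small on the finite set $\{Tx_{i}\}$; this bypasses both the trace-duality formula and Theorem \ref{3.4}, so it is more self-contained (essentially your sub-claim is the special case of Theorem \ref{3.4} that is actually needed, obtained with less machinery). In the passage to $\Upsilon_{p}^{0}(E,X)$ the paper uses the absolutely convergent series representation of elements of the closure, while you use a Cauchy-sequence argument together with the explicit comparison $\|\cdot\|\leq 2\|\cdot\|_{\Upsilon_{p}}$ obtained from the decomposition $x^{*}=x^{*}_{+}-x^{*}_{-}$; these are equivalent, and your justification of operator-norm convergence is a detail the paper only asserts. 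Both routes use exactly the stated hypotheses and give the same constant.
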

\begin{proof}
Case 1. $S$ is finite-rank.

Since $E^{**}$ has the $MAP$, $E^{*}$ also has the $MAP$.
By \cite[Proposition 10.3.1]{P}, we get $$\nu(TS)=\sup\{|\textrm{trace}(RTS)|:R\in \mathcal{L}(E^{**}),\|R\|\leq 1\}.$$

Since $E^{**}$ has the $MAP$, we get $$\sup\{|\textrm{trace}(RTS)|:R\in \mathcal{L}(E^{**}),\|R\|\leq 1\}=\sup\{|\textrm{trace}(RTS)|:R\in \mathcal{F}(E^{**}),\|R\|\leq 1\}.$$

Theorem \ref{2.1} and Theorem \ref{3.4} yield
\begin{align*}
\sup\{|\textrm{trace}(RTS)|:R\in \mathcal{F}(E^{**}),\|R\|\leq 1\}&\leq \sup\{\widetilde{\nu}^{p^{*}}(RT)\|S\|_{\Upsilon_{p}}:R\in \mathcal{F}(E^{**}),\|R\|\leq 1\}\\
&=\sup\{\widetilde{i}^{p^{*}}(RT)\|S\|_{\Upsilon_{p}}:R\in \mathcal{F}(E^{**}),\|R\|\leq 1\}\\
&\leq\sup\{\|R\|\widetilde{i}^{p^{*}}(T)\|S\|_{\Upsilon_{p}}:R\in \mathcal{F}(E^{**}),\|R\|\leq 1\}\\
&\leq \widetilde{i}^{p^{*}}(T)\|S\|_{\Upsilon_{p}}.
\end{align*}
Hence, we get $$\nu(TS)\leq \widetilde{i}^{p^{*}}(T)\|S\|_{\Upsilon_{p}}.$$

Case 2. $S\in \Upsilon_{p}^{0}(E,X)$.

Let $\epsilon>0$. Then there exists a sequence $(S_{n})_{n}$ in $\mathcal{F}(E,X)$ such that
\begin{center}
$\sum\limits_{n}S_{n}=S$ in $\|\cdot\|_{\Upsilon_{p}}$ and $\sum\limits_{n}\|S_{n}\|_{\Upsilon_{p}}\leq (1+\epsilon)\|S\|_{\Upsilon_{p}}.$
\end{center}
By Case 1,
\begin{center}
$\nu(TS_{n})\leq \widetilde{i}^{p^{*}}(T)\|S_{n}\|_{\Upsilon_{p}}$ for all $n$.
\end{center}
This implies
$$\sum_{n}\nu(TS_{n})\leq (1+\epsilon)\widetilde{i}^{p^{*}}(T)\|S\|_{\Upsilon_{p}}.$$
Hence
\begin{center}
$\sum\limits_{n}TS_{n}=U$ in $\nu$ for some $U\in \mathcal{N}(E,E^{**})$.
\end{center}
and so
\begin{center}
$\sum\limits_{n}TS_{n}=U$ in operator norm $\|\cdot\|$.
\end{center}
Note that
\begin{center}
$\sum\limits_{n}S_{n}=S$ in operator norm $\|\cdot\|$.
\end{center}
Therefore, we get $TS=U\in\mathcal{N}(E,E^{**})$. Moreover, $$\nu(TS)=\nu(U)\leq(1+\epsilon)\widetilde{i}^{p^{*}}(T)\|S\|_{\Upsilon_{p}}.$$
Letting $\epsilon\rightarrow 0$, we get $$\nu(TS)\leq\widetilde{i}^{p^{*}}(T)\|S\|_{\Upsilon_{p}}.$$
\end{proof}

\begin{lemma}\label{2.8}
Let $T\in \mathcal{F}(X,E)$.
\item[(a)]If $E$ has the $MAP$ or $X^{*}$ has the $PMAP$, then
$$\widetilde{\nu}^{p}(T)=\sup\{|\textrm{trace}(RT)|:R\in \mathcal{F}(E,X),\|R\|_{\Upsilon_{p^{*}}}\leq 1\}.$$
\item[(b)]If $E=F^{**}$ has the $MAP$, then
$$\widetilde{\nu}^{p}(T)=\sup\{|\textrm{trace}(TS)|:S\in \mathcal{F}(F,X),\|S\|_{\Upsilon_{p^{*}}}\leq 1\}.$$
\end{lemma}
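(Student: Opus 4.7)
The plan is to derive both equalities from Theorem \ref{1.13} together with an approximation argument tailored to each hypothesis. For the upper bound in part (a), given $R\in \mathcal{F}(E,X)$ with $\|R\|_{\Upsilon_{p^{*}}}\leq 1$, I would first verify that $J_{X}R\in \mathcal{F}(E,X^{**})$ satisfies $\|J_{X}R\|_{\Upsilon_{p^{*}}}=\|R\|_{\Upsilon_{p^{*}}}$ by a direct check using that $J_{X}$ and $J_{X^{*}}$ are positive isometric embeddings and that the latter preserves weak $\ell_{p^{*}}$-norms. Theorem \ref{1.13} then yields $|\textrm{trace}(RT)|=|\textrm{trace}(J_{X}R\cdot T)|\leq \widetilde{\nu}^{p}(T)$; the analogous upper bound in part (b) follows by the same observation applied to $S^{**}\colon F^{**}\to X^{**}$.

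For the lower bound in part (a), Theorem \ref{1.13} supplies, for each $\epsilon>0$, some $S\in \Upsilon_{p^{*}}(E,X^{**})$ with $\|S\|_{\Upsilon_{p^{*}}}\leq 1$ and $\textrm{trace}(ST)\geq \widetilde{\nu}^{p}(T)-\epsilon$. Writing $T=\sum_{j=1}^{n} x^{*}_{j}\otimes u_{j}$, I split on the hypothesis. If $X^{*}$ has the $PMAP$, I would invoke Lemma \ref{3.8} to obtain $A\in \mathcal{F}_{+}(X)$ with $\|A\|\leq 1$ and $A^{*}x^{*}_{j}$ arbitrarily close to $x^{*}_{j}$; since $A$ is finite-rank, $A^{**}$ maps $X^{**}$ into $J_{X}(X)$, so $J_{X}R:=A^{**}S$ defines a unique $R\in \mathcal{F}(E,X)$. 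The left positive ideal property of $\Upsilon_{p^{*}}$ (with $A^{**}$ positive) then gives $\|R\|_{\Upsilon_{p^{*}}}\leq 1$, while $\textrm{trace}(RT)=\sum_{j}\langle Su_{j},A^{*}x^{*}_{j}\rangle$ is within a small error of $\textrm{trace}(ST)$. If instead $E$ has the $MAP$, I would find $U\in \mathcal{F}(E)$ with $\|U\|\leq 1$ and $Uu_{j}$ close to $u_{j}$, apply Lemma \ref{1.4} to the finite-dimensional range of $SU$ in the order complete lattice $X^{**}$ to produce a finite-dimensional sublattice $G$ and a positive projection $P$ nearly the identity on this range, and then apply Theorem \ref{1.3} to obtain a lattice isomorphism $V\colon G\to X$ with $\|V\|\leq 1+\delta$ and near-identity duality pairings with the $x^{*}_{j}$. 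Setting $R=VPSU$, the composition $VP$ is positive, so the left positive ideal property bounds $\|R\|_{\Upsilon_{p^{*}}}$ by $(1+O(\delta))\|S\|_{\Upsilon_{p^{*}}}$, while $\textrm{trace}(RT)$ tracks $\textrm{trace}(ST)$ up to the combined approximation errors.

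For part (b) I would run a symmetric argument. Since $\textrm{trace}(TS)=\sum_{j}\langle S^{**}u_{j},x^{*}_{j}\rangle$ for $T=\sum x^{*}_{j}\otimes u_{j}$ with $u_{j}\in F^{**}$, Theorem \ref{1.13} (with $E=F^{**}$) produces an extremizer $S'\in \Upsilon_{p^{*}}(F^{**},X^{**})$. Using the $MAP$ of $F^{**}$ together with the classical principle of local reflexivity for $F\hookrightarrow F^{**}$, I would lift $S'$ to a finite-rank $S\colon F\to X$ whose bidual agrees with $S'$ on the $u_{j}$ in the pairings against the $x^{*}_{j}$, adjusting the codomain from $X^{**}$ to $X$ via Lemma \ref{1.4} and Theorem \ref{1.3} exactly as in Case 1 of part (a). The main obstacle in all cases is controlling the $\|\cdot\|_{\Upsilon_{p^{*}}}$-norm of the constructed finite-rank approximation: Lemma \ref{1.4} produces a positive projection that is only near-identity on a prescribed finite-dimensional subspace, so its global norm is not a priori $1+O(\delta)$; one must therefore apply the left positive ideal property to the composition $PSU$, whose image lies in the controlled sublattice, rather than to $P$ alone, and carefully track the error terms when passing from trace pairings in $X^{**}$ to trace pairings in $X$.
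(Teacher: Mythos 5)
Your proposal follows essentially the same route as the paper: the easy inequality via $J_{X}R$ (resp.\ $S^{**}$ and Theorem \ref{1.6}), and the reverse inequality by taking a near-optimal $S\in\Upsilon_{p^{*}}(E,X^{**})$ from Theorem \ref{1.13} and pushing it down to a finite-rank operator into $X$ using Lemma \ref{3.8} in the $PMAP$ case and the $MAP$ plus Lemma \ref{1.4} and Theorem \ref{1.3} in the other case, with part (b) handled by the classical principle of local reflexivity (the paper reduces (b) to (a) by moving the functionals of a finite-rank $E$-approximant from $F^{***}$ into $F^{*}$, whereas you lift the extremizer directly, but the ingredients and estimates are identical). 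One remark on the obstacle you flag at the end: the bound on $\|P\|$ is genuinely needed and is not avoided by applying the left positive ideal property to $PSU$ rather than to $P$ alone, since any such estimate still factors through the outer positive map $P$; the point is that the Lissitsin--Oja lemma behind Lemma \ref{1.4} actually provides a contractive positive projection, which is what the paper (implicitly) uses in bounding $\|BPSA\|_{\Upsilon_{p^{*}}}$ by $\|B\|\,\|P\|\,\|S\|_{\Upsilon_{p^{*}}}\|A\|$.
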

\begin{proof}
(a). By Theorem \ref{1.13}, we get $$\widetilde{\nu}^{p}(T)=\sup\{|\textrm{trace}(ST)|:S\in \Upsilon_{p^{*}}(E,X^{**}),\|S\|_{\Upsilon_{p^{*}}}\leq 1\}.$$
We set $$c_{T}:=\sup\{|\textrm{trace}(RT)|:R\in \mathcal{F}(E,X),\|R\|_{\Upsilon_{p^{*}}}\leq 1\}.$$
Clearly, $c_{T}\leq \widetilde{\nu}^{p}(T).$
It remains to prove the reverse. Let $S\in \Upsilon_{p^{*}}(E,X^{**}),\|S\|_{\Upsilon_{p^{*}}}\leq 1$.

Case 1. $E$ has the $MAP$.

Let $\epsilon>0$. Then there exists an operator $A\in \mathcal{F}(E),\|A\|\leq 1+\epsilon$ such that $AT=T$. We let $R=SA$ and write
$$R=\sum_{i=1}^{n}u^{*}_{i}\otimes x^{**}_{i}, \quad u^{*}_{i}\in E^{*},x^{**}_{i}\in X^{**},i=1,2,\cdots,n.$$
and $$T=\sum_{j=1}^{m}x^{*}_{j}\otimes u_{j}, \quad x^{*}_{j}\in X^{*}, u_{j}\in E, j=1,2,\cdots,m.$$
We choose $\delta>0$ such that
\begin{center}
$\delta(2+\delta)\sum_{j=1}^{m}\sum_{i=1}^{n}\|u^{*}_{i}\|\|u_{j}\|\|x^{**}_{i}\|\|x^{*}_{j}\|<\epsilon$ and $(1+\delta)^{2}\leq 1+\epsilon.$
\end{center}
We set $M=\textrm{span}\{x^{**}_{i}:1\leq i\leq n\}$ and $L=\textrm{span}\{x^{*}_{j}:1\leq i\leq m\}.$ It follows from Lemma \ref{1.4} that there exist a sublattice $Z$ of $X^{**}$ containing $M$, a finite-dimensional sublattice $G$ of $Z$ and a positive projection $P$ from $Z$ onto $G$ such that $\|Px^{**}-x^{**}\|\leq \delta\|x^{**}\|$ for all $x^{**}\in M$. By Theorem \ref{1.3}, there exists a lattice isomorphism $B$ from $G$ into $X$ such that $\|B\|,\|B^{-1}\|\leq 1+\delta$ and
$$|\langle x^{**},x^{*}\rangle-\langle x^{*},Bx^{**}\rangle|\leq \delta \|x^{**}\|\|x^{*}\|, \quad x^{**}\in G, x^{*}\in L.$$
Let $\widetilde{R}=BPR\in \mathcal{F}(E,X)$ and then
\begin{align*}
\|\widetilde{R}\|_{\Upsilon_{p^{*}}}&=\|BPSA\|_{\Upsilon_{p^{*}}}\\
&\leq \|BP\|\|S\|_{\Upsilon_{p^{*}}}\|A\|\\
&\leq \|B\|\|P\|\|A\|\\
&\leq (1+\epsilon)^{2}\\
\end{align*}
Note that for all $i,j$, we have
\begin{align*}
|\langle x^{**}_{i},x^{*}_{j}\rangle-\langle x^{*}_{j},BPx^{**}_{i}\rangle|&\leq |\langle x^{**}_{i},x^{*}_{j}\rangle-\langle Px^{**}_{i},x^{*}_{j}\rangle|+
|\langle Px^{**}_{i},x^{*}_{j}\rangle-\langle x^{*}_{j},BPx^{**}_{i}\rangle|\\
&\leq \delta \|x^{**}_{i}\|\|x^{*}_{j}\|+\delta \|Px^{**}_{i}\|\|x^{*}_{j}\|\\
&\leq \delta \|x^{**}_{i}\|\|x^{*}_{j}\|+\delta(1+\delta)\|x^{**}_{i}\|\|x^{*}_{j}\|\\
&=\delta(2+\delta)\|\|x^{**}_{i}\|\|x^{*}_{j}\|\\
\end{align*}
This implies
\begin{align*}
|\textrm{trace}(ST)-\textrm{trace}(\widetilde{R}T)|&=|\textrm{trace}(RT)-\textrm{trace}(\widetilde{R}T)|\\
&=|\sum_{j=1}^{m}\sum_{i=1}^{n}\langle u^{*}_{i},u_{j}\rangle(\langle x^{**}_{i},x^{*}_{j}\rangle-\langle x^{*}_{j},BPx^{**}_{i}\rangle)|\\
&\leq \delta(2+\delta)\sum_{j=1}^{m}\sum_{i=1}^{n}\|u^{*}_{i}\|\|u_{j}\|\|x^{**}_{i}\|\|x^{*}_{j}\|\\
&<\epsilon.
\end{align*}
This yields
$$|\textrm{trace}(ST)|\leq \epsilon+|\textrm{trace}(\widetilde{R}T)|
\leq \epsilon+(1+\epsilon)^{2}c_{T}.$$

Hence
$$\widetilde{\nu}^{p}(T)\leq \epsilon+(1+\epsilon)^{2}c_{T}.$$
Letting $\epsilon\rightarrow 0$, we get
$\widetilde{\nu}^{p}(T)\leq c_{T}.$

Case 2. $X^{*}$ has the $PMAP$.

Let $\epsilon>0$. We write $T=\sum\limits_{i=1}^{n}x^{*}_{i}\otimes u_{i}(x^{*}_{i}\in X^{*},u_{i}\in E, i=1,2,\cdots,n).$ Choose $\delta>0$ with $\delta\sum\limits_{i=1}^{n}\|Su_{i}\|<\epsilon.$ Since $X^{*}$ has the $PAMP$, it follows from Lemma \ref{3.8} that there exists an operator $B\in \mathcal{F}_{+}(X), \|B\|\leq 1$ such that $\|B^{*}x^{*}_{i}-x^{*}_{i}\|<\delta$ for all $i=1,2,\cdots,n$. We set $R=B^{**}S\in \mathcal{F}(E,X)$. Then $
\|R\|_{\Upsilon_{p^{*}}}\leq 1$ and
\begin{align*}
|\textrm{trace}(ST)-\textrm{trace}(RT)|&=|\sum_{i=1}^{n}\langle Su_{i},x^{*}_{i}\rangle-\sum_{i=1}^{n}\langle B^{*}x^{*}_{i},Su_{i}\rangle|\\
&\leq \sum_{i=1}^{n}\|Su_{i}\|\|B^{*}x^{*}_{i}-x^{*}_{i}\|\\
&\leq \delta\sum\limits_{i=1}^{n}\|Su_{i}\|<\epsilon.
\end{align*}
Hence
$$\widetilde{\nu}^{p}(T)\leq \epsilon+c_{T}.$$
Letting $\epsilon\rightarrow 0$, we get
$\widetilde{\nu}^{p}(T)\leq c_{T}.$
This completes the proof of (a).

(b). By (a), it suffices to prove
$$\sup\{|\textrm{trace}(RT)|:R\in \mathcal{F}(E,X),\|R\|_{\Upsilon_{p^{*}}}\leq 1\}=\sup\{|\textrm{trace}(TS)|:S\in \mathcal{F}(F,X),\|S\|_{\Upsilon_{p^{*}}}\leq 1\}.$$
For the sake of convenience, we set $$\alpha:=\sup\{|\textrm{trace}(RT)|:R\in \mathcal{F}(E,X),\|R\|_{\Upsilon_{p^{*}}}\leq 1\}$$ and
$$\beta:=\sup\{|\textrm{trace}(TS)|:S\in \mathcal{F}(F,X),\|S\|_{\Upsilon_{p^{*}}}\leq 1\}.$$
Let $S\in \mathcal{F}(F,X)$ with $\|S\|_{\Upsilon_{p^{*}}}\leq 1$. By Theorem \ref{1.6}, $\|S^{**}\|_{\Upsilon_{p^{*}}}=\|S\|_{\Upsilon_{p^{*}}}\leq 1.$ It is easy to check that $\textrm{trace}(TS)=\textrm{trace}(S^{**}T)$. Hence, we get $\beta\leq \alpha.$

Conversely, let $R\in \mathcal{F}(E,X)$ with $\|R\|_{\Upsilon_{p^{*}}}\leq 1$. Let $\epsilon>0$. We write $T=\sum\limits_{i=1}^{n}x^{*}_{i}\otimes u_{i}$, where $x^{*}_{i}\in X^{*},u_{i}\in E(i=1,2,\cdots,n)$. Choose $\delta>0$ such that $\delta \|R\|\sum\limits_{i=1}^{n}\|x^{*}_{i}\|<\epsilon.$ Since $E$ has the $MAP$, there exists an operator $A\in \mathcal{F}(E)$ with $\|A\|\leq 1$ such that $\|Au_{i}-u_{i}\|<\delta$ for all $i=1,2,\cdots,n.$ Hence we get
\begin{align}\label{16}
|\textrm{trace}(RT)-\textrm{trace}(RAT)|&=|\sum\limits_{i=1}^{n}\langle x^{*}_{i},Ru_{i}\rangle-\sum\limits_{i=1}^{n}\langle x^{*}_{i},RAu_{i}\rangle| \nonumber \\
&\leq \sum\limits_{i=1}^{n}\|x^{*}_{i}\|\|R\|\|Au_{i}-u_{i}\| \nonumber \\
&<\epsilon.
\end{align}
We also write $A=\sum\limits_{j=1}^{m}u^{*}_{j}\otimes w_{j}, u^{*}_{j}\in E^{*}, w_{j}\in E(j=1,2,\cdots,m).$ We set $M=\textrm{span}\{u^{*}_{j}:1\leq j\leq m\}$ and $L=\textrm{span}\{u_{i}:1\leq i\leq n\}$. It follows from the principle of local reflexivity in Banach spaces that there exists an operator $C:M\rightarrow F^{*}$ such that

(i) $C|_{M\cap F^{*}}=I_{M\cap F^{*}}$;

(ii) $(1-\epsilon)\|u^{*}\|\leq \|Cu^{*}\|\leq (1+\epsilon)\|u^{*}\|, \quad u^{*}\in M$;

(iii) $\langle u^{*},u\rangle=\langle u, Cu^{*}\rangle, \quad u^{*}\in M, u\in L.$

We set $B=\sum\limits_{j=1}^{m}Cu^{*}_{j}\otimes w_{j}$ and $S=RB$. Clearly, $CA^{*}=B^{*}$. By (ii), we get
$$\|S\|_{\Upsilon_{p^{*}}}\leq \|R\|_{\Upsilon_{p^{*}}}\|B\|\leq \|B^{*}\|\leq 1+\epsilon.$$
By (ii), it be can verified that $\textrm{trace}(RAT)=\textrm{trace}(TS)$. Thus (\ref{16}) yields that
$$|\textrm{trace}(RT)-\textrm{trace}(TS)|<\epsilon.$$
This implies
$$|\textrm{trace}(RT)|\leq \epsilon+(1+\epsilon)\beta.$$
By the arbitrariness of $R$, we get
$$\alpha\leq \epsilon+(1+\epsilon)\beta.$$
Letting $\epsilon\rightarrow 0$, we get $\alpha\leq \beta.$ This means $\alpha=\beta.$

\end{proof}

\begin{theorem}
Suppose that $E^{**}$ has the $MAP$, $X^{*}$ has the $PMAP$ and $X$ is order continuous. Then
$$\widetilde{\mathcal{I}}^{p^{*}}(X,E^{**})=(\Upsilon_{p}^{0}(E,X))^{*}.$$
\end{theorem}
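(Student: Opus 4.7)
The plan is to exhibit an isometric isomorphism
\[
\Phi\colon \widetilde{\mathcal{I}}^{p^{*}}(X,E^{**}) \longrightarrow (\Upsilon_{p}^{0}(E,X))^{*},\qquad \Phi(T)(S) := \textrm{trace}(TS).
\]
By Lemma \ref{2.6}, $TS\colon E \to E^{**}$ is nuclear with $\nu(TS) \le \widetilde{i}^{p^{*}}(T)\|S\|_{\Upsilon_{p}}$; since the MAP of $E^{**}$ passes to $E^{*}$ (by the classical duality result for the AP), the trace of such a nuclear operator $E \to E^{**}$ is unambiguous, so $\Phi$ is well defined and $\|\Phi(T)\| \le \widetilde{i}^{p^{*}}(T)$. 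Injectivity: evaluating $\Phi(T) = 0$ on rank-one operators $u^{*} \otimes x$ forces $\langle Tx,u^{*}\rangle = 0$ for all $x$ and $u^{*}$, hence $T = 0$.

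For surjectivity, given $\varphi \in (\Upsilon_{p}^{0}(E,X))^{*}$ I define $T\colon X \to E^{**}$ by $\langle Tx,u^{*}\rangle := \varphi(u^{*} \otimes x)$. The inequality $\|u^{*} \otimes x\|_{\Upsilon_{p}} \le \|u^{*}\|\|x\|$ gives $\|T\| \le \|\varphi\|$, and a direct computation yields $\textrm{trace}(TS) = \varphi(S)$ for every $S \in \mathcal{F}(E,X)$. To promote $T$ to an element of $\widetilde{\mathcal{I}}^{p^{*}}$ with $\widetilde{i}^{p^{*}}(T) \le \|\varphi\|$, I invoke Theorem \ref{1.30} (permitted because $X^{*}$ has the PMAP) and reduce to $\widetilde{\nu}^{p^{*}}(Q_{L}T) \le \|\varphi\|$ for every $L \in COFIN(E^{**})$. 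For such $L$, the finite-rank operator $Q_{L}T\colon X \to E^{**}/L$ has finite-dimensional (hence MAP) range, and Lemma \ref{2.8}(a) gives
\[
\widetilde{\nu}^{p^{*}}(Q_{L}T) = \sup\bigl\{|\textrm{trace}(RQ_{L}T)| : R \in \mathcal{F}(E^{**}/L,X),\ \|R\|_{\Upsilon_{p}} \le 1\bigr\}.
\]

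For a fixed $R$ in this supremum, I write $RQ_{L} = \sum_{i=1}^{n} v_{i}^{***} \otimes x_{i}$ with $v_{i}^{***} \in E^{***}$ and $x_{i} \in X$, so $\textrm{trace}(RQ_{L}T) = \sum_{i} \langle v_{i}^{***},Tx_{i}\rangle$, and then apply the classical principle of local reflexivity to the Banach space $E^{*}$ (whose bidual is $E^{***}$) with $M' := \textrm{span}\{v_{i}^{***}\} \subset E^{***}$ and $N' := \textrm{span}\{Tx_{i}\} \subset E^{**}$: for $\epsilon > 0$ there is a linear map $B\colon M' \to E^{*}$ with $\|B\|,\|B^{-1}\| \le 1+\epsilon$ and $\langle J_{E^{*}}(Bv_{i}^{***}),Tx_{j}\rangle = \langle v_{i}^{***},Tx_{j}\rangle$ for all $i,j$. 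Setting $\widetilde{u}_{i}^{*} := Bv_{i}^{***}$ and $\widetilde{S} := \sum_{i} \widetilde{u}_{i}^{*} \otimes x_{i} \in \mathcal{F}(E,X)$, the reflexivity identity rewrites $\textrm{trace}(RQ_{L}T) = \sum_{i} \langle Tx_{i},\widetilde{u}_{i}^{*}\rangle = \varphi(\widetilde{S})$; since $(RQ_{L})^{*}$ ranges in $M'$, one has $\widetilde{S}^{*} = B \circ (RQ_{L})^{*}$ on $X^{*}$, and Theorem \ref{1.6} together with the left-ideal property of $\Lambda_{p}$ gives
\[
\|\widetilde{S}\|_{\Upsilon_{p}} = \|\widetilde{S}^{*}\|_{\Lambda_{p}} \le \|B\|\,\|(RQ_{L})^{*}\|_{\Lambda_{p}} \le (1+\epsilon)\|RQ_{L}\|_{\Upsilon_{p}} \le 1+\epsilon.
\]
Therefore $|\textrm{trace}(RQ_{L}T)| = |\varphi(\widetilde{S})| \le (1+\epsilon)\|\varphi\|$; letting $\epsilon \to 0$ and taking suprema over $R$ and $L$ completes the bound $\widetilde{i}^{p^{*}}(T) \le \|\varphi\|$, and the identity $\textrm{trace}(TS) = \varphi(S)$ extends from $\mathcal{F}(E,X)$ to $\Upsilon_{p}^{0}(E,X)$ by continuity.

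The main obstacle is precisely the trace identification in the last paragraph: $\textrm{trace}(RQ_{L}T) = \sum \langle v_{i}^{***}, Tx_{i}\rangle$ is a pairing in $E^{***} \times E^{**}$, whereas $\varphi$ naturally sees only pairings in $E^{**} \times E^{*}$, and the discrepancy $v_{i}^{***} - J_{E^{*}}(J_{E}^{*} v_{i}^{***}) \in \ker J_{E}^{*}$ does not vanish on $Tx_{i}$ in general. The local reflexivity argument above is the mechanism that linearizes this obstruction on the finite-dimensional subspaces $M', N'$; the hypotheses that $E^{**}$ has the MAP and $X^{*}$ has the PMAP govern the preceding reductions (via Theorem \ref{1.30} and Lemma \ref{2.8}) and the nuclearity machinery of Lemma \ref{2.6}, while the order continuity of $X$ is expected to enter in guaranteeing that the approximating $\widetilde{S}$ lies in the closure $\Upsilon_{p}^{0}(E,X)$ with the required $\|\cdot\|_{\Upsilon_{p}}$ control and that the duality $\|\cdot\|_{\Upsilon_{p}} = \|(\cdot)^{*}\|_{\Lambda_{p}}$ transfers compatibly across all the steps.
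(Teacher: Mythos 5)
Your argument is correct, but the surjectivity step takes a genuinely different route from the paper's. The paper verifies that the operator $T$ built from $\varphi$ is positively $p^{*}$-integral via Theorem \ref{3.2}: it bounds $\widetilde{\nu}^{p^{*}}(Q_{L}Ti_{G})$ for $G\in LDim(X)$ and $L\in COFIN(E^{**})$, passing from the $PMAP$ of $X^{*}$ to the $PMAP$ of $X$, invoking Nielsen's extension theorem to get $D\in\mathcal{F}_{+}(X)$ with $D|_{G}=I_{G}$, and then using Lemma \ref{2.8}(b) so that the relevant traces become $\textrm{trace}(TDV)=\langle\varphi,DV\rangle$ with $DV\in\mathcal{F}(E,X)$; this is exactly where the order completeness behind Theorem \ref{3.2} (hence the order continuity hypothesis) and the sublattice machinery enter. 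You instead use only the $COFIN$ criterion of Theorem \ref{1.30} (its direction (ii)$\Rightarrow$(i) is the easy one), compute $\widetilde{\nu}^{p^{*}}(Q_{L}T)$ by Lemma \ref{2.8}(a) against $R\in\mathcal{F}(E^{**}/L,X)$ with $\|R\|_{\Upsilon_{p}}\le 1$, and then apply the classical principle of local reflexivity to $E^{*}$ to pull the $E^{***}$-components of $RQ_{L}$ down to $E^{*}$, controlling $\|\widetilde{S}\|_{\Upsilon_{p}}\le 1+\epsilon$ through the identity $\widetilde{S}^{*}=B\circ(RQ_{L})^{*}$ and Theorem \ref{1.6}; this converts $\textrm{trace}(RQ_{L}T)$ into $\varphi(\widetilde{S})$ with $\widetilde{S}\in\mathcal{F}(E,X)$, and all the norm estimates you use check against the stated ideal properties. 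Two remarks on the comparison: the paper's route keeps the proof inside the positive finite-rank/sublattice framework and displays the $LDim\times COFIN$ maximal-hull description at work, while your route is shorter and, contrary to your closing speculation, never uses the order continuity of $X$ at all (the order completeness needed inside Lemma \ref{2.8}(a) concerns $X^{**}$, which is automatic), so as written it would give the theorem without that hypothesis; you should therefore delete the final sentence guessing where order continuity enters, and state explicitly that the trace of nuclear operators $E\to E^{**}$ is unambiguous because $E^{**}$ has the $AP$, which is what makes $\Phi$ well defined and justifies the final extension of $\textrm{trace}(TS)=\varphi(S)$ from $\mathcal{F}(E,X)$ to $\Upsilon_{p}^{0}(E,X)$ by continuity.
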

\begin{proof}
We define an operator $$U: \widetilde{\mathcal{I}}^{p^{*}}(X,E^{**})\rightarrow (\Upsilon_{p}^{0}(E,X))^{*}$$ by
$$T\mapsto U_{T}(S)=\textrm{trace}(TS), \quad T\in \widetilde{\mathcal{I}}^{p^{*}}(X,E^{**}), S\in \Upsilon_{p}^{0}(E,X).$$
By Lemma \ref{2.6}, we get $\|U_{T}\|\leq\widetilde{i}^{p^{*}}(T).$

Let $\varphi\in (\Upsilon_{p}^{0}(E,X))^{*}$. We define an operator $T:X\rightarrow E^{**}$ by $\langle Tx, u^{*}\rangle=\langle\varphi, u^{*}\otimes x\rangle$ for $x\in X, u^{*}\in E^{*}$. Obviously, $\|T\|\leq \|\varphi\|$ and $\langle \varphi,S\rangle=\textrm{trace}(TS)$ for all $S\in \mathcal{F}(E,X)$.

Claim. $T$ is positively $p^{*}$-integral.

Let $G\in LDim(X)$ and $L\in COFIN(E^{**})$. Let $\epsilon>0$. Since $X^{*}$ has the $PMAP$, $X$ also has the $PMAP$. By \cite[Theorem 2.7]{N}, there exists an operator $D\in \mathcal{F}_{+}(X)$ with $\|D\|\leq 1+\epsilon$ such that $D|_{G}=I_{G}$. By Lemma \ref{2.8}(b), we get
\begin{align*}
\widetilde{\nu}^{p^{*}}(Q_{L}Ti_{G})&=\widetilde{\nu}^{p^{*}}(Q_{L}TDi_{G})\\
&\leq\widetilde{\nu}^{p^{*}}(TD)\\
&=\sup\{|\textrm{trace}(TDV)|:V\in \mathcal{F}(E,X),\|V\|_{\Upsilon_{p^{*}}}\leq 1\}\\
&=\sup\{|\langle \varphi,DV\rangle|:V\in \mathcal{F}(E,X),\|V\|_{\Upsilon_{p^{*}}}\leq 1\}\\
&\leq \|\varphi\|\|D\|\\
&\leq (1+\epsilon)\|\varphi\|.
\end{align*}
Letting $\epsilon\rightarrow 0$, we get $\widetilde{\nu}^{p^{*}}(Q_{L}Ti_{G})\leq \|\varphi\|.$
It follows from Theorem \ref{3.2} that $T$ is positively $p^{*}$-integral and $\widetilde{i}^{p^{*}}(T)\leq \|\varphi\|.$

Finally, by the definition of $\Upsilon_{p}^{0}(E,X)$, we see that $\varphi=U_{T}$. Therefore the mapping $U$ is a surjective linear isometry.

\end{proof}

\end{document}